\newcommand{\SORTNOOPCYR}[1]{} 
\newcommand{\SortNoop}[1]{}
\theoremstyle{change}
\newtheorem{theorem}{Theorem}[section]
\newtheorem{lemma}[theorem]{Lemma}
\newtheorem{corollary}[theorem]{Corollary}
\theoremstyle{change}
\newtheorem{obs}[theorem]{\!\!}
\theoremstyle{change}
\newtheorem{definition}[theorem]{Definition}
\newtheorem{remark}[theorem]{Remark}
\newtheorem{example}[theorem]{Example}
\theoremstyle{plain} 
\theoremstyle{margin}
\theoremstyle{definition}
\theoremstyle{change}
\g@addto@macro\th@remark{\thm@headpunct{}}
\renewcommand{\epsilon}{\varepsilon}
\newcommand{\1}{\mathbbm{1}}
\newcommand{\B}{\mathbb{B}}
\newcommand{\R}{\mathbb{R}}
\newcommand{\N}{\mathbb{N}}
\newcommand{\cA}{\mathcal{A}}
\newcommand{\cB}{\mathcal{B}}
\newcommand{\cL}{\mathcal{L}}
\newcommand{\cN}{\mathcal{N}}
\newcommand{\cP}{\mathcal{P}}
\newcommand{\fB}{\mathfrak{B}} 	\newcommand{\fb}{\mathfrak{b}}
\newcommand{\fU}{\mathfrak{U}}\newcommand{\fu}{\mathfrak{u}}
\newcommand{\sumn}{\sum_{n\in \N}}
\newcommand{\limn}{\lim_{n\rightarrow \infty}}
\DeclareMathOperator{\D}{\, \mathrm{d}\!}
\renewcommand{\v}{v} 
\newcommand{\unifrightarrow}{\xrightarrow{u}}
\newcommand{\todolater}{\todo[color=yellow!50]}
\begin{document}

\renewcommand{\thefootnote}{\Roman{footnote}}

\title{Bochner integrals in ordered vector spaces}


\author{
\renewcommand{\thefootnote}{\Roman{footnote}}
A.C.M. van Rooij
\footnotemark[1]
\\
\renewcommand{\thefootnote}{\Roman{footnote}}
W.B. van Zuijlen
\footnotemark[2]
}

\footnotetext[1]{
Radboud University Nijmegen, Department of Mathematics, P.O.\ Box
9010, 6500 GL Nijmegen, the Netherlands.
}
\footnotetext[2]{
Leiden University, Mathematical Institute, P.O.\ Box 9512, 2300 RA, Leiden, the
Netherlands.
}

\maketitle

\renewcommand{\thefootnote}{\arabic{footnote}} 

\begin{abstract}
We present a natural way to cover an Archimedean directed ordered vector space $E$ by Banach spaces and extend the notion of Bochner integrability to functions with values in $E$. 
The resulting set of integrable functions is an Archimedean directed ordered vector space and the integral is an order preserving map. 

\todo{dingen weggelaten}

\bigskip\noindent
{\it Mathematics Subject Classification (2010).} 28B05, 28B15. \\
{\it Keywords:} Bochner integral, ordered vector space, ordered Banach space, closed cone, generating cone.
\end{abstract}


\section{Introduction}

We extend the notion of Bochner integrability to functions with values in a 
vector space $E$ that may not itself be a Banach space but is the union of a collection $\B$ of Banach spaces. 

The idea is the following. 
We call a function $f$, defined on a measure space $X$ and with values in $E$, ``integrable'' if for some $D$ in $\B$ all values of $f$ lie in $D$ and $f$ is Bochner integrable as a function $X \rightarrow D$. 
Of course, one wants a certain consistency: the ``integral'' of such an $f$ should be independent of the choice of $D$. 

In \cite{Th75}, Thomas 
obtains this consistency by assuming a Hausdorff locally convex topology on $E$, 
entailing many continuous linear functions $E \rightarrow \R$. 
Their restrictions to the Banach spaces that constitute $\B$ enable one to apply Pettis integration, which leads to the desired uniqueness. 

Our approach is different, following a direct-limit-like construction. 
We assume $E$ to be an ordered vector space with some simple regularity properties (Archimedean, directed) and show that $E$ is the union of a certain increasing system $\B$ of Banach spaces with closed, generating positive cones (under the ordering of $E$). 
Uniqueness of the integral follows from properties of such ordered Banach spaces. 
Moreover, the integrable functions form a vector space and the integral is linear and order preserving. 

In Section \ref{section:useful_banach_spaces} we study ordered Banach spaces with closed generating cones. We give certain properties which can be used to give an alternative proof of a classical theorem which states that every order preserving linear map is continuous, and generalise it to order bounded linear maps. 
In Section \ref{section:bochner_integral_on_ordered_Banach_spaces} we study Bochner integrable functions with values in an ordered Banach space with closed generating cone. 
In Section \ref{section:generalised_Bochner} we present the definition of a Banach cover and the definition of the extension of the Bochner integral to functions with values in a vector space that admits a Banach cover. 
In Section \ref{section:useful_covers} we show that an Archimedean ordered vector space possesses a Banach cover consisting of ordered Banach spaces spaces with closed generating cones. 
In Section \ref{section:Bochner_on_covers2} we study integrable functions with values in Archimedean ordered vector spaces. 
In Section \ref{section:comparing_integrals} we compare the integral with integrals considered in \cite{vRvZ}. 
In Section \ref{section:convolution} we present an application to view the convolution as an integral.

\section{Notation}

$\N$ is $\{1,2,\dots\}$. 
We write ``for all $n$'' instead of ``for all $n\in\N$''. 
To avoid confusion:
\begin{itemize}
\item An ``order'' is a ``partial order''. 
\item 
We call an ordered vector space \emph{Archimedean} (see Peressini \cite{Pe67}) if for all $a,b\in E$ the following holds: if $na \le b$ for all $n\in\N$, then $a\le 0$. 
(In some places, e.g., Birkhoff \cite{Bi67}, such spaces are said to be `integrally closed'.)
\end{itemize}
As is common in literature, 
our notations do not 
distinguish between a function on a measure space and the class of that function.

\section{Ordered Banach spaces with closed generating cones}
\label{section:useful_banach_spaces}

In this section we describe properties of ordered Banach spaces with closed generating cones. Using these properties we prove in Theorem \ref{theorem:order_bounded_linear_between_useful_implies_continuous} that an order bounded map between ordered Banach spaces with closed generating cones is continuous. 

\begin{definition}
A \emph{normed ordered vector space} is a normed vector space with an order that makes it an ordered vector space. 
An \emph{ordered Banach space} is a Banach space that is a normed ordered vector space. 
\end{definition}

A priori there is no connection between the ordering and the norm of a normed ordered vector space. One reasonable and useful connection is the assumption that the (positive) cone be closed. 

\begin{theorem}
\label{theorem:ordering_determined_by_dual_cone}
Let $E$ be a normed ordered vector space. 
$E^+$ is closed if and only if 
\begin{align}
\label{eqn:positive_of_dual_determines_ordering}
x \le y  \iff \alpha(x) \le \alpha(y) \mbox{ for all } \alpha \in (E')^+. 
\end{align}
Consequently, whenever $E^+$ is closed then $(E')^+$ separates the points of $E$ and $E$ is Archimedean. 
\end{theorem}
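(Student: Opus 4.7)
The plan is to recast (\ref{eqn:positive_of_dual_determines_ordering}) as a statement about the positive cone. Since the order is translation invariant, (\ref{eqn:positive_of_dual_determines_ordering}) is equivalent to
\[
E^+ = \{x\in E : \alpha(x) \ge 0 \text{ for all } \alpha \in (E')^+\}.
\]
With this reformulation, the direction $(\Leftarrow)$ of the main equivalence becomes automatic: the set on the right is an intersection of closed half-spaces, hence closed, so if (\ref{eqn:positive_of_dual_determines_ordering}) holds, then $E^+$ is closed.

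For the direction $(\Rightarrow)$, I would assume $E^+$ is closed and treat the two sides of (\ref{eqn:positive_of_dual_determines_ordering}) separately. The ``$\Rightarrow$'' implication in (\ref{eqn:positive_of_dual_determines_ordering}) is immediate from the definition of the dual cone $(E')^+$. For the ``$\Leftarrow$'' implication I would argue contrapositively: if $y - x \notin E^+$, then since $E^+$ is a closed convex set, Hahn--Banach separation in the normed (hence locally convex) space $E$ produces $\alpha \in E'$ and $c \in \R$ with $\alpha(y-x) < c \le \alpha(z)$ for every $z \in E^+$. Taking $z = 0$ gives $c \le 0$, whence $\alpha(y-x) < 0$. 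Furthermore, the cone structure forces $\alpha \ge 0$ on $E^+$: otherwise there would be $z_0 \in E^+$ with $\alpha(z_0) < 0$, and then $\alpha(tz_0) = t\alpha(z_0) \to -\infty$ as $t \to \infty$, contradicting the lower bound $c$. So $\alpha \in (E')^+$ exhibits $\alpha(x) > \alpha(y)$, contradicting the assumption of the ``$\Leftarrow$'' side.

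Both consequences follow quickly from (\ref{eqn:positive_of_dual_determines_ordering}). For point separation, if $x \ne y$ but $\alpha(x) = \alpha(y)$ for every $\alpha \in (E')^+$, then (\ref{eqn:positive_of_dual_determines_ordering}) delivers simultaneously $x \le y$ and $y \le x$, so $x = y$, a contradiction. For the Archimedean property, if $na \le b$ for every $n$, then for each $\alpha \in (E')^+$ and each $n$ one has $\alpha(a) \le \alpha(b)/n$, so $\alpha(a) \le 0$ for every $\alpha \in (E')^+$; a single application of (\ref{eqn:positive_of_dual_determines_ordering}) gives $a \le 0$.

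The only subtle step is arranging the separating functional in the $(\Rightarrow)$ direction to sit inside $(E')^+$; this is not deep, but it is the one place where the geometric (Hahn--Banach) and algebraic (cone) inputs must be combined, and it is where the hypothesis that $E^+$ is a \emph{cone}, and not merely a closed convex set, is essential.
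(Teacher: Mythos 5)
Your proof is correct and follows essentially the same route as the paper: the paper obtains the key implication by citing the fact that a convex set is closed iff weakly closed together with \cite[Theorem 2.13]{AlTo07}, and your Hahn--Banach separation argument (producing a functional nonnegative on the cone and strictly negative at $y-x$) is precisely the content of those cited results, just written out in full. The reformulation of \eqref{eqn:positive_of_dual_determines_ordering} as $E^+=\{x: \alpha(x)\ge 0 \text{ for all } \alpha\in (E')^+\}$ and the derivation of the two consequences are also sound.
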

\begin{proof}
Since $E^+$ is convex, 
$E^+$ is closed if and only if it is weakly closed (i.e., $\sigma(E,E')$-closed); see \cite[Theorem V.1.4]{Co07}. 
The rest follows by \cite[Theorem 2.13(3\&4)]{AlTo07}.
\end{proof}

The following theorem is due to And\^o \cite{An62}. See also \cite[Corollary 2.12]{AlTo07}. 

\begin{theorem}
\label{theorem:ando}
Let $D$ be an ordered Banach space with a closed generating\footnote{$D^+$ is generating if $D=D^+ - D^+$.} cone $D^+$. 
There exists a $C>0$ such that 
\begin{align}
\label{eqn:C_bound_norm}
C\|x\| &\ge \inf \{ \|a\| : a\in D^+, - a \le x \le a\} \qquad (x\in E). 
\end{align}
\end{theorem}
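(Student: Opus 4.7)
The approach is to introduce an auxiliary norm on $D$ that dominates $\|\cdot\|$ and then apply the Open Mapping Theorem. Set
\[
\|x\|_1 := \inf\{\|a\| + \|b\| : a,b \in D^+, \ x = a - b\} \qquad (x \in D).
\]
Since $D^+$ is generating, this is finite on $D$; routine checks (using $D^+ + D^+ \subseteq D^+$) give that $\|\cdot\|_1$ is a seminorm, and the inequality $\|x\| \leq \|a\| + \|b\|$ valid for any decomposition yields $\|x\| \leq \|x\|_1$, which in particular makes $\|\cdot\|_1$ a norm.

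The key step is to show that $(D, \|\cdot\|_1)$ is complete. Given a $\|\cdot\|_1$-Cauchy sequence, extract a subsequence $(y_n)$ with $\|y_{n+1} - y_n\|_1 < 2^{-n}$ and write $y_{n+1} - y_n = a_n - b_n$ with $a_n, b_n \in D^+$ and $\|a_n\| + \|b_n\| < 2^{-n}$. The series $\sum_n a_n$ and $\sum_n b_n$ converge absolutely in $(D, \|\cdot\|)$, so their sums $A$ and $B$ exist in $D$; since $D^+$ is closed and stable under finite sums, $A, B \in D^+$, and likewise the tails $A_n := \sum_{k \geq n} a_k$ and $B_n := \sum_{k \geq n} b_k$ lie in $D^+$ with $\|A_n\| + \|B_n\| \leq 2^{1-n}$. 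Hence $y_n \to y_1 + A - B$ in $\|\cdot\|_1$, and the full Cauchy sequence converges to the same limit.

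Because $\|\cdot\| \leq \|\cdot\|_1$ and both norms make $D$ a Banach space, the identity map $(D, \|\cdot\|_1) \to (D, \|\cdot\|)$ is a continuous linear bijection; the Open Mapping Theorem produces a constant $C > 0$ with $\|x\|_1 \leq C\|x\|$ for all $x \in D$. Finally, every decomposition $x = a - b$ with $a, b \in D^+$ yields $c := a + b \in D^+$ satisfying $-c \leq x \leq c$ and $\|c\| \leq \|a\| + \|b\|$, so the infimum in \eqref{eqn:C_bound_norm} is bounded by $\|x\|_1$ and hence by $C\|x\|$.

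The only genuine obstacle is the completeness of $(D, \|\cdot\|_1)$, which is precisely where the closedness of $D^+$ is used (to keep the sums $A$, $B$, $A_n$, $B_n$ in the cone); the rest is bookkeeping.
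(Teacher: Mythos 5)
Your proof is correct. Note that the paper does not prove this statement at all: it is quoted as And\^o's theorem, with references to And\^o's original article and to Aliprantis--Tourky \cite[Corollary 2.12]{AlTo07}. Your argument --- introducing the decomposition norm $\|x\|_1=\inf\{\|a\|+\|b\| : a,b\in D^+,\ x=a-b\}$, proving its completeness via absolutely convergent series and the closedness of $D^+$, invoking the Open Mapping Theorem to get $\|\cdot\|_1\le C\|\cdot\|$, and passing from a decomposition $x=a-b$ to the majorant $c=a+b$ to bound the infimum in \eqref{eqn:C_bound_norm} --- is exactly the classical route those references take, so you have in effect supplied the proof the paper outsources to the literature.
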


\begin{definition}
\label{def:norm_C-absolutely_dominating}
Let $D$ be a directed\footnote{$D$ is directed if $D=D^+- D^+$, i.e., if $D^+$ is generating.}
 ordered Banach space. If $C>0$ is such that \eqref{eqn:C_bound_norm} holds, then we say that the norm $\|\cdot\|$ is 
\emph{C-absolutely dominating}.\footnote{Batty and Robinson \cite{BaRo84} call the cone $D^+$ \emph{approximately $C$-absolutely dominating}, and, Messerschmidt \cite{Me15} calls $D$ \emph{approximately $C$-absolutely conormal}, if the norm on $D$ is $C$-absolutely dominating.} 
We say that a norm $\|\cdot\|$ is \emph{absolutely dominating} 
if it is $C$-absolutely dominating for some $C>0$. 

On a Banach lattice the norm is $1$-absolutely dominating. 
Actually for Banach lattices there is equality in \eqref{eqn:C_bound_norm}. 
\end{definition}

\begin{obs}
\label{obs:facts_about_absolutely_dominating}
We refer the reader to Appendix \ref{section:appendix_useful} for the following facts:
If $\|\cdot\|$ is $C$-absolutely dominating on a directed ordered Banach space $D$, then $C\ge 1$. 
Whenever there exists a absolutely dominating norm, then for all $\epsilon>0$ there exists an equivalent $(1+\epsilon)$-absolutely dominating norm. 
All norms on a directed ordered vector space $D$ that make $D$ complete and $D^+$ closed are equivalent (see \ref{obs:norms_on_ordered_Banach_space_with_closed_generating_cone_are_equivalent}). 
\end{obs}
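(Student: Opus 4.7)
For the first assertion I would exploit positive functionals. Since $D^+$ is closed, Theorem \ref{theorem:ordering_determined_by_dual_cone} ensures that $(D')^+$ separates points, so after normalisation we obtain $\phi\in(D')^+$ with $\|\phi\|=1$. Choose $x\in D$ with $\|x\|\le 1$ and $\phi(x)>1-\epsilon$. For any $a\in D^+$ with $-a\le x\le a$, positivity yields $\phi(a)\ge\phi(x)>1-\epsilon$ while $\phi(a)\le\|\phi\|\,\|a\|=\|a\|$, so $\|a\|>1-\epsilon$. Passing to the infimum, \eqref{eqn:C_bound_norm} forces $C\ge C\|x\|\ge 1-\epsilon$, and sending $\epsilon\downarrow 0$ gives $C\ge 1$.

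For the second assertion I would introduce the seminorm
\[
|||x|||:=\inf\{\|a\|:a\in D^+,\,-a\le x\le a\},
\]
well-defined since $D^+$ is generating, and a seminorm by scaling and summing witnesses; the hypothesis reads $|||x|||\le C\|x\|$. The key observation is that for every $a\in D^+$ the element $a$ itself is a valid competitor in the definition of $|||a|||$ (since $-a\le a$ when $a\ge 0$), whence $|||a|||\le\|a\|$. Setting $\|x\|':=\max(\|x\|,\,|||x|||)$ produces a norm equivalent to $\|\cdot\|$ via
\[
\|x\|\le\|x\|'\le\max(\|x\|,C\|x\|)=C\|x\|
\]
(using $C\ge 1$ from the first assertion), so $(D,\|\cdot\|')$ remains a Banach space with closed $D^+$.

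It then remains to verify that $\|\cdot\|'$ is $1$-absolutely dominating, which is even stronger than the requested $(1+\epsilon)$. For $a\in D^+$ the key observation gives $\|a\|'=\|a\|$, so $\inf\{\|a\|':a\in D^+,\,-a\le x\le a\}=|||x|||\le\|x\|'$, as required. The main conceptual obstacle I expect is resisting the temptation to use $|||\cdot|||$ itself as the new norm: doing so would require proving it equivalent to $\|\cdot\|$, which is delicate absent any a priori semi-monotonicity of $\|\cdot\|$ on $D^+$. The $\max$ construction neatly sidesteps this by keeping $\|x\|\le\|x\|'$ automatic while still driving the absolute dominance constant down to~$1$.
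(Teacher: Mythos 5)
Your arguments for the first two facts are correct, but the observation lists three facts and you have not addressed the third one at all: that all norms on $D$ which make $D$ complete and $D^+$ closed are equivalent. The paper obtains this in \ref{obs:norms_on_ordered_Banach_space_with_closed_generating_cone_are_equivalent} by a closed--graph argument resting on Lemma \ref{lemma:same_convergence_on_intersections}: if a sequence (with summable norms after passing to a subsequence) converges to $a$ in one such norm and to $b$ in another, then by Theorem \ref{theorem:equivalences_useful} it converges uniformly to both $a$ and $b$, and Archimedeanness forces $a=b$, so the identity map has closed graph. As a proof of the whole observation your proposal is therefore incomplete; the dual-functional and renorming arguments do not by themselves yield this equivalence statement.

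On the two facts you do treat, your routes differ from the paper's and both are sound. For $C\ge 1$ the paper argues intrinsically (\ref{obs:absolutely_dominating_constant_larger_than_one}): assuming $C<1$, iterating \eqref{eqn:C_bound_norm} produces an increasing sequence $a\le a_1\le a_2\le\cdots$ in $D^+$ with summable norms, whence $a_n\unifrightarrow 0$ by Theorem \ref{theorem:equivalences_useful}, contradicting $0<a\le a_n$. Your argument instead uses a norm-one positive functional, available from Theorem \ref{theorem:ordering_determined_by_dual_cone}; note that both proofs use closedness of $D^+$, which is a standing hypothesis of the appendix even though the observation's wording only says ``directed ordered Banach space''. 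For the renorming, the paper (Theorem \ref{theorem:also_almost_1_abs_dom_norm}) perturbs additively, $\rho=\epsilon\cN+\|\cdot\|$ with $\cN$ as in \ref{obs:norm_to_be_equal_useful}, and gets constant $(1+\epsilon)^2$; your norm $\max(\|\cdot\|,\cN)$ is equivalent to $\|\cdot\|$ because $\cN\le C\|\cdot\|$, and since $\cN(a)\le\|a\|$ for $a\in D^+$ the positive elements keep their old norm, so the new norm is in fact $1$-absolutely dominating. I see no flaw in that computation, and it also neatly avoids having to show that $\cN$ is definite (which the paper does need, via closedness of $D^+$). So your version of the second fact is both simpler and stronger than the paper's; the only genuine defect of the proposal is the missing third fact.
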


\begin{obs}
\label{obs:union_of_intervals_is_nbh_0_iff_C_abs_dom_norm}
Let $D$ be a directed ordered Banach space. 
Then $\|\cdot\|$ is absolutely dominating 
 if and only if 
the (convex) set 
\begin{align}
\bigcup_{a\in D^+, \|a\|\le 1} [-a,a]
\end{align}
 is a neighbourhood of $0$.  
\end{obs}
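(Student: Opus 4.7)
The plan is a pair of short scaling arguments, one for each direction; neither is genuinely deep. Write $U := \bigcup_{a \in D^+,\, \|a\| \le 1} [-a,a]$ for brevity.

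For the forward direction, assume $\|\cdot\|$ is $C$-absolutely dominating and fix $x$ with $\|x\| < 1/C$. Pick $\epsilon > 0$ small enough that $C\|x\| + \epsilon \le 1$. By definition of the infimum in \eqref{eqn:C_bound_norm}, there exists $a \in D^+$ with $-a \le x \le a$ and $\|a\| \le C\|x\| + \epsilon \le 1$, so $x \in U$. Hence the open ball of radius $1/C$ around $0$ is contained in $U$, and $U$ is a neighbourhood of $0$.

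For the converse, suppose $U$ is a neighbourhood of $0$ and choose $r > 0$ with $\{x : \|x\| \le r\} \subseteq U$. For any $x \ne 0$, apply this to $y := (r/\|x\|)\, x$ (which has norm $r$) to obtain $a \in D^+$ with $\|a\| \le 1$ and $-a \le y \le a$. Setting $b := (\|x\|/r)\, a$ then gives $b \in D^+$, $-b \le x \le b$, and $\|b\| \le \|x\|/r$. Consequently $\inf\{\|a\| : a \in D^+,\, -a \le x \le a\} \le (1/r)\|x\|$, showing that $\|\cdot\|$ is $(1/r)$-absolutely dominating (with the case $x = 0$ handled trivially by $a = 0$).

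I don't expect a real obstacle here; the only care required is to use a strict inequality together with an $\epsilon$-margin to pass from the infimum in \eqref{eqn:C_bound_norm} to an actual witness $a$, and to check positive homogeneity cleanly when rescaling between $x$ and $y$. Convexity of $U$, which is parenthetically noted in the statement, is an independent easy observation ($t(-a) + (1-t)(-b) \le tx + (1-t)y \le ta + (1-t)b$ and $\|ta + (1-t)b\| \le 1$) that plays no role in the equivalence itself.
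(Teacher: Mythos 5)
Your argument is correct: the $\epsilon$-margin in the forward direction and the rescaling $x \mapsto (r/\|x\|)x$ in the converse are exactly the standard justification, and the paper in fact states this observation without giving any proof, so there is nothing to diverge from. Both directions, as well as your remark on convexity, check out.
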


\begin{definition}(See \cite[\S 16]{LuZa71})
Let $E$ be an ordered vector space. 
We say that a sequence $(x_n)_{n\in\N}$ in $E$ 
\emph{converges uniformly}
to an element $x\in E$ (notation: $x_n \unifrightarrow x$)
whenever there exist $a\in E^+$, $\epsilon_n \in (0,\infty)$ with $\epsilon_n\rightarrow 0$ and 
\begin{align}
-\epsilon_n a \le x_n -x \le \epsilon_n a \qquad (n\in\N). 
\end{align}
Note that one may replace ``$\epsilon_n \rightarrow 0$'' by ``$\epsilon_n \downarrow 0$''. 
If $E$ is Archimedean then the $x$ as above is unique. 
We will only consider such convergence in Archimedean spaces. 
We say that a sequence $(x_n)_{n\in\N}$ in $E$ is a
\emph{uniformly Cauchy sequence} if there exists an $a\in E^+$ such that for all $\epsilon>0$ there exists an $N$ such that $-\epsilon a \le x_n - x_m \le \epsilon a$ for all $n,m\ge N$. 
$E$ is called \emph{uniformly complete} whenever it is Archimedean and all  uniformly Cauchy sequences converge uniformly. 
\end{definition}

\begin{lemma}
\label{lemma:order_bounded_linear_implies_continuous_wrt_ru}
Let $D_1,D_2$ be ordered vector spaces and $T: D_1 \rightarrow D_2$. 
If $T$ is linear and order bounded, then $T$ preserves uniform convergence.
\end{lemma}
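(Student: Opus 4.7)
The plan is to unpack the definition of uniform convergence and exploit the linearity of $T$ to reduce the problem to a single order-boundedness estimate. Suppose $x_n \unifrightarrow x$ in $D_1$; choose $a \in D_1^+$ and positive scalars $\epsilon_n \downarrow 0$ with $-\epsilon_n a \le x_n - x \le \epsilon_n a$ for all $n$. Since $T$ is linear, $T(x_n) - T(x) = T(x_n - x)$, so it suffices to produce a \emph{single} $b \in D_2^+$ with $-\epsilon_n b \le T(x_n - x) \le \epsilon_n b$ for every $n$.

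The crux is the following observation. Because $T$ is order bounded, the image of the order interval $[-a,a]$ is contained in some order interval $[c,d]$ of $D_2$. Now $[-a,a]$ is symmetric about $0$: whenever $y \in [-a,a]$, so is $-y$. Linearity of $T$ therefore gives $T(y) \le d$ and $-T(y) = T(-y) \le d$, whence $-d \le T(y) \le d$ for every $y \in [-a,a]$. Moreover $T(0)=0 \in [c,d]$ forces $0 \le d$, so $b := d \in D_2^+$ is a legitimate positive element, and $T([-a,a]) \subseteq [-b,b]$.

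To conclude, fix $n$ and note $x_n - x \in [-\epsilon_n a, \epsilon_n a]$. If $\epsilon_n > 0$, then $\epsilon_n^{-1}(x_n - x) \in [-a,a]$, so by the previous step $T(\epsilon_n^{-1}(x_n - x)) \in [-b,b]$, and multiplication by $\epsilon_n$ yields $T(x_n - x) \in [-\epsilon_n b, \epsilon_n b]$; the case $\epsilon_n = 0$ is trivial since then $x_n = x$. Because $\epsilon_n \to 0$, this is precisely the statement $T(x_n) \unifrightarrow T(x)$.

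The only genuine obstacle is the symmetrization argument in the second paragraph: a priori $[c,d]$ need not be symmetric around $0$, and because $T$ is only order bounded and not order preserving, we cannot simply apply $T$ to the inequality $-\epsilon_n a \le x_n - x \le \epsilon_n a$ term-wise. Using the identity $-[-a,a] = [-a,a]$ together with linearity of $T$ is what converts the asymmetric bound $[c,d]$ into a symmetric one $[-b,b]$, and it is this single $b$ which then works uniformly in $n$.
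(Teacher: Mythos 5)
Your argument is correct and follows essentially the same route as the paper's proof: reduce to $x_n - x$ by linearity, use order boundedness to get a single $b\in D_2^+$ with $T([-a,a])\subseteq[-b,b]$, and scale by $\epsilon_n^{-1}$. The only difference is that you spell out the symmetrization step (passing from $T([-a,a])\subseteq[c,d]$ to a symmetric interval $[-b,b]$ via $-[-a,a]=[-a,a]$ and $T(0)=0$), which the paper takes for granted when it simply chooses such a $b$; this is a worthwhile detail since $D_2$ is not assumed directed.
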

\begin{proof}
Suppose $x_n\in D_1, \epsilon_n \in (0,1)$ and $a\in D_1^+$ are such that $-\epsilon_n a \le x_n \le \epsilon_n a$ and $\epsilon_n \downarrow 0$. 
Let $b\in D_2^+$ be such that $T[-  a , a] \subset [-b,b]$.
Then $\epsilon_n^{-1}Tx_n \in [-b,b]$, i.e., $-\epsilon_n b \le Tx_n \le \epsilon_n b$ for all $n$. 
\end{proof}


\begin{theorem}
\label{theorem:equivalences_useful}
Let $D$ be an ordered Banach space. 
Consider the following conditions. 
\begin{enumerate}[label=\normalfont{(\roman*)}]
\item
\label{item:C_norm_bound}
$\|\cdot\|$ is absolutely dominating. 
\item If $a_1,a_2,\dots \in D^+$, $\sumn \|a_n\| <\infty$, then there exist $a\in D^+$, $\epsilon_n \in (0,\infty)$ with $\epsilon_n \rightarrow 0$ such that $a_n \le \epsilon_n a$ for all $n$.\footnote{For normed Riesz spaces property \ref{item:condition_bounded} is equivalent to what is called the weak Riesz-Fischer condition (see Zaanen \cite[Ch. 14, \S 101]{Za83}).}
\label{item:condition_bounded}
\item If 
$x_1,x_2,\dots \in D, \sumn \|x_n\|<\infty $, then $x_n \unifrightarrow 0. $
\label{item:condition_ru_convergence_summable_seq}
\end{enumerate}
If $D^+$ is closed, then $D$ satisfies \ref{item:condition_bounded}. \\
Suppose $D$ is Archimedean and directed. 
Then the following are equivalent.
\begin{enumerate}
\item $D^+$ is closed. 
\label{item:useful}
\item $D$ satisfies \ref{item:C_norm_bound} and \ref{item:condition_bounded}. 
\label{item:archi_and_positive_ru_and_bound_useful}
\item $D$ satisfies \ref{item:condition_ru_convergence_summable_seq}. 
\label{item:archi_and_ru_condition}
\end{enumerate}
\end{theorem}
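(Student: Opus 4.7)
The plan is to prove the unconditional implication \emph{``$D^+$ closed $\Rightarrow$ \ref{item:condition_bounded}''} first, then use it together with Andô's theorem (Theorem~\ref{theorem:ando}) for (1)$\Rightarrow$(2), a direct sandwich argument for (2)$\Rightarrow$(3), and a subsequence-plus-Archimedean argument for (3)$\Rightarrow$(1).

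For the unconditional implication, given $(a_n)\subset D^+$ with $\sum_n \|a_n\|<\infty$, I would invoke the classical fact that any summable sequence of nonnegative reals admits a multiplier $c_n\to\infty$ keeping it summable (e.g.\ set $t_n:=\sum_{k\ge n}\|a_k\|$ and $c_n:=1/\sqrt{t_n}$ where $t_n>0$). Then $\sum c_n a_n$ converges absolutely in the Banach space $D$ to some $a$; since $D^+$ is closed, each tail $\sum_{n>N} c_n a_n$ lies in $D^+$, hence $a\ge c_k a_k$ for every $k$, so $a_k\le \epsilon_k a$ with $\epsilon_k:=1/c_k\to 0$. This yields \ref{item:condition_bounded}. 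For \ref{item:useful}$\Rightarrow$\ref{item:archi_and_positive_ru_and_bound_useful}: Andô's theorem gives \ref{item:C_norm_bound} (using that $D$ is directed, so $D^+$ is generating), and \ref{item:condition_bounded} is what we just proved. For \ref{item:archi_and_positive_ru_and_bound_useful}$\Rightarrow$\ref{item:archi_and_ru_condition}: given $(x_n)\subset D$ with $\sum\|x_n\|<\infty$, use \ref{item:C_norm_bound} (absolutely dominating, with constant $C$) to pick, for each $n$, some $b_n\in D^+$ with $-b_n\le x_n\le b_n$ and $\|b_n\|\le C\|x_n\|+2^{-n}$. Then $\sum\|b_n\|<\infty$, so \ref{item:condition_bounded} supplies $a\in D^+$ and $\epsilon_n\to 0$ with $b_n\le \epsilon_n a$; the sandwich $-\epsilon_n a\le -b_n\le x_n\le b_n\le \epsilon_n a$ is exactly $x_n\unifrightarrow 0$.

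For \ref{item:archi_and_ru_condition}$\Rightarrow$\ref{item:useful}, suppose $(x_n)\subset D^+$ and $x_n\to x$ in norm. Extract a subsequence $(x_{n_k})$ with $\|x_{n_k}-x\|\le 2^{-k}$, so that $y_k:=x_{n_k}-x$ has $\sum\|y_k\|<\infty$. By \ref{item:condition_ru_convergence_summable_seq} there exist $a\in D^+$ and $\epsilon_k\downarrow 0$ with $-\epsilon_k a\le x_{n_k}-x\le \epsilon_k a$. Then $-\epsilon_k a\le -(x-x_{n_k})$ together with $x_{n_k}\ge 0$ gives $-x\le \epsilon_k a$ for every $k$. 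Using $\epsilon_k\to 0$, for each $m\in\N$ choose $k$ with $\epsilon_k\le 1/m$; then $m(-x)\le a$, and the Archimedean property forces $-x\le 0$, so $x\in D^+$. Since closedness of a cone in a metric space can be tested by sequences, $D^+$ is closed.

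The only delicate point is the unconditional implication (the choice of $c_n\to\infty$ with $\sum c_n\|a_n\|<\infty$), which is really the place where the Banach completeness and closedness of $D^+$ cooperate to manufacture the common dominator $a$; after that, the three remaining implications are essentially bookkeeping with the definitions, with Archimedean only entering in the final step to turn an \emph{approximate} positivity statement $-x\le \epsilon_k a$ into $-x\le 0$.
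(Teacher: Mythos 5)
Your proof is correct and takes essentially the same route as the paper: the same completeness-plus-closedness construction of the common dominator $a=\sum_n \epsilon_n^{-1}a_n$ for the unconditional statement, And\^o's theorem for (i), the same sandwich argument for (b)$\Rightarrow$(c), and the same Archimedean argument for (c)$\Rightarrow$(a). One cosmetic slip in the last step: you cite the lower bound $-\epsilon_k a\le x_{n_k}-x$, whereas $-x\le \epsilon_k a$ follows from the upper bound $x_{n_k}-x\le \epsilon_k a$ combined with $x_{n_k}\ge 0$; since both bounds are supplied by (iii), nothing is lost.
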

\begin{proof}
Suppose $D^+$ is closed. 
Let $a_1,a_2,\dots \in D^+$, $\sumn \|a_n\| <\infty$. 
Choose $\epsilon_n \in (0,\infty)$, $\epsilon_n \rightarrow 0$ such that $\sumn \epsilon_n^{-1} \|a_n\| <\infty$. 
By norm completeness, $a:= \sumn \epsilon_n^{-1} a_n$ exists. 
Because $D^+$ is closed $a \ge \epsilon_n^{-1} a_n$ for all $n$. \\
\ref{item:useful}$\Longrightarrow$\ref{item:archi_and_positive_ru_and_bound_useful}. 
\ref{item:condition_bounded} is implied by the above argument.
By 
Theorem \ref{theorem:ando} we have \ref{item:C_norm_bound}.  \\
\ref{item:archi_and_positive_ru_and_bound_useful}$\Longrightarrow$\ref{item:archi_and_ru_condition}. 
Let $x_1,x_2,\dots\in D$, $\sumn \|x_n\|<\infty$. 
Using \ref{item:C_norm_bound} let $a_1,a_2,\dots \in D^+$ with $\sumn \|a_n\|<\infty$ be such that $-a_n \le x_n \le a_n$. 
By \ref{item:condition_bounded} it then follows that $x_n \unifrightarrow 0$. \\
\ref{item:archi_and_ru_condition}$\Longrightarrow$ \ref{item:useful}. 
Take $b$ in the closure of $D^+$. 
For $n\in\N$, choose $x_n\in D^+$, $\sumn  \|x_n - b\|<\infty$. 
By \ref{item:condition_ru_convergence_summable_seq} there exist $a\in D^+$, $\epsilon_n \in (0,\infty)$ with $\epsilon_n \rightarrow 0$ and $-\epsilon_n a \le x_n - b \le \epsilon_n a$, so that $b \ge x_n - \epsilon_n a \ge - \epsilon_n a$. 
Then $b\ge 0$ because $D$ is Archimedean. 
\end{proof}

\begin{lemma}
\label{lemma:cone_closed_implies_norm_and_ru_limit_coincide}
Let $D$ be an ordered  Banach space for which $D^+$ is closed. 
Then 
\begin{align}
x_1,x_2,\dots \in D, \
 a\in D, \ 
 b\in D, \ 
 \|x_n - a\| \rightarrow 0, \
 x_n \unifrightarrow b 
\quad \Longrightarrow \quad a=b. 
\end{align}
\end{lemma}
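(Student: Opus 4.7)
The plan is to convert the hypothesis $x_n \unifrightarrow b$ into an order estimate on the norm limit $a - b$, and then appeal to the Archimedean property.

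First I would unpack the definition of uniform convergence: there exist $c \in D^+$ and $\epsilon_n \in (0,\infty)$ with $\epsilon_n \downarrow 0$ (the remark after the definition permits this) such that
\begin{align}
-\epsilon_n c \le x_n - b \le \epsilon_n c \qquad (n\in\N).
\end{align}
Fix an index $m$. Since the sequence $(\epsilon_n)$ is decreasing, for every $n \ge m$ we have $x_n - b \in [-\epsilon_m c,\epsilon_m c]$. The key observation is that, because $D^+$ is closed, the order interval $[-\epsilon_m c,\epsilon_m c] = (\epsilon_m c - D^+) \cap (-\epsilon_m c + D^+)$ is norm-closed. By hypothesis $x_n - b \to a - b$ in norm as $n \to \infty$, so passing to the limit inside this closed set yields
\begin{align}
-\epsilon_m c \le a - b \le \epsilon_m c.
\end{align}

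This holds for every $m$. Now I would invoke Theorem \ref{theorem:ordering_determined_by_dual_cone}: since $D^+$ is closed, $D$ is Archimedean. Given any $k \in \N$, choose $m$ large enough that $k \epsilon_m \le 1$; then $k(a-b) \le k\epsilon_m c \le c$ and similarly $k(b-a) \le c$. Since $k$ was arbitrary, the Archimedean property forces $a - b \le 0$ and $b - a \le 0$, hence $a = b$.

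There is no real obstacle here; the proof is a short three-step argument (extract the domination from uniform convergence, close the order interval using closedness of $D^+$, invoke the Archimedean property). The only subtlety worth emphasising is the role of $D^+$ being closed twice: once to ensure the order intervals $[-\epsilon_m c,\epsilon_m c]$ are norm-closed so that the order inequality survives the norm limit, and once (via Theorem \ref{theorem:ordering_determined_by_dual_cone}) to guarantee the Archimedean property needed at the end.
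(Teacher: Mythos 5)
Your proof is correct and follows essentially the same route as the paper's: extract the bound $-\epsilon_n c \le x_n - b \le \epsilon_n c$, use closedness of $D^+$ to pass the order inequality to the norm limit $a-b$, and conclude by the Archimedean property (the paper merely normalises $b=0$ first and cites Theorem \ref{theorem:ordering_determined_by_dual_cone} for Archimedeanness, exactly as you do). No gaps.
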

\begin{proof}
Assume $b=0$. 
There exist $c\in D^+$, $\epsilon_n \in (0,\infty)$ with $\epsilon_n \downarrow 0$ and $-\epsilon_n c \le x_n \le \epsilon_n c$. Then $-\epsilon_N c \le x_n \le \epsilon_N c$ whenever $n\ge N$. Since $D^+$ is closed, $-\epsilon_N c \le a \le \epsilon_N c$ for all $N$, and so, as $D$ is Archimedean by Theorem \ref{theorem:ordering_determined_by_dual_cone}, $a=0$. 
\end{proof}

With this we can easily prove the following theorem. 
In a recently published book by  Aliprantis and Tourky \cite{AlTo07}\footnote{For complete metrisable ordered vector spaces.}
but also in older Russian papers by Wulich \cite{Vu77}
one can find the proof that an order preserving linear map is continuous 
(see \cite[Theorem 2.32]{AlTo07} or combine \cite[Theorem III.2.2]{Vu77} (which states the result for $D_2=\R$) and \cite[Theorem VI.2.1]{Vu78}). 
Theorem \ref{theorem:order_bounded_linear_between_useful_implies_continuous} is more general in the sense that it states that linear order bounded maps are continuous. 

\begin{theorem}
\label{theorem:order_bounded_linear_between_useful_implies_continuous}
Let $D_1,D_2$ be ordered Banach spaces. 
Suppose $D_1^+$ is closed and generating and $D_2^+$ is closed. 
Let $T: D_1  \rightarrow D_2$ be linear and order bounded. 
Then $T$ is continuous. 
Consequently, if $T$ is an order isomorphism then it is a homeomorphism. 
\end{theorem}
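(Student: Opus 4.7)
The plan is to prove continuity via the closed graph theorem, bridging norm convergence and uniform convergence through the machinery assembled earlier in the section. By Theorem \ref{theorem:ordering_determined_by_dual_cone}, $D_1$ is Archimedean, and by assumption $D_1^+$ is closed and generating, so $D_1$ is a directed Archimedean ordered Banach space with closed cone. Hence Theorem \ref{theorem:equivalences_useful} applies, and in particular $D_1$ satisfies property \ref{item:condition_ru_convergence_summable_seq}: norm-absolutely-summable sequences converge uniformly to $0$.

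Suppose $x_n \to x$ in $D_1$ and $Tx_n \to y$ in $D_2$. Passing to a subsequence (which, for the purpose of the closed graph argument, is harmless), I may assume $\sum_n \|x_n - x\| < \infty$. Then property \ref{item:condition_ru_convergence_summable_seq} applied to $(x_n - x)$ yields $x_n - x \unifrightarrow 0$ in $D_1$. Since $T$ is linear and order bounded, Lemma \ref{lemma:order_bounded_linear_implies_continuous_wrt_ru} gives $Tx_n - Tx \unifrightarrow 0$ in $D_2$, i.e., $Tx_n \unifrightarrow Tx$. At the same time $Tx_n \to y$ in norm. Because $D_2^+$ is closed, Lemma \ref{lemma:cone_closed_implies_norm_and_ru_limit_coincide} forces $y = Tx$. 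Thus $T$ has a closed graph, and the closed graph theorem (applicable since $D_1$ and $D_2$ are Banach spaces) concludes that $T$ is continuous.

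For the final sentence, suppose $T$ is an order isomorphism. Both $T$ and $T^{-1}$ are order preserving, hence order bounded. Moreover, $T$ carries $D_1^+$ bijectively onto $D_2^+$, and since $D_1^+$ is generating so is $D_2^+$. Applying the theorem to $T$ and then to $T^{-1}: D_2 \to D_1$ (with the roles of the two spaces swapped, noting that $D_2^+$ is closed and generating and $D_1^+$ is closed) yields that both are continuous, so $T$ is a homeomorphism.

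The only subtle point is the subsequence reduction used to invoke closed graph: one must check that verifying $y = Tx$ along all absolutely summable subsequences suffices, which it does because every sequence converging in norm admits such a subsequence, and the closed graph condition only needs to be tested on arbitrary convergent pairs via some subsequence argument. The rest is a clean composition of Theorem \ref{theorem:equivalences_useful}, Lemma \ref{lemma:order_bounded_linear_implies_continuous_wrt_ru}, and Lemma \ref{lemma:cone_closed_implies_norm_and_ru_limit_coincide}.
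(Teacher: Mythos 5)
Your proof is correct and takes essentially the same route as the paper: pass to an absolutely summable (sub)sequence, use Theorem \ref{theorem:equivalences_useful} to obtain uniform convergence, transport it through $T$ via Lemma \ref{lemma:order_bounded_linear_implies_continuous_wrt_ru}, identify the two limits with Lemma \ref{lemma:cone_closed_implies_norm_and_ru_limit_coincide}, and conclude by the Closed Graph Theorem. The only cosmetic differences are that the paper normalises to $x=0$ by linearity, while you keep a general limit point and spell out the (correct) subsequence justification and the order-isomorphism consequence explicitly.
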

\begin{proof}
Let $x_1,x_2,\dots \in D_1$, $x_n \rightarrow 0$ and suppose $T x_n \rightarrow c$ for some $c\in D_2$. 
If from this we can prove $c=0$, then by the Closed Graph Theorem $T$ will be continuous. 
We may assume $\sumn \|x_n\| <\infty$. 
Then $x_n \unifrightarrow 0$ in $D_1$ (Theorem \ref{theorem:equivalences_useful}), so $T x_n  \unifrightarrow 0$ in  $D_2$ (Lemma \ref{lemma:order_bounded_linear_implies_continuous_wrt_ru}). Hence, $c=0$ according to Lemma \ref{lemma:cone_closed_implies_norm_and_ru_limit_coincide}. 
\end{proof}

We present a consequence as has been done for order preserving linear maps in \cite[Theorem VI.2.2]{Vu78}. 

\begin{corollary} 
Let $D_1$ be an ordered Banach space with closed generating cone. 
Let $D_2$ be an ordered normed vector space with a normal cone. 
Let $T: D_1  \rightarrow D_2$ be linear and order bounded. 
Then $T$ is continuous. 
\end{corollary}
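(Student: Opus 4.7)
The plan is to reduce to Theorem \ref{theorem:order_bounded_linear_between_useful_implies_continuous} by passing to the Banach completion of $D_2$. Let $\tilde D_2$ be the norm completion of $D_2$; I would take $\tilde D_2^+ := \overline{D_2^+}$ (closure computed in $\tilde D_2$) as the candidate positive cone. The delicate point is that this set need not automatically be a proper cone, and this is exactly where normality of $D_2^+$ is used. If $\pm x \in \tilde D_2^+$, pick $x_n, y_n \in D_2^+$ with $x_n \to x$ and $y_n \to -x$, so that $x_n + y_n \to 0$. Since $0 \le x_n \le x_n + y_n$ in $D_2$, normality yields a constant $k$ with $\|x_n\| \le k\|x_n + y_n\| \to 0$, hence $x = 0$. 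Thus $\tilde D_2$ becomes an ordered Banach space with closed cone $\tilde D_2^+$.

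Next I view $T$ as a map $D_1 \to \tilde D_2$ and check it remains order bounded with respect to the order induced by $\tilde D_2^+$. If $T[a,b] \subseteq [c,d]$ in $D_2$, then for every $x \in [a,b]$ both $Tx - c$ and $d - Tx$ lie in $D_2^+ \subseteq \tilde D_2^+$, so the interval inclusion $T[a,b] \subseteq [c,d]$ persists in $\tilde D_2$. With $D_1^+$ closed and generating and $\tilde D_2^+$ closed, Theorem \ref{theorem:order_bounded_linear_between_useful_implies_continuous} delivers continuity of $T: D_1 \to \tilde D_2$. Since the norm on $\tilde D_2$ restricts to the original norm of $D_2$ and $T$ takes values in $D_2 \subseteq \tilde D_2$, this is the same as continuity of $T: D_1 \to D_2$.

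The main obstacle is the first step, namely showing $\tilde D_2^+$ is a proper cone in the completion. Normality is the right hypothesis for precisely this: without it one could in principle produce a nonzero element $x$ approximated from above by a sequence in $D_2^+$ and also approximated from below by such a sequence, and the candidate ordering on $\tilde D_2$ would fail to be antisymmetric. Once that geometric fact is in place, the rest is a routine transfer to the previously proved Banach-space version of the theorem.
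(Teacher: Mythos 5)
Your argument is correct, but it takes a genuinely different route from the paper. You transfer the problem to the norm completion $\tilde D_2$, equip it with the closed wedge $\overline{D_2^+}$, and use normality (via the standard characterization: there is $c>0$ with $0\le u\le v \Rightarrow \|u\|\le c\|v\|$, which appears in the paper's Appendix A) exactly once, to show this closure is a \emph{proper} cone; order boundedness of $T$ trivially persists because $D_2^+\subseteq\overline{D_2^+}$, and then Theorem \ref{theorem:order_bounded_linear_between_useful_implies_continuous} applies directly to $T:D_1\to\tilde D_2$, with continuity into $D_2$ recovered since the completion norm restricts isometrically. The paper instead scalarizes: normality gives $D_2^\sim=D_2'$, so for each $\phi\in D_2'$ the functional $\phi\circ T$ is order bounded, hence continuous by the same theorem (with $\R$ as target), whence $T(\Delta)$ is weakly bounded for the unit ball $\Delta$ and norm bounded by the Principle of Uniform Boundedness. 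Your construction is more self-contained (no duality result, no uniform boundedness) at the cost of building the completed ordered space and checking properness of the closed cone; the paper's proof is shorter once one grants the cited duality fact that normality identifies the topological and order duals. Both uses of normality are essential and play the analogous role of ruling out the pathology that would otherwise defeat the closed-graph/closed-cone machinery.
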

\begin{proof}
As $D_2^+$ is normal, we have $D_2^{\sim} = D_2'$ (see \cite[Corollary 2.27]{AlTo07}). 
Let $\Delta$ be the unit ball in $D_1$. 
Then for all $\phi \in D_2'$, the map $\phi \circ T$ is an order bounded functional, whence continuous by Theorem \ref{theorem:order_bounded_linear_between_useful_implies_continuous}. 
Thus $\phi \circ T( \Delta)$ is bounded for all $\phi \in D_2'$. 
By an application of the Principle of Uniform Boundedness (see \cite[Corollary III.4.3]{Co07}), $T(\Delta)$ is norm bounded. 
\end{proof}


\section{The Bochner integral on ordered Banach spaces}
\label{section:bochner_integral_on_ordered_Banach_spaces}

\textbf{In this section $(X,\cA,\mu)$ is a complete $\sigma$-finite measure space, with $\mu \ne 0$. 
For more assumptions see \ref{obs:assumptions_space_useful} and \ref{obs:assumption_D_ordered_Banach space}. }

We define the integral of simple functions in Definition \ref{obs:simple_functions_and_their_integrals} and recall the definition and facts on Bochner integrability in Definition \ref{definition:Bochner_integral} and \ref{obs:facts_on_Bochner}. 
After that, we consider an ordered Banach space $D$. 
In Theorems \ref{theorem:Bochner_functions_Banach_lattice} and \ref{theorem:closed_cone_implies_order_preservingness_integral} we describe the order structure of the space $\fB_D$ of Bochner integrable functions. 
In \ref{obs:comming_results_on_relation_Banach_space_and_Bochner_space} we summarise the  results of \ref{obs:D_closed_cone_then_Bochner_int_functions_too} -- \ref{theorem:closed_and_generating_for_bochner_and_space}, in which we compare closedness and generatingness of the positive cones of $D$ and $\fB_D$. 

\begin{definition}
\label{obs:simple_functions_and_their_integrals}
Let $E$ be a vector space.
We say that a function $f: X\rightarrow E$ is \emph{simple} if there exist $N\in\N$, $a_1,\dots,a_N\in E$, $A_1,\dots, A_N\in \cA$ with $\mu(A_1),\dots,\mu(A_N)<\infty$ for which 
\begin{align}
\label{eqn:representation_simple_functions}
f = \sum_{n=1}^N a_n \1_{A_n}.
\end{align}
The simple functions form a linear subspace $S$ of $E^X$, which is a Riesz subspace of $E^X$ in case $E$ is a Riesz space. 
We define $\varphi : S\rightarrow E$ by 
\begin{align}
\label{eqn:integral_for_simple_functions}
\varphi(f) = \sum_{n=1}^N \mu(A_n) a_n,
\end{align}
where $f,N,A_n,a_n$ are as in \eqref{eqn:representation_simple_functions}. 
$\varphi(f)$ is called the 
\emph{integral} of $f$. 
We write $S_\R$ for the linear space of simple functions $X\rightarrow \R$. 
\end{definition}

Definition \ref{definition:Bochner_integral} and the facts in \ref{obs:facts_on_Bochner} can be found in Chapter III in the book by E. Hille and R.S. Phillips, \cite{HiPh57}. 

\begin{definition}
\label{definition:Bochner_integral}
Let $(D,\|\cdot\|)$ be a Banach space. 
A function $f: X \rightarrow D$ is called 
\emph{Bochner integrable} 
whenever there exists a sequence of simple functions $(s_n)_{n\in\N}$ such that 
\begin{align}
\label{eqn:def_bochner}
\int \|f(x) -s_n(x)\| \D \mu(x) \rightarrow 0 . 
\end{align}
Then the sequence $(\varphi(s_n ) )_{n\in\N}$ converges.
Its limit is independent of the choice of the sequence $(s_n)_{n\in\N}$
 and is called the 
\emph{Bochner integral}
of $f$. 
\end{definition}

\begin{obs}
\label{obs:assumptions_space_useful}
\textbf{For the rest of this section, $D$ is a Banach space (with norm $\|\cdot\|$), and, we write $\fB$ (or $\fB_D$) for the Banach space of classes of Bochner integrable functions $X \rightarrow D$, with norm $\|\cdot\|_\fB$ (see \ref{obs:facts_on_Bochner}\ref{item:Bochner_Banach}). 
We write $\fb$ (or $\fb_D$) for the Bochner integral on $\fB$. 
}
\end{obs}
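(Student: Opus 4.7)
The block labelled \ref{obs:assumptions_space_useful} is not a theorem, lemma, proposition or claim in the mathematical sense; it is a declaration of standing notation and assumptions for the rest of the section. Consequently there is no logical assertion to discharge: the plan is simply to observe that (i) $D$ is henceforth a fixed Banach space, (ii) $\fB$ (equivalently $\fB_D$) is the symbol reserved for the space of $\mu$-equivalence classes of Bochner integrable functions $X\to D$, equipped with the norm $\|\cdot\|_\fB$, and (iii) $\fb$ (equivalently $\fb_D$) denotes the map sending such a class to its Bochner integral in $D$.

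The only content hiding behind this notational convention is the parenthetical reference to \ref{obs:facts_on_Bochner}\ref{item:Bochner_Banach}, which asserts that $(\fB,\|\cdot\|_\fB)$ really is a Banach space. Since the statement itself does not claim this but merely cites it, one does not reprove it here. Were one to expand that supporting fact, the standard route (as in Hille--Phillips \cite{HiPh57}) would be: set $\|f\|_\fB := \int \|f(x)\|\,\D\mu(x)$, verify that this is a seminorm on the set of Bochner integrable functions that descends to a norm on $\mu$-a.e.\ equivalence classes, and check completeness by extracting a fast Cauchy subsequence, forming the pointwise-a.e.\ limit in the Banach space $D$, and applying dominated convergence in $\R$ to the tail norms. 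The continuity of $\fb$ (with operator norm at most $1$) then follows from the inequality $\|\fb(f)\|\le\|f\|_\fB$ on simple functions and extension by density.

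The main obstacle, therefore, is not mathematical but expository: to make sure the notation $\fB$, $\fB_D$, $\fb$, $\fb_D$, $\|\cdot\|_\fB$ is used consistently in the subsequent results of the section (in particular in Theorems \ref{theorem:Bochner_functions_Banach_lattice} and \ref{theorem:closed_cone_implies_order_preservingness_integral}, and in the cone-closedness/generatingness comparisons summarised in \ref{obs:comming_results_on_relation_Banach_space_and_Bochner_space}). No separate proof is required for \ref{obs:assumptions_space_useful} itself.
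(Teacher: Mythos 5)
You are right that \ref{obs:assumptions_space_useful} is a standing-notation block with no assertion to prove; the paper itself gives no proof and simply cites Hille--Phillips for the fact that $(\fB,\|\cdot\|_\fB)$ is a Banach space, exactly as you note. Your reading matches the paper's treatment, and your optional sketch of the completeness argument is the standard one.
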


\begin{obs}
\label{obs:facts_on_Bochner}
Some facts on the Bochner integrable functions: 
\begin{enumerate}
\item \cite[Theorem 3.7.4]{HiPh57} \& \cite[Proposition 2.15]{Ry02}
\label{item:Bochner_integrable_when_measurable_and_norm_integrable}
If $f: X \rightarrow D$ is Borel measurable, $f(X)$ is separable and $\int \|f\| \D \mu <\infty$, then $f$ is Bochner integrable. 
\item \cite[Theorem 3.7.8]{HiPh57} 
\label{item:Bochner_Banach}
$\fB$ is a Banach space under the norm $\|\cdot\|_\fB : \fB \rightarrow [0,\infty) $ given by 
\begin{align}
\label{eqn:norm_Bochner}
\|f\|_\fB =  \int \|f\| \D \mu. 
\end{align}
\item \cite[Theorem 3.7.12]{HiPh57} 
\label{item:Bochner_and_closed_map}
Let $E$ be a Banach space and $T: D \rightarrow E$ be linear and continuous. 
If $f\in \fB_D$, then  $T\circ f \in \fB_E$ and 
\begin{align}
T( \fb_D(f)) = \fb_E ( T \circ f ). 
\end{align}
\item \cite[Theorem 3.7.9]{HiPh57}
\label{item:dominated_convergence_Bochner}
Let $f_1,f_2,\dots$ be in $\fB$, $f: X \rightarrow D$ and $h \in \cL^1(\mu)^+$. 
Suppose that $f_n(x) \rightarrow f(x)$ and $\|f_n(x)\|\le h(x)$ for $\mu$-almost all $x\in X$. Then $f \in \fB$ and 
\begin{align}
\fb(f_n) \rightarrow \fb(f). 
\end{align}
\item 
\label{item:ordering_on_Bochner_functions}
If $D$ is an ordered Banach space then 
$\fB$ is an ordered Banach space under the ordering given by 
\begin{align}
f \le g \quad (\mbox{in } \fB) \iff f \le g \quad \mu-\mbox{a.e.}.
\end{align}
\end{enumerate}
\end{obs}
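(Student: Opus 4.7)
The plan is to focus on item 5, since items 1--4 are quoted directly from \cite{HiPh57} and \cite{Ry02} and require no separate argument. What needs checking is that the relation $f \le g \iff f(x) \le g(x)$ for $\mu$-a.e.\ $x$ is a well-defined partial order on the set $\fB$ of $\mu$-a.e.\ equivalence classes of Bochner integrable functions, and that it is compatible with the vector space operations; combined with item 2 (that $\fB$ is already a Banach space) this makes $\fB$ an ordered Banach space.

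First I would verify that the relation descends to classes. If $f = \tilde f$ and $g = \tilde g$ $\mu$-a.e., and $f(x) \le g(x)$ off a null set $N$, then $\tilde f(x) \le \tilde g(x)$ off the null set $N \cup \{f \ne \tilde f\} \cup \{g \ne \tilde g\}$, so the relation on representatives passes unambiguously to classes.

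Next I would check the three partial-order axioms, at each step using only the order on $D$ applied pointwise and the fact that a countable union of null sets is null. Reflexivity is pointwise in $D$. For transitivity, if $f \le g$ a.e.\ and $g \le h$ a.e., then outside the union of the two exceptional null sets we have $f(x) \le g(x) \le h(x)$. For antisymmetry, if $f \le g$ and $g \le f$ both hold off null sets, then off their union we have $f(x) = g(x)$ by antisymmetry of the order on $D$, so $f = g$ as classes in $\fB$. Compatibility with the vector space structure is analogous: $f \le g$ a.e.\ implies $f + h \le g + h$ a.e.\ for any $h \in \fB$, and $\alpha f \le \alpha g$ a.e.\ whenever $\alpha \ge 0$, since both the translation invariance and scaling properties of the order hold pointwise in $D$.

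There is no real obstacle here; the only point deserving a line of comment is that everything rests on the fact that the a.e.\ relation is well-defined on representatives in $\fB$, and that the pointwise order axioms of $D$ survive after discarding countably many null sets. Together with item 2, this yields that $\fB$, equipped with this order and the Bochner norm $\|\cdot\|_{\fB}$, is an ordered Banach space.
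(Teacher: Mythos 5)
Your proposal is correct and matches the paper's treatment: items (a)--(d) are taken as quoted results from \cite{HiPh57} and \cite{Ry02}, and item (e) is exactly the routine verification you carry out (well-definedness of the a.e.\ relation on classes, the order axioms with antisymmetry holding precisely because $\fB$ consists of equivalence classes, and compatibility with the vector operations), which the paper leaves implicit. Nothing is missing, since the paper's notion of ordered Banach space requires no a priori compatibility between norm and order beyond what you establish.
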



\begin{theorem}
\label{theorem:Bochner_functions_Banach_lattice}
Let $D$ be a Banach lattice. 
Then $\fB_D$ is a Banach lattice and $\fb$ is linear and order preserving. 
\end{theorem}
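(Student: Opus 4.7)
The plan is to show that $\fB_D$ inherits its lattice structure pointwise from $D$, that this structure is compatible with the Bochner norm $\|\cdot\|_\fB$, and that $\fb$ is linear and order preserving by approximation with simple functions having non-negative coefficients.

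First I would verify that $\fB_D$ is a Riesz space. Given $f\in\fB_D$, pick simple functions $t_n:X\to D$ with $\int \|f-t_n\|\D\mu\to 0$. Writing each $t_n$ in disjoint form $\sum_k a_k\1_{A_k}$, the function $|t_n|:=\sum_k |a_k|\1_{A_k}$ is again simple. Since $d\mapsto |d|$ is $1$-Lipschitz on the Banach lattice $D$ and preserves the norm, the function $|f|$ defined by $|f|(x):=|f(x)|$ satisfies $\int \bigl\| |f|-|t_n|\bigr\|\D\mu \le \int\|f-t_n\|\D\mu\to 0$, so $|f|\in\fB_D$ directly from Definition \ref{definition:Bochner_integral}. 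The identities $f\vee g=\tfrac12(f+g+|f-g|)$ and $f\wedge g=\tfrac12(f+g-|f-g|)$ then land in $\fB_D$, and a routine check on a full-measure set shows that they realise the supremum and infimum of $f,g$ in $\fB_D$ under the pointwise-a.e.\ ordering from \ref{obs:facts_on_Bochner}(e). The norm is a lattice norm since $|f|\le|g|$ a.e.\ implies $\|f(x)\|_D=\bigl\| |f(x)|\bigr\|_D\le \bigl\| |g(x)|\bigr\|_D=\|g(x)\|_D$ a.e., hence $\|f\|_\fB\le\|g\|_\fB$. Together with norm completeness (\ref{obs:facts_on_Bochner}(b)), this makes $\fB_D$ a Banach lattice.

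Linearity of $\fb$ is a standard consequence of the linearity of $\varphi$ on simple functions combined with the estimate $\|\fb(f)-\varphi(s_n)\|_D\le \|f-s_n\|_\fB$ that follows from Definition \ref{definition:Bochner_integral}. For order preservingness, by linearity it suffices to show $\fb(h)\in D^+$ whenever $h\ge 0$ in $\fB_D$. Take simple $s_n$ with $\|h-s_n\|_\fB\to 0$; replacing each by $s_n^+$ (obtained by taking positive parts of the coefficients in a disjoint representation of $s_n$) yields simple functions taking values in $D^+$. Because $d\mapsto d^+$ is $1$-Lipschitz and $h=h^+$ a.e., $\|h-s_n^+\|_D=\|h^+-s_n^+\|_D\le \|h-s_n\|_D$ a.e., so $\|h-s_n^+\|_\fB\to 0$ as well. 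Each $\varphi(s_n^+)$ is a non-negative combination of elements of $D^+$, hence lies in $D^+$, and since $D^+$ is closed in a Banach lattice, $\fb(h)=\lim_n \varphi(s_n^+)\in D^+$.

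The main technical point is the first step: verifying that $|f|$ remains Bochner integrable. The rest is bookkeeping, handled by choosing disjoint representations of simple functions (so that taking $|\cdot|$ or positive parts of coefficients stays within the class of simple functions) and by working on a common full-measure set when translating between pointwise-a.e.\ statements and statements in $\fB_D$.
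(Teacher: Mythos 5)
Your proof is correct and follows essentially the same route as the paper, whose entire argument is the observation that the inequality $\|\,|x|-|y|\,\|\le\|x-y\|$ in a Banach lattice makes $|f|$ Bochner integrable whenever $f$ is, the remaining verifications (lattice norm, completeness, linearity, positivity of $\fb$) being routine. Your explicit argument for order preservingness via $s_n^+$ and closedness of $D^+$ is fine and fills in a step the paper leaves implicit (it could equally be cited from Theorem \ref{theorem:closed_cone_implies_order_preservingness_integral}, since the cone of a Banach lattice is closed).
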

\begin{proof}
The Bochner integrable functions form a Riesz space because of the inequality $\| |x| - |y| \| \le \|x-y\|$. 
\end{proof}

\begin{theorem}
\label{theorem:closed_cone_implies_order_preservingness_integral}
Let $D$ be an ordered Banach space for which $D^+$ is closed. 
Then $\fb$ is order preserving. 
\end{theorem}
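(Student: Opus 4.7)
The plan is to reduce the order-preservation of $\fb$ to its behaviour on the positive cone: since $\fb$ is linear (by standard Bochner theory), $f \le g$ in $\fB_D$ is equivalent to $g - f \ge 0$ a.e., and $\fb(f) \le \fb(g)$ is equivalent to $\fb(g - f) \in D^+$. So it suffices to prove: if $f \in \fB_D$ with $f \ge 0$ $\mu$-a.e., then $\fb(f) \in D^+$.

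To show $\fb(f) \in D^+$ I would use the dual characterisation from Theorem \ref{theorem:ordering_determined_by_dual_cone}: since $D^+$ is closed, we have $y \in D^+$ if and only if $\alpha(y) \ge 0$ for every $\alpha \in (D')^+$. So I fix an arbitrary $\alpha \in (D')^+$ and try to show $\alpha(\fb(f)) \ge 0$. Here $\alpha: D \to \R$ is linear and continuous, so \ref{obs:facts_on_Bochner}\ref{item:Bochner_and_closed_map} applies with $E = \R$: the composition $\alpha \circ f$ is Bochner integrable (i.e., in $\cL^1(\mu)$) and
\begin{align*}
\alpha(\fb(f)) \;=\; \fb_\R(\alpha \circ f) \;=\; \int \alpha \circ f \DD\mu.
\end{align*}
Because $\alpha$ is a positive functional and $f(x) \in D^+$ for $\mu$-almost every $x$, the real-valued function $\alpha \circ f$ is non-negative $\mu$-a.e., and hence the integral on the right is non-negative. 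Thus $\alpha(\fb(f)) \ge 0$ for every $\alpha \in (D')^+$, which by Theorem \ref{theorem:ordering_determined_by_dual_cone} gives $\fb(f) \ge 0$, as required.

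There is really no serious obstacle: the argument is essentially a Pettis-style dualisation, and every ingredient is already in place (the dual characterisation of the cone, the interaction of Bochner integrals with continuous linear maps, and the positivity of the real integral). The only point worth checking carefully is that $\alpha \in (D')^+$ sends $D^+$ into $[0,\infty)$ and not merely into $[0,\infty]$, so that the a.e.\ inequality $\alpha \circ f \ge 0$ is genuinely a statement about a real integrable function; this is automatic from the definition of $(D')^+$.
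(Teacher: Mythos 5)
Your argument is correct and is essentially the paper's own proof: the paper likewise reduces to $f \ge 0$, applies a positive functional $\alpha \in (D')^+$ to get $\alpha(\fb(f)) = \int \alpha \circ f \DD\mu \ge 0$, and concludes via Theorem \ref{theorem:ordering_determined_by_dual_cone}. Your write-up merely makes explicit the reduction by linearity and the use of \ref{obs:facts_on_Bochner}\ref{item:Bochner_and_closed_map}, which the paper leaves implicit.
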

\begin{proof}
Let $f\in \fB$ and $f\ge 0$. 
Then 
\begin{align}
\alpha ( \fb(f) ) = \int \alpha \circ f \D \mu \ge 0 \qquad (\alpha \in (D')^+). 
\end{align}
Whence $ \fb(f) \ge 0 $, by Theorem \ref{theorem:ordering_determined_by_dual_cone}. 
\end{proof}

\begin{obs}
\label{obs:assumption_D_ordered_Banach space}
\textbf{For the rest of this section, $D$ is an ordered Banach space.}
\end{obs}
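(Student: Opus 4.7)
The final item labeled \ref{obs:assumption_D_ordered_Banach space} is not a proposition, lemma, theorem, or claim; it is a boldface declaration fixing a standing hypothesis for the remainder of Section \ref{section:bochner_integral_on_ordered_Banach_spaces}. Concretely, it upgrades the earlier running convention of \ref{obs:assumptions_space_useful} (where $D$ was only a Banach space with norm $\|\cdot\|$) by additionally endowing $D$ with a compatible order that makes it an ordered Banach space, so that expressions such as $D^+$, $\fB_D^+$, and assertions like ``$\fb$ is order preserving'' become meaningful in the statements that follow. Since the declaration asserts only ``assume this from now on,'' it has no mathematical content to be verified, and therefore no proof is possible or required.

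Accordingly, there is nothing to carry out and no obstacle to discuss: no construction needs to be made, no earlier result is invoked, and no auxiliary lemma is combined. The only forward-looking action associated with this declaration is editorial rather than mathematical, namely to ensure that every subsequent result in the section (in particular, the statements previewed in the section introduction about closedness and generatingness of $\fB_D^+$ and their relationship to the analogous properties of $D^+$) is formulated and proved using only the order structure now postulated, without silently assuming strictly stronger hypotheses such as $D$ being a Banach lattice, $D^+$ being closed, or $D^+$ being generating, each of which will instead be hypothesised explicitly in the individual theorems where it is needed.
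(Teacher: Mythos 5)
You correctly identify that \ref{obs:assumption_D_ordered_Banach space} is a standing convention rather than a mathematical claim, and the paper likewise attaches no proof to it. Your reading of its role (upgrading the running hypothesis of \ref{obs:assumptions_space_useful} so that order-theoretic statements about $D$ and $\fB_D$ make sense in the remainder of the section) matches the paper exactly; there is nothing further to verify.
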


\begin{obs}
\label{obs:comming_results_on_relation_Banach_space_and_Bochner_space}
The following is a list of results presented in \ref{obs:D_closed_cone_then_Bochner_int_functions_too} -- \ref{theorem:closed_and_generating_for_bochner_and_space}. 
\begin{enumerate}
\item 
	$\fB^+$ is closed if and only if $D^+$ is (\ref{obs:D_closed_cone_then_Bochner_int_functions_too}). 
\item 
	If $\fB$ is directed, then so is $D$ (straightforward, see also Theorem \ref{theorem:same_bound_norm_space_and_bochner}). 
\item 
	Let $C>0$ and $D^+$ be closed and generating. If $\|\cdot\|$ is $C$-absolutely dominating, then so is $\|\cdot\|_\fB$ (Lemma \ref{lemma:norm_bound_for_bochner}). 
\item 
	Let $C>0$. If $\fB$ is directed and $\|\cdot\|_\fB$ is $C$-absolutely dominating, then so is $\|\cdot\|$ (Theorem \ref{theorem:same_bound_norm_space_and_bochner}). 
\item 
	$\fB^+$ is closed and generating if and only if $D^+$ is closed and generating (Theorem \ref{theorem:closed_and_generating_for_bochner_and_space}). 
\item 
	If there exist disjoint $A_1,A_2,\dots$ in $\cA$ with $0< \mu(A) <\infty$ for all $n$: 
	If $\fB^+$ is generating, then so is $D^+$ and $\|\cdot\|$ is absolutely dominating (Corollary \ref{cor:bochner_directed_when_countable_pos_sets_implies_norm_bound}). 
\item 
	If no such $A_1,A_2,\dots$ exist: $\fB^+$ is generating if and only if $D^+$ is generating (\ref{obs:bochner_directed_finite_atoms}).
\end{enumerate}
\end{obs}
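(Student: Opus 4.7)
My plan is to reduce item~7 to a componentwise statement on a finite product of copies of $D$, by showing that the hypothesis forces $X$ to be (modulo null sets) atomic with finitely many atoms.

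\emph{Step~1 (atomic decomposition).} Under the hypothesis, $\cA$ contains no infinite pairwise disjoint family of finite positive measure. Combined with $\sigma$-finiteness, I would show that modulo null sets $X = A_1 \sqcup \cdots \sqcup A_N$ with each $A_i$ an atom of finite positive measure. This is an exhaustion argument: any finite-positive-measure set that admits a non-trivial measurable splitting can be split further, and iterating such splittings on the pieces of a $\sigma$-finite enumeration would otherwise produce an infinite pairwise disjoint family violating the hypothesis.

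\emph{Step~2 (structure of $\fB$).} A simple function from Definition~\ref{obs:simple_functions_and_their_integrals} is $\mu$-a.e.\ constant on each $A_i$. For $f \in \fB$, pick an approximating sequence $(s_n)$ as in Definition~\ref{definition:Bochner_integral} and set $a_i^{(n)} := s_n|_{A_i}$. Since
\begin{align*}
\mu(A_i)\,\|a_i^{(n)} - a_i^{(m)}\| = \int_{A_i} \|s_n - s_m\| \D \mu \le \|s_n - s_m\|_{\fB},
\end{align*}
and $(s_n)$ is Cauchy in $\fB$, each sequence $(a_i^{(n)})_n$ is Cauchy in $D$ with limit $a_i$, and $f = \sum_{i=1}^N a_i \1_{A_i}$ $\mu$-a.e. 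The map $\Phi\colon \fB \to D^N$, $f \mapsto (a_1, \ldots, a_N)$, is then a linear bijection; by \ref{obs:facts_on_Bochner}\ref{item:ordering_on_Bochner_functions} it is also an order isomorphism with $\Phi(\fB^+) = (D^+)^N$.

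\emph{Step~3 (the iff).} The cone $(D^+)^N$ generates $D^N$ if and only if $D^+$ generates $D$: one direction is coordinatewise decomposition, and the other reads the first coordinate of a decomposition of $(a,0,\ldots,0)$. Transported through $\Phi$, this gives the desired equivalence between $\fB^+$ generating $\fB$ and $D^+$ generating $D$.

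The main obstacle is Step~1: ensuring that the exhaustion terminates after finitely many stages and really yields a finite atomic partition modulo null sets. A clean formulation is that the Boolean algebra $\cA$ modulo null sets, restricted to finite-measure elements, contains no infinite antichain under the hypothesis, so by $\sigma$-finiteness it must be a finite Boolean algebra whose atoms realize the $A_i$.
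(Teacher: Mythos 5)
Your proposal only treats item (g) of the list; the other six items are the content of the cited results (\ref{obs:D_closed_cone_then_Bochner_int_functions_too}, Lemma \ref{lemma:norm_bound_for_bochner}, Theorems \ref{theorem:same_bound_norm_space_and_bochner} and \ref{theorem:closed_and_generating_for_bochner_and_space}) and are not addressed. For item (g) your argument is essentially the paper's own in \ref{obs:bochner_directed_finite_atoms}: decompose $X$, modulo null sets, into finitely many disjoint atoms of finite positive measure, identify $\fB$ with $D^N$ by a linear order isomorphism, and transfer generation of the cone coordinatewise. The only differences are that you spell out the exhaustion argument producing the finite atomic partition, which the paper asserts without proof, and that your map $\Phi$ omits the scaling factors $\mu(A_i)$ that make the paper's map $T$ isometric --- harmless for this item, since only the order isomorphism is needed.
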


\begin{obs}
\label{obs:D_closed_cone_then_Bochner_int_functions_too}
Whenever $f\in \fB$, $f_n \in \fB^+$ with $\int \|f-f_n\|\D \mu \rightarrow 0$, then there exist Bochner integrable $g_n\ge 0$ with $\sumn \int \|f-g_n\|\D \mu <\infty$; this implies $g_n \rightarrow f$ $\mu$-almost everywhere. So whenever $D^+$ is closed this implies $f\ge 0$ $\mu$-almost everywhere. \\
We infer that $\fB^+$ is closed whenever $D^+$ is. \\
On the other hand, if $\fB^+$ is closed then so is $D^+$. 
Indeed, let $A\in \cA$, $0<\mu(A)<\infty$. 
If $a_n\in D^+$ and $a_n \rightarrow a$, then $\|a\1_A - a_n \1_A \|_{\fB} \rightarrow 0$.  Therefore $a\in D^+$. 
\end{obs}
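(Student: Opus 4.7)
The statement to prove is the equivalence: $\fB^+$ is closed if and only if $D^+$ is closed. I would handle the two directions separately, exploiting that the hypothesis $\mu\neq 0$ together with $\sigma$-finiteness guarantees the existence of a measurable $A\in\cA$ with $0<\mu(A)<\infty$.

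For the direction $D^+$ closed $\Rightarrow$ $\fB^+$ closed, I would start with a sequence $(f_n)$ in $\fB^+$ converging in the Bochner norm $\|\cdot\|_\fB$ to some $f\in\fB$. The familiar trick is to pass to a subsequence $(g_n)$, chosen so that $\sumn \|f - g_n\|_\fB = \sumn \int \|f-g_n\|\DD\mu < \infty$. By monotone convergence, $\sumn \|f(x) - g_n(x)\| < \infty$ for $\mu$-a.e.\ $x$, so in particular $g_n(x) \to f(x)$ pointwise $\mu$-a.e. Since each $g_n(x)$ lies in the closed cone $D^+$, so does $f(x)$ for $\mu$-a.e.\ $x$, i.e.\ $f\in \fB^+$. (Convergence in $\fB$ is norm convergence, so it suffices that this holds for some subsequence, because closedness of $\fB^+$ only requires the limit of \emph{any} norm convergent sequence from $\fB^+$ to land in $\fB^+$; one recovers the original $(f_n)$ via uniqueness of the norm limit.)

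For the converse $\fB^+$ closed $\Rightarrow$ $D^+$ closed, I would fix $A\in\cA$ with $0<\mu(A)<\infty$ and use the isometric-like embedding $a\mapsto a\1_A$ from $D$ into $\fB$: this map sends $D^+$ into $\fB^+$ and satisfies $\|a\1_A - b\1_A\|_\fB = \mu(A)\,\|a-b\|$. Hence if $a_n\in D^+$ and $a_n\to a$ in $D$, then $a_n\1_A \to a\1_A$ in $\fB$, and closedness of $\fB^+$ gives $a\1_A \geq 0$ $\mu$-a.e., from which $a\in D^+$ (using that $\mu(A) > 0$).

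The only mild obstacle is the subsequence-extraction step in the first direction, which is routine but hinges on the fact that $\fB$ is a Banach space under $\|\cdot\|_\fB$ (fact \ref{obs:facts_on_Bochner}\ref{item:Bochner_Banach}) so that summable subsequences can be produced; everything else is a direct application of the definitions and of pointwise closedness of $D^+$.
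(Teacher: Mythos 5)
Your proposal is correct and follows essentially the same route as the paper: the summable-subsequence extraction giving $\mu$-a.e.\ convergence (the paper's $g_n$) combined with closedness of $D^+$ pointwise for one direction, and the embedding $a\mapsto a\1_A$ for a set $A$ with $0<\mu(A)<\infty$ for the converse. The paper merely states these steps more tersely; your filled-in details (Beppo Levi for a.e.\ summability, the subsequence sufficing for closedness) are exactly what is implicit there.
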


\begin{lemma}
\label{lemma:simple_function_norm_close}
Suppose $D^+$ is generating and $C>0$ is such that $\|\cdot\|$ is $C$-absolutely dominating. 
Let $f: X \rightarrow D$ be simple, let $\epsilon>0$. 
Then there exists a simple $g: X \rightarrow D^+$ with $-g \le f \le g$ and $\int \|g\| \D \mu \le C \int \|f\|\D \mu + \epsilon$. 
\end{lemma}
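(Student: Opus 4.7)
The statement is essentially just a pointwise application of the $C$-absolutely dominating property at each value of $f$, so the proof will be elementary bookkeeping. First I would normalize the representation: given $f = \sum_{n=1}^N a_n \1_{A_n}$ as in \eqref{eqn:representation_simple_functions}, I replace $A_1, \dots, A_N$ by the (finitely many) atoms of the algebra they generate, so that the resulting representation $f = \sum_{n=1}^{N'} a'_n \1_{A'_n}$ has pairwise disjoint $A'_n$, each of finite measure. Then $\int \|f\| \D \mu = \sum_n \mu(A'_n)\, \|a'_n\|$ cleanly.

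Next, for each $n$ I invoke $C$-absolute dominance of the norm at the point $a'_n$: by \eqref{eqn:C_bound_norm}, $\inf\{\|b\| : b \in D^+,\ -b \le a'_n \le b\} \le C \|a'_n\|$, so for any $\delta_n > 0$ I may choose $b_n \in D^+$ with $-b_n \le a'_n \le b_n$ and $\|b_n\| \le C\|a'_n\| + \delta_n$. I pick the $\delta_n$ in advance so that $\sum_n \mu(A'_n)\, \delta_n \le \epsilon$; for instance, $\delta_n := \epsilon / (N'(1 + \mu(A'_n)))$ suffices.

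Now define
\begin{align}
g := \sum_{n=1}^{N'} b_n\, \1_{A'_n}.
\end{align}
Since the $A'_n$ are disjoint and $b_n \in D^+$, the function $g$ is simple with values in $D^+$ (taking $g(x)=0 \in D^+$ off $\bigcup_n A'_n$). On each $A'_n$ the inequality $-b_n \le a'_n \le b_n$ gives $-g(x) \le f(x) \le g(x)$, and off $\bigcup_n A'_n$ both sides vanish, so $-g \le f \le g$ globally. Finally,
\begin{align}
\int \|g\| \D \mu = \sum_{n=1}^{N'} \mu(A'_n)\, \|b_n\| \le C \sum_{n=1}^{N'} \mu(A'_n)\, \|a'_n\| + \sum_{n=1}^{N'} \mu(A'_n)\, \delta_n \le C \int \|f\| \D \mu + \epsilon.
\end{align}

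There is no real obstacle; the only thing one must be careful about is the disjointification step (so that $g$ genuinely takes values in $D^+$, rather than being a sum of overlapping positive terms whose cancellations one would have to track) and the choice of $\delta_n$ that absorbs all the slack into $\epsilon$. The hypothesis that $D^+$ is generating is used implicitly: it guarantees that the infimum in \eqref{eqn:C_bound_norm} is over a nonempty set, so that the required $b_n$ actually exist.
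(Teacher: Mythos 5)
Your proof is correct and follows essentially the same route as the paper: write $f$ with disjoint sets of finite measure, apply the $C$-absolutely dominating property to each value $a'_n$ with a small slack $\delta_n$, and sum; the paper simply takes the uniform slack $\epsilon/\kappa$ with $\kappa=\mu(\bigcup_n A_n)$ instead of your $\delta_n$. The remarks about disjointification and about generatingness ensuring the infimum in \eqref{eqn:C_bound_norm} is over a nonempty set match the paper's (implicit) treatment.
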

\begin{proof}
Write $f = \sum_{n=1}^N x_n \1_{A_n}$ with disjoint sets $A_1,\dots, A_N\in \cA$ of finite measure and  $x_1,\dots, x_N\in D$. 
Let $\kappa = \mu(\bigcup_{n=1}^N A_n)$ and assume $\kappa >0$. 
For each $n$, choose $a_n\in D^+$, $-a_n \le x_n \le a_n$, $\|a_n\|\le C\|x_n\| + \frac{\epsilon}{\kappa}$. 
Put $g = \sum_{n=1}^N a_n \1_{A_n}$.
Then $-g \le f \le g$ and $\int \|g\|\D \mu = \sum_{n=1}^N \|a_n\| \mu(A_n) \le \sum_{n=1}^N (C\|x_n\| +\frac{\epsilon}{\kappa}) \mu(A_n) = C\int \|f\|\D \mu + \epsilon$. 
\end{proof}

\begin{lemma}
\label{lemma:norm_bound_for_bochner}
Suppose $D^+$ is closed and generating and $C>0$. 
Then $\fB^+$ is closed and generating. 
If $\|\cdot\|$ is $C$-absolutely dominating, then so is $\|\cdot\|_\fB$. 
\end{lemma}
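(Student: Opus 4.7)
The closedness of $\fB^+$ is already recorded in \ref{obs:D_closed_cone_then_Bochner_int_functions_too}, so my work is to establish $C$-absolute domination of $\|\cdot\|_\fB$; the generating property of $\fB^+$ will then come for free, since Theorem \ref{theorem:ando} applied to $D$ provides some $C'>0$ with $\|\cdot\|$ being $C'$-absolutely dominating, and the same argument then gives $\|\cdot\|_\fB$ $C'$-absolutely dominating, which in particular witnesses directedness of $\fB$.

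The plan is a telescoping approximation that reduces the problem to Lemma \ref{lemma:simple_function_norm_close} term-by-term. Fix $f\in \fB$ and $\epsilon>0$, set $\eta = \epsilon/(4C)$, and first choose simple $t_n : X \to D$ with $\|f - t_n\|_\fB \le 2^{-n}\eta$. Telescope by setting $s_1 = t_1$ and $s_n = t_n - t_{n-1}$ for $n \ge 2$; a short triangle-inequality bookkeeping will give $\sum_n \|s_n\|_\fB \le \|f\|_\fB + 2\eta$, while $\sum_{n\le N} s_n = t_N$ still converges to $f$ in $\|\cdot\|_\fB$. Then, for each simple $s_n$, I apply Lemma \ref{lemma:simple_function_norm_close} with tolerance $2^{-n-1}\epsilon$ to obtain simple $g_n : X \to D^+$ satisfying $-g_n \le s_n \le g_n$ and $\|g_n\|_\fB \le C\|s_n\|_\fB + 2^{-n-1}\epsilon$.

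Adding the norm estimates yields $\sum_n \|g_n\|_\fB \le C\|f\|_\fB + \epsilon$, so by completeness of $\fB$ the series converges to some $g \in \fB$ with $\|g\|_\fB \le C\|f\|_\fB + \epsilon$. The candidate dominator has been built; what remains is to check that $g \ge 0$ and $-g \le f \le g$ hold $\mu$-almost everywhere. For this I pass to a subsequence along which the partial sums $\sum_{n\le N} g_n$ and $t_N = \sum_{n\le N} s_n$ both converge pointwise $\mu$-a.e.\ (available because both series are absolutely norm-summable in $\fB$). Each partial sum of the $g_n$ lies pointwise in $D^+$ and sandwiches the corresponding $t_N$; since $D^+$ is closed, so are all order intervals, and both the positivity of $g$ and the bound $-g \le f \le g$ survive the passage to the limit. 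This exhibits the required $g \in \fB^+$.

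The main obstacle is exactly this last transfer: one needs to dominate $f$ itself, not merely an approximating simple function, which forces working with a genuinely infinite series of dominators and pushing pointwise order information across an $L^1$-limit. The closedness of $D^+$ is essential here on two counts, to keep the order intervals closed and to identify the $\fB$-limit of a sequence of pointwise-positive functions as pointwise positive; without it, the telescoping yields norm control but no way to conclude the final sandwich inequality.
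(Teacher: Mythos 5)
Your proposal is correct and follows essentially the same route as the paper: telescope a sequence of simple approximants of $f$, apply Lemma \ref{lemma:simple_function_norm_close} to each telescoped term with geometrically decaying tolerances, sum the resulting positive simple dominators, and use closedness of $D^+$ to pass the sandwich $-\sum_{n\le N} g_n \le s_N \le \sum_{n\le N} g_n$ to the a.e.\ limit, with closedness of $\fB^+$ quoted from \ref{obs:D_closed_cone_then_Bochner_int_functions_too} and generation obtained via Theorem \ref{theorem:ando}. The only cosmetic difference is that the paper defines $g$ pointwise as $\sum_n g_n(x)$ off a null set rather than invoking completeness of $\fB$ and an a.e.\ subsequence identification, which amounts to the same thing.
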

\begin{proof}
$\fB^+$ is closed by \ref{obs:D_closed_cone_then_Bochner_int_functions_too}.
Assume that $\|\cdot\|$ is $C$-absolutely dominating. 
Let $f: X \rightarrow D$ be Bochner integrable and let $\epsilon>0$. 
We prove there exists a Bochner integrable $g: X \rightarrow D^+$ with $-g \le f \le g$ $\mu$-a.e. and 
$\int \|g\| \D \mu \le C (\int \|f\| \D \mu +\epsilon) $. 
Choose simple functions $s_1,s_2,\dots$ with $\int \|f-s_n\| \D \mu < \epsilon 2^{-n-1}$. 
Then $s_n \rightarrow f$ pointwise outside a $\mu$-null set $Y$. 

Define $f_1 = s_1$, $f_n = s_n - s_{n-1}$ for $n\in \{2,3,\dots\}$. Observe 
\begin{align}
\notag \int \|f_1\|\D \mu &< \int \|f\|\D \mu + \epsilon 2^{-2}, \\
\int \|f_n\|\D \mu &< \epsilon 2^{-n-1} + \epsilon 2^{-n-2} < \epsilon 2^{-n} \qquad (n\in \{2,3,\dots\}). 
\end{align}
For each $n$, choose a simple $g_n : X \rightarrow D^+$ with $-g_n \le f_n \le g_n$ such that 
\begin{align}
\notag \int \|g_1\| \D \mu &< C \left( \int \|f\| \D \mu + \epsilon 2^{-2} \right), \\
\int \|g_n\| \D \mu &< C\epsilon 2^{-n} \qquad (n\in \{2,3,\dots\}). 
\end{align}
As $\sumn \int \|g_n\| \D \mu <\infty$ there is a $\mu$-null set $Z\subset X$ for which 
\begin{align}
\sumn \| g_n(x)\| <\infty \qquad (x\in X \setminus Z). 
\end{align}
Put $X_0 := X \setminus (Y\cup Z)$. Define $g: X \rightarrow D^+$ by 
\begin{align}
g(x) =
\begin{cases}
 \sumn g_n(x) & x\in X_0, \\
 0 & x\notin X_0. 
\end{cases}
\end{align}
Then $g$ is Bochner integrable and $\int \|g\| \D \mu \le \sumn \int \|g_n\| \D \mu \le C(\int \|f\|\D \mu + \epsilon)$. 

Moreover,  for $x\in X_0$  we have $-g(x) \le \sum_{n=1}^N f_n(x) = s_N(x) \le g(x)$
 for all $N$, whereas $s_N(x) \rightarrow f(x)$ since $x\notin Y$. 
From the closedness of $D^+$ it follows that $-g \le f \le g$ on $X_0$. 
\end{proof}

In the following theorems (\ref{theorem:same_bound_norm_space_and_bochner},
\ref{theorem:directed_N_to_D_Bochner_implies_norm_bound_useful}, \ref{cor:bochner_directed_when_countable_pos_sets_implies_norm_bound} and \ref{obs:bochner_directed_finite_atoms}) we derive properties of $D$ from properties of $\fB$. 
In Theorem  \ref{theorem:closed_and_generating_for_bochner_and_space} we show that $D$ has a closed and generating cone if and only if $\fB$ does. 

\begin{theorem}
\label{theorem:same_bound_norm_space_and_bochner}
Assume $\fB$ is directed.
Let $C>0$ and suppose $\|\cdot\|_{\fB}$ is $C$-absolutely dominating.
Then $D$ is directed and $\|\cdot\|$ is $C$-absolutely dominating.
\end{theorem}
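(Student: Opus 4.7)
The plan is to encode each $x \in D$ as the simple $\fB$-function $f = x \1_A$ supported on a fixed set $A$ of finite positive measure, apply the $C$-absolute domination hypothesis in $\fB$ to produce a dominating $g \in \fB^+$, and then harvest an element $a \in D^+$ dominating $x$ from a suitable pointwise value of $g$ on $A$.

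First, using $\sigma$-finiteness and $\mu \ne 0$, fix $A \in \cA$ with $0 < \mu(A) < \infty$. Given $x \in D$ and $\delta > 0$, set $f = x \1_A$, so $f \in \fB$ with $\|f\|_\fB = \|x\|\mu(A)$. Choose $\eta \in (0, \delta \mu(A))$. By hypothesis, there exists $g \in \fB^+$ with $-g \le f \le g$ in $\fB$ and $\|g\|_\fB \le C\|f\|_\fB + \eta = C\|x\|\mu(A) + \eta$. After removing a $\mu$-null set $N$, we have $g(t) \ge 0$ for every $t \in X \setminus N$, and $-g(t) \le x \le g(t)$ in $D$ for every $t \in A \setminus N$. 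In particular, for any such $t$ we can write $x = g(t) - (g(t) - x)$ with both summands in $D^+$, so $D$ is directed.

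To pin down the norm, define
\begin{align}
E_\delta = \{ t \in A \setminus N : \|g(t)\| > C\|x\| + \delta \}.
\end{align}
Markov's inequality together with the choice of $\eta$ gives
\begin{align}
(C\|x\| + \delta)\, \mu(E_\delta) \le \int_{E_\delta} \|g\| \D \mu \le \|g\|_\fB \le C\|x\|\mu(A) + \eta < (C\|x\| + \delta)\,\mu(A),
\end{align}
so $\mu(E_\delta) < \mu(A)$ and there exists $t_0 \in (A \setminus N) \setminus E_\delta$. Setting $a := g(t_0)$ yields $a \in D^+$ with $-a \le x \le a$ and $\|a\| \le C\|x\| + \delta$. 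Since $\delta > 0$ was arbitrary, $\|\cdot\|$ is $C$-absolutely dominating.

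The main difficulty is the averaging step: the hypothesis provides only the integral bound $\|g\|_\fB \le C\|x\|\mu(A) + \eta$, whereas we need a pointwise bound on $\|g(t_0)\|$ at a single $t_0 \in A$ that simultaneously realises the order sandwich $-g(t_0) \le x \le g(t_0)$. Finiteness of $\mu(A)$ is what makes Markov's inequality applicable, and the quantitative slack $\eta < \delta\mu(A)$ is precisely what guarantees $\mu(E_\delta) < \mu(A)$, leaving room inside $A$ for such a $t_0$.
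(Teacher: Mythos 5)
Your proof is correct and follows essentially the same route as the paper: test the hypothesis on $x\1_A$ for a set $A$ of finite positive measure, obtain a dominating $g\in\fB^+$ with nearly optimal $\|g\|_\fB$, and use a Markov-type estimate to find a point $t_0\in A$ where the order sandwich $-g(t_0)\le x\le g(t_0)$ and the norm bound hold simultaneously. The only cosmetic difference is that you use additive slack $\delta$, $\eta$ where the paper uses a multiplicative constant $C'>C$.
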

\begin{proof}
Let $x\in D$ and $x\ne 0$. 
Let $A\in \cA$ with $0<\mu(A) <\infty$.
Then $x\1_A\in \fB$, $\|x\1_A\|_{\fB} = \mu(A) \|x\| $. 
Let $C'>C$. 
There is a $g\in \fB$ with $-g \le x\1_A \le g$, 
$\int \|g(t)\| \D \mu(t) \le C' \|x\1_A\|_{\fB}$. 
Then 
$\int_A \|g(t)\| \D \mu(t) \le C' \mu(A) \|x\|$, 
so $\mu\{t\in A : \|g(t)\|> C' \|x\|\} < \mu(A)$. 
In particular, there is a $t\in A$ with $\|g(t)\| \le C' \|x\|$ and $-g(t) \le x \le g(t)$. 
\end{proof}

\begin{theorem}
\label{theorem:directed_N_to_D_Bochner_implies_norm_bound_useful}
Let $D$ be an ordered Banach space such that the Bochner integrable functions $\N \rightarrow D$ form a directed space. 
Then $D$ is directed and $\|\cdot\|$ is absolutely dominating. 
\end{theorem}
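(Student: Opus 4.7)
The plan is to exploit the observation that with counting measure on $\N$, a function $y : \N \to D$ is Bochner integrable precisely when $\sum_n \|y(n)\| < \infty$, and the ordering on $\fB$ is coordinatewise. This is immediate from \ref{obs:facts_on_Bochner}\ref{item:Bochner_integrable_when_measurable_and_norm_integrable}, since every function $\N \to D$ is trivially Borel measurable with countable (hence separable) range.

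Directedness of $D$ then follows by a one-line argument: for $x \in D$ the sequence $(x,0,0,\dots)$ belongs to $\fB$, so the hypothesis yields $(a_n)_n \in \fB^+$ absolutely dominating it, and its first entry $a_1 \in D^+$ satisfies $-a_1 \le x \le a_1$.

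For the absolute domination claim I would argue by contradiction. If the norm is not absolutely dominating, then for each $n$ there is $x_n \in D$ with
\[
M_n := \inf\{\|a\| : a \in D^+,\ -a \le x_n \le a\} > n\|x_n\|;
\]
necessarily $x_n \ne 0$, for otherwise $M_n = 0$ via $a = 0$. Let $\lambda_n := 1/(n^2 \|x_n\|)$ and $y_n := \lambda_n x_n$, so $\|y_n\| = 1/n^2$ and hence $(y_n)_n \in \fB$. Directedness of $\fB$ provides $(a_n)_n \in \fB^+$ with $-a_n \le y_n \le a_n$ coordinatewise; rescaling by $1/\lambda_n$ shows that $a_n/\lambda_n \in D^+$ dominates $x_n$ in absolute value, so $\|a_n\|/\lambda_n \ge M_n > n\|x_n\|$, giving $\|a_n\| > 1/n$. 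This contradicts $\sum_n \|a_n\| < \infty$.

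The only genuine choice is the scaling exponent: any $\lambda_n \propto n^{-\alpha}\|x_n\|^{-1}$ with $\alpha \in (1,2]$ makes $(\|y_n\|)_n$ summable while forcing $\sum_n \|a_n\|$ to dominate a divergent series, and $\alpha = 2$ is the cleanest pick. I do not anticipate any serious obstacle beyond verifying the counting-measure identification of $\fB$, which is routine from \ref{obs:facts_on_Bochner}; note in particular that Theorem \ref{theorem:same_bound_norm_space_and_bochner} cannot be applied directly, as we are not given that $\|\cdot\|_{\fB}$ is absolutely dominating to begin with.
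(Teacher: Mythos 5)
Your proof is correct and follows essentially the same route as the paper: the paper also argues by contradiction, choosing witnesses $x_n$ of the failure of absolute domination (normalised so that $\|x_n\|=2^{-n}$ and $\inf\{\|a\|:a\in D^+,\ -a\le x_n\le a\}\ge 1$), noting that $n\mapsto x_n$ is then Bochner integrable, and contradicting $\sumn\|a_n\|<\infty$ via directedness of the space of Bochner integrable functions $\N\rightarrow D$. Your scaling $\|y_n\|=1/n^{2}$ with dominator norms exceeding $1/n$ is merely a different (equally valid) normalisation, and your explicit one-line check that $D$ is directed spells out a step the paper treats as immediate.
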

\begin{proof}
$D$ is directed. In case $\|\cdot\|$ is not absolutely dominating, there exist $x_1,x_2,\dots \in D$ such that for every $n$ 
\begin{align}
\notag &2^n \|x_n\| \le \inf \{ \|a\|: a\in D^+, - a \le x_n \le a\}
\end{align}
and $\|x_n\| = 2^{-n}$. Then $n\mapsto x_n$ is Bochner integrable, so, by our assumption, there exist $a_n \in D^+$ with $-a_n \le x_n \le a_n$ for all $n$ and $ \sumn \|a_n\|<\infty$ which is false.
\end{proof}

\begin{corollary}
\label{cor:bochner_directed_when_countable_pos_sets_implies_norm_bound}
Suppose there exist disjoint $A_1,A_2,\dots$ in $\cA$ with $0<\mu(A_n)<\infty$ for all $n$. Suppose $\fB$ is directed. Then $D$ is directed and $\|\cdot\|$ is absolutely dominating. 
\end{corollary}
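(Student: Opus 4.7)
The plan is to reduce the corollary to Theorem \ref{theorem:directed_N_to_D_Bochner_implies_norm_bound_useful}, which already gives exactly the desired conclusion once one knows that the Bochner integrable functions $\N \to D$ (with counting measure) form a directed space. So the whole task is to push the assumed directedness of $\fB$ onto this space of summable sequences, using the disjoint $A_n$ as a bridge.

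First I would transport an $\N$-indexed candidate $h: \N \to D$, i.e.\ a sequence with $\sumn \|h(n)\| < \infty$, to a function $f: X \to D$ by setting $f(x) = \mu(A_n)^{-1} h(n)$ on $A_n$ and $f(x) = 0$ outside $\bigcupn A_n$. The norm-convergent series of simple functions $\sumn \mu(A_n)^{-1} h(n) \1_{A_n}$ shows $f \in \fB$ with $\|f\|_\fB = \sumn \|h(n)\|$. Directedness of $\fB$ then produces $g \in \fB^+$ with $-g \le f \le g$ in $\fB$, i.e.\ $\mu$-a.e.\ pointwise.

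Next I would slice $g$ back: on each $A_n$ the three conditions $g(x) \ge 0$, $-g(x) \le \mu(A_n)^{-1} h(n) \le g(x)$, and (by a Markov-type estimate) $\|g(x)\| \le 2\mu(A_n)^{-1} \int_{A_n} \|g\| \D \mu$ hold off a $\mu$-null set / on a subset of $A_n$ of measure at least $\mu(A_n)/2$, so one can pick a single $x_n \in A_n$ satisfying all three. Defining $k(n) := \mu(A_n) g(x_n)$ yields $k(n) \in D^+$, $-k(n) \le h(n) \le k(n)$, and $\sumn \|k(n)\| \le 2 \int \|g\| \D \mu < \infty$, so $k$ is Bochner integrable on $\N$ and dominates $\pm h$. (If $\int_{A_n} \|g\| \D \mu = 0$, then $h(n) = 0$ and one simply sets $k(n) = 0$.) This establishes directedness of the Bochner integrable functions $\N \to D$, and Theorem \ref{theorem:directed_N_to_D_Bochner_implies_norm_bound_useful} then delivers the conclusion.

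The main obstacle, such as it is, is bookkeeping around the various $\mu$-null sets: one needs a single sample point $x_n$ inside each $A_n$ at which both the order inequality on $h(n)$ and a weighted norm bound on $g$ hold simultaneously. The weighted norm bound is precisely what forces $\sumn \|k(n)\| < \infty$, which in turn is what makes the sequence $k$ Bochner integrable on $\N$ and hence allows the invocation of Theorem \ref{theorem:directed_N_to_D_Bochner_implies_norm_bound_useful}.
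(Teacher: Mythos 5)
Your proof is correct and follows essentially the same route as the paper: both reduce the corollary to Theorem \ref{theorem:directed_N_to_D_Bochner_implies_norm_bound_useful} by identifying summable sequences $\N\rightarrow D$ with Bochner integrable functions supported on $\bigcup_n A_n$ (the paper's map $f\mapsto\sum_n f(n)\1_{A_n}$, in your normalised, isometric form). Your Markov-type sampling of points $x_n\in A_n$ merely spells out the directedness transfer that the paper's one-line proof leaves implicit, and it does so correctly, without needing $D^+$ to be closed.
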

\begin{proof}
This follows from Theorem \ref{theorem:directed_N_to_D_Bochner_implies_norm_bound_useful} since $f\mapsto \sumn f(n) \1_{A_n}$ forms an isometric order preserving isomorphism from the Bochner integrable functions $\N \rightarrow D$ into $\fB$.
\end{proof}

\begin{obs}
\label{obs:bochner_directed_finite_atoms}
Whenever there do not exist $A_1,A_2,\dots$ as in Corollary \ref{cor:bochner_directed_when_countable_pos_sets_implies_norm_bound}, then $X= A_1 \cup \cdots \cup A_N$, where $A_1,\dots, A_N$  are disjoint atoms. 
Let $\alpha_n = \mu(A_n) \in (0,\infty)$. 
Define a norm $\|\cdot\|_N$ on $D^N$ by $\|x\|_N = \sum_{n=1}^N \|x_n\|$. 
Then $T: \fB \rightarrow D^N$ defined by $T(\sum_{n=1}^N x_n \1_{A_n}) = (\alpha_1 x_1, \dots, \alpha_N x_N)$, is an isometric order preserving isomorphism. Therefore,
\begin{align}
D^+ \mbox{ is generating } \iff (D^N)^+ \mbox{ is generating } \iff  \fB^+ \mbox{ is generating}.
\end{align}
\end{obs}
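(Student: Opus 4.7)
The plan is to justify the three assertions compacted into this observation: the decomposition $X=A_1\cup\cdots\cup A_N$ into atoms, the claim that $T$ is an isometric order preserving isomorphism, and the resulting chain of iffs about generating cones.

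For the structural decomposition I would exploit $\sigma$-finiteness to write $X$ as a disjoint countable union of finite-measure pieces $B_k$. If infinitely many of these had positive measure, they would form the forbidden sequence, so we may reduce to $\mu(X)<\infty$ after discarding a null set. Then I would argue that any non-atomic set of positive finite measure splits into two pieces of positive measure; iterating this splitting on a surviving non-atom would once again produce an infinite disjoint sequence of positive-measure sets, contradicting the hypothesis. So the process terminates and $X$ is, modulo a null set, a finite disjoint union of atoms $A_1,\dots,A_N$.

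To make $T$ meaningful I need every $f\in\fB$ to have the stated form $\sum_{n=1}^N x_n\1_{A_n}$ a.e. The key substep is that Bochner integrable functions are essentially constant on atoms. I would use strong measurability to pick simple functions $s_k\to f$ pointwise a.e.; on each atom $A_n$ every $s_k$ is a.e.\ constant (exactly one of its finitely many level sets can have full measure in $A_n$), and a.e.\ convergence on $A_n$ forces those constants to converge in $D$ to a single value $x_n$, giving $f=x_n$ a.e.\ on $A_n$. With this representation $T$ is a linear bijection; isometry is just
\begin{align*}
\|f\|_\fB=\sum_{n=1}^N\mu(A_n)\|x_n\|=\sum_{n=1}^N\|\alpha_n x_n\|=\|Tf\|_N,
\end{align*}
and the order-isomorphism claim reduces to the fact that an a.e.\ inequality between constant $D$-valued functions on an atom of positive measure is a genuine inequality of the constants, combined with $\alpha_n>0$ preserving order coordinatewise.

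Finally the chain of iffs is cosmetic: $(D^N)^+=(D^+)^N$, so $D^+$ is generating iff $(D^N)^+$ is (decompose coordinatewise and, conversely, read off any one coordinate of a decomposition in $D^N$), and the linear order isomorphism $T$ carries $\fB^+$ onto $(D^N)^+$, so the two generatingness conditions coincide. The main obstacle in the whole argument is the representation step—verifying through strong measurability that Bochner integrable $D$-valued functions are a.e.\ constant on atoms; once that is in hand, the rest is bookkeeping.
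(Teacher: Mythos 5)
Your proposal is correct and follows exactly the argument the paper intends: the observation is stated without a separate proof, and the details you supply (reducing to finitely many atoms via the absence of an infinite disjoint family of positive finite measure sets, the a.e.-constancy of strongly measurable functions on atoms to make $T$ well defined and bijective, then the isometry, order-isomorphism and coordinatewise generating-cone bookkeeping) are precisely the standard steps being taken for granted. No gaps worth flagging; at most you could note that the null remainder of $X$ can be absorbed into one of the atoms so the decomposition holds exactly, not just modulo a null set.
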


\begin{theorem}
\label{theorem:closed_and_generating_for_bochner_and_space}
 $D^+$ is closed and generating if and only if $\fB^+$ is. 
Moreover, if $D^+$ is closed and generating and $C>0$, then $\|\cdot\|$ is $C$-absolutely dominating if and only if $\|\cdot\|_{\fB}$ is. 
\end{theorem}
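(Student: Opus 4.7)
The plan is to assemble this theorem from the earlier lemmas and observations listed in \ref{obs:comming_results_on_relation_Banach_space_and_Bochner_space}; essentially all the real work has been done, and the statement is a packaging of those items. I expect no hard obstacle, only a clean case split when transferring generatingness from $\fB^+$ back to $D^+$.

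First I would handle the forward direction of the main equivalence. If $D^+$ is closed and generating, then Lemma~\ref{lemma:norm_bound_for_bochner} directly yields that $\fB^+$ is closed and that it is generating (this is implicit in that lemma, since the simple function $g$ it produces witnesses $f = \tfrac{1}{2}(g+f) - \tfrac{1}{2}(g-f)$ as a difference of positive Bochner integrable functions). For the reverse direction, assume $\fB^+$ is closed and generating. Closedness of $D^+$ is immediate from the second half of \ref{obs:D_closed_cone_then_Bochner_int_functions_too}. For generatingness of $D^+$ I would split on the measure space: if there is a disjoint sequence $A_1,A_2,\dots \in \cA$ with $0<\mu(A_n)<\infty$, then since $\fB$ is directed (its cone is generating), Corollary~\ref{cor:bochner_directed_when_countable_pos_sets_implies_norm_bound} gives that $D$ is directed (and even that $\|\cdot\|$ is absolutely dominating, which will be useful for the next step). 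In the remaining case, \ref{obs:bochner_directed_finite_atoms} provides an isometric order isomorphism $\fB \cong D^N$ under which $\fB^+$ generating is equivalent to $D^+$ generating.

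For the ``moreover'' part, assume throughout that $D^+$ is closed and generating. If $\|\cdot\|$ is $C$-absolutely dominating, then Lemma~\ref{lemma:norm_bound_for_bochner} immediately gives that $\|\cdot\|_\fB$ is $C$-absolutely dominating. Conversely, suppose $\|\cdot\|_\fB$ is $C$-absolutely dominating; by Definition~\ref{def:norm_C-absolutely_dominating} this tacitly includes the assumption that $\fB$ is directed. Theorem~\ref{theorem:same_bound_norm_space_and_bochner} then yields that $D$ is directed and that $\|\cdot\|$ is $C$-absolutely dominating, completing the equivalence with the same constant $C$.

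The only mild subtlety is bookkeeping: one has to recognise that the absolutely-dominating property was defined only for directed spaces, so in the converse direction of the ``moreover'' one obtains directedness of $\fB$ for free and then invokes Theorem~\ref{theorem:same_bound_norm_space_and_bochner}; and in the main equivalence one has to case-split on the measure space to cover both Corollary~\ref{cor:bochner_directed_when_countable_pos_sets_implies_norm_bound} and \ref{obs:bochner_directed_finite_atoms}. No new estimates are needed.
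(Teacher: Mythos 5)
Your proof is correct, and it packages the theorem from the earlier results much as the paper does, but with one genuine divergence in the backward direction of the main equivalence. The paper's proof cites only Lemma~\ref{lemma:norm_bound_for_bochner} and Theorem~\ref{theorem:same_bound_norm_space_and_bochner}: to get $D^+$ generating from $\fB^+$ closed and generating one first invokes And\^o's theorem (Theorem~\ref{theorem:ando}) on $\fB$ to produce a constant $C$ making $\|\cdot\|_\fB$ $C$-absolutely dominating, and then Theorem~\ref{theorem:same_bound_norm_space_and_bochner} yields that $D$ is directed (closedness of $D^+$ coming from \ref{obs:D_closed_cone_then_Bochner_int_functions_too}, as in your argument). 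You instead bypass And\^o on $\fB$ by splitting on the measure space and using Corollary~\ref{cor:bochner_directed_when_countable_pos_sets_implies_norm_bound} in the non-atomic-enough case and the isomorphism $\fB\cong D^N$ of \ref{obs:bochner_directed_finite_atoms} in the purely atomic case; this is valid (note $\mu\ne 0$ guarantees $N\ge 1$) and is more elementary in that it needs only the explicit counterexample construction of Theorem~\ref{theorem:directed_N_to_D_Bochner_implies_norm_bound_useful}, at the price of a case analysis that the paper's route avoids. Your treatment of the forward direction and of the ``moreover'' part coincides with the paper's. Two cosmetic points: the $g$ produced in the proof of Lemma~\ref{lemma:norm_bound_for_bochner} is Bochner integrable but not simple (the simple approximant is in Lemma~\ref{lemma:simple_function_norm_close}), though this is immaterial since the lemma's statement already asserts that $\fB^+$ is generating; and your decomposition $f=\tfrac12(g+f)-\tfrac12(g-f)$ is a fine, if redundant, witness of that fact.
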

\begin{proof}
This follows from Lemma \ref{lemma:norm_bound_for_bochner} and Theorem \ref{theorem:same_bound_norm_space_and_bochner}. 
%
%
\end{proof}

\section{An extension of the Bochner integral}
\label{section:generalised_Bochner}

We present the definition of a Banach cover (Definition \ref{def:Banach_cover}), some examples, and use this notion to extend the Bochner integral to functions with values in such a vector space (\ref{obs:definition_B_integral}). 

The next result follows by definition of the Bochner integral or by \ref{obs:facts_on_Bochner}\ref{item:Bochner_and_closed_map}. 

\begin{theorem}
\label{theorem:integrals_subbanach_spaces_agree}
Let $D_1$ and $D$ 
be Banach spaces and suppose $D_1\subset D$ and the inclusion map is continuous. 
Suppose $f: X \rightarrow D$ has values in $D_1$ and is Bochner integrable as map $X\rightarrow D_1$ with integral $I$. Then $f$ is Bochner integrable as a map $X \rightarrow D$ with integral $I$. 
\end{theorem}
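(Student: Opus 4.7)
The plan is to invoke Fact~\ref{obs:facts_on_Bochner}\ref{item:Bochner_and_closed_map} applied to the inclusion $\iota : D_1 \hookrightarrow D$, which is linear and, by assumption, continuous. Writing $f = \iota \circ f$ when viewed as a map $X \to D$, the cited fact yields at once that $\iota \circ f \in \fB_D$ and
\[
\fb_D(\iota \circ f) = \iota\bigl( \fb_{D_1}(f) \bigr) = \iota(I) = I.
\]
So strictly speaking nothing needs proving beyond identifying the right theorem.

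For a from-scratch argument, I would fix $M > 0$ with $\|x\|_D \le M \|x\|_{D_1}$ for all $x \in D_1$ (continuity of the inclusion) and pick simple functions $s_n : X \to D_1$ with $\int \|f - s_n\|_{D_1} \D \mu \to 0$. Every such $s_n$ is tautologically also a simple function $X \to D$, and
\[
\int \|f(x) - s_n(x)\|_D \D \mu(x) \le M \int \|f(x) - s_n(x)\|_{D_1} \D \mu(x) \longrightarrow 0,
\]
so $f$ is Bochner integrable as a map $X \to D$.

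It remains to identify the integral. The point is that the simple-function integral $\varphi(s_n) = \sum_i \mu(A_{n,i}) a_{n,i}$ is a finite linear combination whose summands lie in $D_1 \subset D$, so it represents the same element regardless of which space one computes in. Since $\varphi(s_n) \to I$ in $D_1$ and $\iota$ is continuous, $\varphi(s_n) \to I$ in $D$ as well, hence $\fb_D(f) = I$.

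There is no genuine obstacle: the only content is the observation that the simple-function integral $\varphi$ commutes with continuous linear maps (in particular with $\iota$), which is immediate from linearity. The statement is essentially a naturality remark phrased at the level of integrable functions.
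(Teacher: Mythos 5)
Your proposal is correct and matches the paper, which disposes of this theorem with the one-line remark that it ``follows by definition of the Bochner integral or by \ref{obs:facts_on_Bochner}\ref{item:Bochner_and_closed_map}''; your two arguments are precisely these two routes, spelled out. Nothing further is needed.
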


\begin{definition}
\label{def:Banach_cover}
Let $E$ be a vector space. 
Suppose $\B$ is a collection of Banach spaces whose underlying vector spaces are linear subspaces of $E$. 
$\B$ is called a 
\emph{Banach cover}
of $E$ if $\bigcup \B = E$ and 
for all $D_1$ and $D_2$ in $\B$ there exists a  $D\in \B$ with $D_1, D_2\subset D$ such that both inclusion maps $D_1\rightarrow D$ and $D_2 \rightarrow D$ are continuous. 

For a $D\in \B$, we write $\|\cdot\|_D$ for its norm if not indicated otherwise. 

If $E$ is an ordered vector space, then an \emph{ordered Banach cover} is a Banach cover whose elements are seen as ordered subspaces of $E$. 
\end{definition}

\begin{obs}
\label{obs:pedantry_about_norm}
A bit of pedantry: strictly speaking, a Banach space is a couple $(D,\|\cdot\|)$ consisting of a vector space $D$ and a norm $\|\cdot\|$. 
One usually talks about the ``Banach space $D$'', the norm being understood. 
Mostly, we adopt that convention but not always. 
In the context of the above definitions one has to be careful. 
A Banach cover may contain several Banach spaces with the same underlying vector space, so that a formula like ``$D_1,D_2 \subset D$'' really is ambiguous. 
What we mean is only an inclusion relation between the vector spaces and no connection between the norms is assumed a proiri. 

However, suppose $(D_1,\|\cdot\|_1)$ and $(D_2,\|\cdot\|_2)$ are elements of a Banach cover $\B$ and $D_1=D_2$. 
There is a Banach space $(D,\|\cdot\|_D)$ in $\B$ with $D_1, D_2 \subset D$ and with continuous inclusion maps. 
If a sequence $(x_n)_{n\in\N}$ in $D_1 (=D_2)$ is $\|\cdot\|_1$-convergent to $a$ and $\|\cdot\|_2$-convergent to $b$, then it is $\|\cdot\|_D$-convergent to $a$ and $b$, so $a=b$. 
Hence, by the Closed Graph Theorem the norms $\|\cdot\|_1$ and $\|\cdot\|_2$ are equivalent. 

Similarly, if $(D_1,\|\cdot\|_1)$ and $(D_2,\|\cdot\|_2)$ are elements of a Banach cover and $D_1 \subset D_2$, then the inclusion map $D_1 \rightarrow D_2$ automatically is $\|\cdot\|_1$-$\|\cdot\|_2$-continuous. 
\end{obs}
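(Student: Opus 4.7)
The plan is to handle the two assertions of the observation separately, both via the Closed Graph Theorem applied inside a common container space supplied by the Banach cover axiom.

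First I would treat the case $D_1=D_2$ as vector spaces, with possibly different norms $\|\cdot\|_1$ and $\|\cdot\|_2$. By the definition of a Banach cover, I pick a $(D,\|\cdot\|_D)\in\B$ containing both $D_1$ and $D_2$ such that the inclusion maps $(D_1,\|\cdot\|_1)\to(D,\|\cdot\|_D)$ and $(D_2,\|\cdot\|_2)\to(D,\|\cdot\|_D)$ are continuous. To apply the Closed Graph Theorem to the identity map $\mathrm{id}\colon (D_1,\|\cdot\|_1)\to(D_2,\|\cdot\|_2)$, I take a sequence $(x_n)$ in $D_1$ with $x_n\to a$ in $\|\cdot\|_1$ and $x_n=\mathrm{id}(x_n)\to b$ in $\|\cdot\|_2$. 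Continuity of the two inclusions into $D$ gives $x_n\to a$ and $x_n\to b$ in $\|\cdot\|_D$; uniqueness of limits in the Banach space $D$ forces $a=b$, so the graph of $\mathrm{id}$ is closed and the Closed Graph Theorem yields continuity of $\mathrm{id}\colon(D_1,\|\cdot\|_1)\to(D_2,\|\cdot\|_2)$. The same argument with the roles of $\|\cdot\|_1$ and $\|\cdot\|_2$ swapped gives continuity of the inverse, whence the two norms are equivalent.

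For the second assertion, where $D_1\subset D_2$ as vector spaces, I would run exactly the same template. Pick a common $(D,\|\cdot\|_D)\in\B$ with $D_1,D_2\subset D$ and continuous inclusions, and consider the linear inclusion $\iota\colon(D_1,\|\cdot\|_1)\to(D_2,\|\cdot\|_2)$. If $x_n\to a$ in $\|\cdot\|_1$ and $\iota(x_n)=x_n\to b$ in $\|\cdot\|_2$, then both $x_n\to a$ and $x_n\to b$ in $\|\cdot\|_D$, forcing $a=b=\iota(a)$. Thus $\iota$ has closed graph, and the Closed Graph Theorem delivers continuity.

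The only delicate point is conceptual rather than technical: one must remember that the inclusion $D_i\hookrightarrow D$ provided by the cover is continuous by assumption, so that $\|\cdot\|_i$-convergence passes to $\|\cdot\|_D$-convergence. Once that is in hand, both statements reduce to a one-line application of the Closed Graph Theorem between Banach spaces, with uniqueness of limits in $(D,\|\cdot\|_D)$ supplying the closed-graph property.
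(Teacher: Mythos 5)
Your proposal is correct and follows essentially the same route as the paper: embed both spaces in a common $(D,\|\cdot\|_D)\in\B$ via the cover axiom, use uniqueness of limits in $D$ to get the closed-graph property, and conclude with the Closed Graph Theorem (applied in both directions, respectively to the inclusion, for the two assertions). No gaps.
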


\begin{example}
\label{example:principal_ideals_banach_cover}
Let $E$ be a  uniformly complete Riesz space. 
The set of principal ideals $\B=\{(E_u,\|\cdot\|_u): u\in E^+, u \ne 0\}$ is an ordered Banach cover of $E$: for $u,v \in E^+$ with $u,v>0$ one has $E_u, E_v \subset E_{u+v}$ and $\|\cdot\|_{u+v}\le \|\cdot\|_u$ on $E_u$. 
\end{example}

\begin{example}
\label{example:measurable_functions_and_positive_cover}
Let $(Y,\cB,\nu)$ be a complete $\sigma$-finite measure space. 
Let $M$ be the space of classes of measurable functions $Y \rightarrow \R$.
A function $\rho : M\rightarrow [0,\infty]$ is called an 
\emph{function norm} if (i) $\rho(f) =0 \iff f=0$ a.e., (ii) $\rho(\alpha f) = |\alpha| \rho(f)$ (where $0 \cdot \infty =0$), (iii) $\rho(|f|) =\rho(f)$, (iv) $\rho(f+g) \le \rho(f) + \rho(g)$, (v) $0\le f \le g$ a.e. implies $\rho(f) \le \rho(g)$ for $f,g\in M$ and $\alpha \in \R$. 
For such function norm $\rho$ the set
\begin{align}
L_\rho = \{ f\in M : \rho(f) <\infty\}
\end{align}
is a normed Riesz space called a \emph{K\"{o}the space} (see \cite[Ch. III \S 18]{JoRo77IntroRiesz} or \cite[Ch. 1 \S 9]{LuZa71}). 

A K\"{o}the space $L_\rho$ is complete, i.e., a Banach lattice if and only if for all $u_n \in L_\rho^+$ with $\sumn \rho(u_n) <\infty$,  $\sumn u_n$ is an element of $L_\rho$ (\cite[Theorem 19.3]{JoRo77IntroRiesz}).  
Examples of complete K\"{o}the spaces are Orlicz spaces (\cite[\S 20]{JoRo77IntroRiesz}). 
In particular, the Banach spaces $L^p(\nu)$ for $p \ge 1$ are K\"{o}the spaces. 
We introduce other examples: 

For $w\in M^+$ with $w>0$ a.e. define $\rho_w : M \rightarrow [0,\infty]$ by 
\begin{align}
\rho_w(f) := \int |f| w  \D \nu.
\end{align}
Then $\rho_w$ is a function norm. 
Both the set $\{ L_{\rho_w} : w\in M^+, w>0 \mbox{ a.e.}\}$ 
and the set of all complete K\"{o}the spaces are Banach covers of $M$ (see Appendix \ref{section:appendix_kothe_spaces}). 
\end{example}

\begin{obs}
\label{obs:definition_B_integral}
Let $E$ be a vector space with a Banach cover $\B$. 
Let $(X,\cA,\mu)$ be a complete $\sigma$-finite measure space, $\mu \ne 0$. 
\begin{enumerate}[label=(\arabic*)]
\item For $D\subset \B$ denote by $B_D$ the vector space of all Bochner integrable functions $X\rightarrow D$, and, by $b_D$ the Bochner integral $B_D \rightarrow D$. 
\item 
\label{item:almost_everywhere_equal_functions_with_values_in_different_B_spaces}
Let $D_1,D_2\in \B$, $f_1 \in B_{D_1}$, $f_2\in B_{D_2}$, $f_1 = f_2$ $\mu$-a.e. 
Then $b_{D_1} (f_1) = b_{D_2}(f_2)$. 
\emph{Proof.} Choose $D\in \B$ as in Definition \ref{def:Banach_cover}. 
Then $f_1,f_2\in B_D$ and $b_{D_1}(f_1) = b_D (f_1) = b_D (f_2) = b_{D_2}(f_2)$. 
\item If $D_1,D_2\in \B$ have the same underlying vector space, then $B_{D_1}=B_{D_2}$ since the identity map $D_1 \rightarrow D_2$ is a homeomorphism (see \ref{obs:pedantry_about_norm}).
\item We call a function $f: X \rightarrow E$ \emph{$\B$-integrable} if there is a $D\in \B$ such that $f$ is $\mu$-a.e. equal to some element of $B_D$. 
\item By $\fU$ we indicate the vector space of all $\mu$-equivalence classes of $\B$-integrable functions. 
\end{enumerate}
For $D\in \B$ we have a natural map $T_D: B_D \rightarrow \fU$, assigning to every element of $B_D$ its $\mu$-equivalence class. 
The space $T_D(B_D)$ is a Banach space, which we indicate by $\fB_D$.\footnote{Even though we use the same notation as in Section \ref{section:bochner_integral_on_ordered_Banach_spaces}, see \ref{obs:assumptions_space_useful}, the meaning of $\fB_D$ is slightly different.} 
We write $\fb_D$ for the map $\fB_D \rightarrow D$ determined by 
\begin{align}
\fb_D (T_D(f)) = b_D(f) \qquad (f\in B_D). 
\end{align}
$\fU$ is the union of the sets $\fB_D$. 
By \ref{item:almost_everywhere_equal_functions_with_values_in_different_B_spaces} there is a unique $\fu : \fU \rightarrow E$ determined by 
\begin{align}
\fu( f) = \fb_D(f) \qquad (D\in \B, f\in \fB_D). 
\end{align}
The above leads to the following theorem. 
\end{obs}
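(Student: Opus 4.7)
The observation comprises several claims requiring verification: the agreement of integrals asserted in item (2), the coincidence $B_{D_1}=B_{D_2}$ asserted in item (3), the Banach space structure on $\fB_D$, and the existence and uniqueness of $\fu$. My plan is to handle each in sequence; the key consistency step is item (2), and the well-definedness of $\fu$ then follows as a direct corollary.

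For item (2), I would argue exactly as the inline sketch indicates. Given $D_1,D_2\in\B$ with $f_1\in B_{D_1}$, $f_2\in B_{D_2}$ and $f_1=f_2$ $\mu$-a.e., the defining property of a Banach cover furnishes $D\in\B$ with $D_1, D_2\subset D$ such that both inclusion maps are continuous. Applying \ref{obs:facts_on_Bochner}\ref{item:Bochner_and_closed_map} to each inclusion shows that $f_1$ and $f_2$ are Bochner integrable as maps $X\to D$, with $b_D(f_j) = b_{D_j}(f_j)$ for $j=1,2$. Since Bochner integrals of $\mu$-a.e.\ equal functions coincide (immediate from the simple-function approximation \eqref{eqn:def_bochner}), $b_D(f_1) = b_D(f_2)$, yielding $b_{D_1}(f_1) = b_{D_2}(f_2)$. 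This is the main obstacle, and it dissolves as soon as one invokes the directedness built into the cover.

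For item (3), suppose $(D_1,\|\cdot\|_1)$ and $(D_2,\|\cdot\|_2)$ in $\B$ share the same underlying vector space. The argument in \ref{obs:pedantry_about_norm} (a Closed Graph Theorem application inside a common enclosing $D$ supplied by the cover) shows that $\|\cdot\|_1$ and $\|\cdot\|_2$ are equivalent. Since Bochner integrability depends only on the convergence condition \eqref{eqn:def_bochner}, which is preserved under passage to an equivalent norm on the target, $B_{D_1}=B_{D_2}$, and by item (2) the two integrals agree on this common space.

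For the Banach space structure on $\fB_D$, I plan to identify $T_D$ with the canonical quotient map from the seminormed space $B_D$ (seminorm $f\mapsto \int \|f\|_D \D\mu$) by the closed subspace $N_D := \{f\in B_D : f=0 \text{ $\mu$-a.e.}\}$, which is precisely the null-space of this seminorm. Completeness is supplied by \ref{obs:facts_on_Bochner}\ref{item:Bochner_Banach}, so $\fB_D = T_D(B_D)$ inherits a Banach norm, and $\fb_D$ descends well-defined from $b_D$ because $b_D$ vanishes on $N_D$. Finally, existence and uniqueness of $\fu$ reduce directly to item (2): any $f\in\fU$ lies in some $\fB_D$, and if $f\in\fB_{D_1}\cap\fB_{D_2}$ with representatives $g_j\in B_{D_j}$ (necessarily $g_1=g_2$ $\mu$-a.e.), then $\fb_{D_1}(f) = b_{D_1}(g_1) = b_{D_2}(g_2) = \fb_{D_2}(f)$, so the prescription $\fu(f) := \fb_D(f)$ is independent of $D$ and manifestly determines $\fu$ uniquely.
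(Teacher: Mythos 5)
Your proposal is correct and follows essentially the same route as the paper: the consistency claim in item (2) is settled by passing to a common enclosing $D\in\B$ and invoking \ref{obs:facts_on_Bochner}\ref{item:Bochner_and_closed_map} (equivalently Theorem \ref{theorem:integrals_subbanach_spaces_agree}) together with the a.e.\ invariance of the Bochner integral, item (3) rests on the norm equivalence from \ref{obs:pedantry_about_norm}, and the well-definedness of $\fu$ is then the direct corollary of item (2), exactly as the paper intends. The additional details you supply (the quotient description of $\fB_D$ and the explicit uniqueness argument for $\fu$) are the standard fillings-in that the paper leaves implicit.
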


\begin{theorem}
$\fU$ is a vector space, $\fu$ is linear and $\{\fB_D: D\in \B\}$ is a Banach cover of $\fU$. 
\end{theorem}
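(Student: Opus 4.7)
The whole statement reduces to the Bochner theory on a single Banach space by exploiting the directedness built into the Banach cover $\B$, together with Theorem~\ref{theorem:integrals_subbanach_spaces_agree}. The driving observation is that any finite collection of elements of $\fU$ lies jointly in some $\fB_D$, and within a single $\fB_D$ everything (sum, scalar multiple, integral, norm) is already understood.

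To see that $\fU$ is a vector space and $\fu$ is linear, pick $f,g\in \fU$ and scalars $\alpha,\beta$, with representatives in $B_{D_1}$ and $B_{D_2}$ for some $D_1,D_2\in\B$. Use the defining property of a Banach cover to choose $D\in\B$ with $D_1,D_2\subset D$ and continuous inclusion maps. By Theorem~\ref{theorem:integrals_subbanach_spaces_agree} both representatives, regarded as maps $X\to D$, are Bochner integrable into $D$ and represent $f$ and $g$ in $\fB_D$. Hence $\alpha f+\beta g\in \fB_D\subset \fU$, so $\fU$ is closed under the operations; and the same theorem identifies $\fu(f)=\fb_D(f)$, $\fu(g)=\fb_D(g)$, so the linearity of $\fu$ on $\fU$ is inherited from the linearity of $\fb_D$ on $\fB_D$.

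For the Banach cover property of $\{\fB_D:D\in\B\}$, each $\fB_D$ is a Banach space by \ref{obs:facts_on_Bochner}\ref{item:Bochner_Banach}, and $\bigcup_{D\in\B}\fB_D=\fU$ is the very definition of $\fU$. It remains to check the pairwise directedness: given $\fB_{D_1},\fB_{D_2}$, choose $D\in\B$ with $D_1,D_2\subset D$ and continuous inclusion maps (operator norms $M_1,M_2$, say). As above, Theorem~\ref{theorem:integrals_subbanach_spaces_agree} gives $\fB_{D_i}\subset \fB_D$ as subsets of $\fU$, and the pointwise bound $\|f(x)\|_D\le M_i\|f(x)\|_{D_i}$ integrates to $\|T_D(f)\|_{\fB_D}\le M_i\,\|T_{D_i}(f)\|_{\fB_{D_i}}$, so each inclusion $\fB_{D_i}\to\fB_D$ is continuous.

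The only point requiring a little care, per \ref{obs:pedantry_about_norm}, is the bookkeeping between honest functions and $\mu$-equivalence classes: the containment $\fB_{D_i}\subset \fB_D$ is genuinely an inclusion of subsets of $\fU$, because a single representative $f\in B_{D_i}\subset B_D$ is sent by $T_{D_i}$ and $T_D$ to the same class in $\fU$. With that identification in place, nothing else is to prove.
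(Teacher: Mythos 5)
Your proof is correct and follows essentially the argument the paper intends (the paper leaves it implicit, stating only that the construction in \ref{obs:definition_B_integral} ``leads to'' the theorem): you reduce every claim to a single common $D\in\B$ via the cover property together with Theorem~\ref{theorem:integrals_subbanach_spaces_agree}, and obtain continuity of the inclusions $\fB_{D_i}\to\fB_D$ by integrating the pointwise norm bound. Nothing further is needed.
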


\begin{obs}  
\label{obs:trivial_covers} \
\begin{enumerate}[label=(\arabic*)]
\item 
	For any vector space the finite dimensional linear subspaces form a Banach cover. 
\item 
	If $E$ is a Banach space, then $\{E\}$ is a Banach cover. 
\item 
\label{item:E_Banach_and_contained_in_cover}
	If $E$ is a Banach space and $\B$ is a Banach cover with $E\in \B$, then $\fU$ is just the space of (classes of) Bochner integrable functions $X \rightarrow E$ and $\fu$ is the Bochner integral. 
\item 
\label{item:E_itself_has_unit}
	A special case of \ref{item:E_Banach_and_contained_in_cover}: 
	Let $E$ be a uniformly complete Riesz space with a unit $e$ and let $\B$ be the Banach cover of principal ideals as in Example \ref{example:principal_ideals_banach_cover}. Then $(E,\|\cdot\|_e)\in \B$. 
\end{enumerate}
\end{obs}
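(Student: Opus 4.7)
The plan is to verify each of the four items directly against Definition \ref{def:Banach_cover}; all four are short checks, with only item (3) requiring any nontrivial input.

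For (1), I first note that a finite-dimensional subspace $F \subset E$ carries a unique Hausdorff Banach space topology (all norms on a finite-dimensional space are equivalent), so we may speak of ``the collection of finite-dimensional Banach subspaces of $E$''. The union equals $E$ since each $x \in E$ lies in $\mathrm{span}(x)$. Given finite-dimensional $F_1, F_2$, the sum $F_1 + F_2$ is again finite-dimensional, contains both, and the inclusions $F_i \hookrightarrow F_1 + F_2$ are linear maps between finite-dimensional normed spaces, hence automatically continuous. Item (2) is immediate: $\bigcup\{E\} = E$, and the pair $D_1 = D_2 = E$ admits $D = E$ as the required joint upper bound.

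For (3), the key step is to show that every $D \in \B$ embeds continuously in $E$. Applying the Banach cover property to the pair $\{D, E\}$, there exists $D' \in \B$ containing both with continuous inclusions; but $D'$ is a subspace of $E$ that contains $E$, hence has the same underlying vector space as $E$. By the Closed Graph argument recorded in \ref{obs:pedantry_about_norm}, the norms on $(D', \|\cdot\|_{D'})$ and $(E, \|\cdot\|_E)$ are then equivalent, and consequently the composition $D \hookrightarrow D' \cong E$ is continuous. Now Theorem \ref{theorem:integrals_subbanach_spaces_agree} implies that every $f$ which is Bochner integrable as $X \to D$ with integral $I$ is also Bochner integrable as $X \to E$ with the same integral $I$. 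This shows that $\fU$ coincides with the set of $\mu$-classes of Bochner integrable functions $X \to E$, and that $\fu$ coincides with the Bochner integral; the reverse inclusion is trivial since $E \in \B$ already. Item (4) is then immediate from (3) together with Example \ref{example:principal_ideals_banach_cover}: if $e$ is a unit of $E$ then $E_e = E$ by the very definition of a principal ideal and of a unit, and uniform completeness of $E$ is precisely what makes $(E_e, \|\cdot\|_e)$ a Banach space, so $(E, \|\cdot\|_e) \in \B$.

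The only mildly delicate step is the continuity argument underlying (3), but it has been spelled out in the discussion of \ref{obs:pedantry_about_norm} and reduces to a single invocation of the Closed Graph Theorem; everything else is routine bookkeeping against the definitions.
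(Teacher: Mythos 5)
Your verification is correct; the paper states these four items without proof, and your checks are exactly the routine arguments the authors leave implicit (in particular, for item (3) the continuity of the inclusion $D\rightarrow E$ already follows directly from the last paragraph of \ref{obs:pedantry_about_norm}, after which Theorem \ref{theorem:integrals_subbanach_spaces_agree} gives the claim as you say). Nothing further is needed.
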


%

\begin{obs}
\label{obs:inclusion_covers}
Let $E$ be a vector space and $\B_1$ and $\B_2$ be Banach covers of $E$. 
Suppose that for all $D_1 \in \B_1$ there exists a $D_2 \in \B_2$ with $D_1 \subset D_2$ such that the inclusion map is continuous. 
Write $\fU_i, \fu_i$ for the set of $\B_i$-integrable functions and the $\B_i$-integral, for $i\in \{1,2\}$. Then $\fU_1 \subset \fU_2$ and $\fu_1 = \fu_2 $ on $\fU_1$. 
\end{obs}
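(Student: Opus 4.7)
The plan is direct: reduce to Theorem~\ref{theorem:integrals_subbanach_spaces_agree}, which says that a Bochner integrable function into a Banach subspace is automatically Bochner integrable into any continuously containing Banach space, with the same integral value.

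First I would unpack what $f\in\fU_1$ means via~\ref{obs:definition_B_integral}: there exist $D_1\in\B_1$ and $g\in B_{D_1}$ with $f=T_{D_1}(g)$, so $\fu_1(f)=b_{D_1}(g)$. Then, invoking the hypothesis, pick $D_2\in\B_2$ with $D_1\subset D_2$ and continuous inclusion. Applying Theorem~\ref{theorem:integrals_subbanach_spaces_agree} to $g$ yields $g\in B_{D_2}$ together with $b_{D_2}(g)=b_{D_1}(g)$. In particular $f=T_{D_2}(g)$ witnesses $f\in\fB_{D_2}\subset\fU_2$, and chaining the identities gives
\[
\fu_2(f)=\fb_{D_2}(T_{D_2}(g))=b_{D_2}(g)=b_{D_1}(g)=\fb_{D_1}(T_{D_1}(g))=\fu_1(f),
\]
which is the full claim.

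There is no real obstacle: the argument is bookkeeping once Theorem~\ref{theorem:integrals_subbanach_spaces_agree} is on the table. The only point worth noting is that $\fu_1(f)$ and $\fu_2(f)$ are already well-defined independently of any particular choice of representative $(D_i,g)$ — this was the content of~\ref{obs:definition_B_integral}\ref{item:almost_everywhere_equal_functions_with_values_in_different_B_spaces} — so no extra compatibility check is required.
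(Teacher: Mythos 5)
Your argument is correct and is exactly the reduction the paper intends: the observation is stated without proof precisely because it follows from \ref{obs:definition_B_integral} together with Theorem \ref{theorem:integrals_subbanach_spaces_agree} in the way you spell out. The bookkeeping with $T_{D_i}$, $b_{D_i}$, $\fb_{D_i}$ and the appeal to well-definedness of $\fu_i$ is all that is needed.
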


\begin{example}[Different covers and different integrals]
\label{example:two_norms_on_Banach_space_give_different_integrals}
Let $(E,\|\cdot\|)$ be an infinite dimensional Banach space. 
Let $T: E \rightarrow E$ a linear bijection that is not continuous; say, there exist $x_n \in E$ with $\|x_n\| = 2^{-n}$, $Tx_n = x_n$, $T(\sumn x_n) \ne \sumn x_n$. 
Define $\|x\|_T = \|Tx\|$ for $x\in E$. 
Then $(E,\|\cdot\|_T)$ is a Banach space. 
The map $f:\N \rightarrow E$ given by $f(n)= x_n$ is Bochner integrable in $(E,\|\cdot\|)$ and in $(E,\|\cdot\|_T)$, but the integrals do not agree. 
Whence with $\B_1 = \{ (E,\|\cdot\|)\}$ and $\B_2 = \{ (E,\|\cdot\|_T)\}$ we have $f\in \fU_1 \cap \fU_2$ but $\fu_1(f) \ne \fu_2(f)$. 
\end{example}

As an immediate consequence of Theorem \ref{theorem:Bochner_functions_Banach_lattice} we obtain the following theorem. 

\begin{theorem} 
\label{theorem:Banach_lattice_cover_implies_Bochner_Riesz_space}
Suppose $E$ is a Riesz space and $\B$ is a Banach cover of $E$ that consists of Banach lattices that are Riesz subspaces of $E$. 
Then $\fU$ is a Riesz space and $\fu$ is order preserving. 
\end{theorem}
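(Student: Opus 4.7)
The plan is to reduce the theorem to Theorem \ref{theorem:Bochner_functions_Banach_lattice} (applied to each member of the cover) via the Banach cover property, with the crucial observation that since every $D\in\B$ is a Riesz subspace of $E$, the order and the lattice operations on $D$ are inherited from those on $E$. In particular, for $x,y\in D$ the element $x\vee y$ computed in $D$ equals $x\vee y$ computed in $E$, and similarly for $|\cdot|$.

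First I would show that $\fU$ is a Riesz space. Given $f\in\fU$, pick $D\in\B$ with $f\in\fB_D$. By Theorem \ref{theorem:Bochner_functions_Banach_lattice}, $\fB_D$ is a Banach lattice, so the pointwise $|f|$ (formed in $D$) belongs to $\fB_D\subset\fU$. Because $D$ is a Riesz subspace of $E$, this pointwise $|f|$ agrees $\mu$-a.e.\ with the pointwise $|f|$ in $E$, so it serves as the absolute value in $\fU$ relative to the natural ordering ($f\le g$ iff $f\le g$ $\mu$-a.e.\ in $E$). For $f,g\in\fU$ one similarly picks $D_1,D_2\in\B$ with $f\in\fB_{D_1}$, $g\in\fB_{D_2}$ and invokes the Banach cover property to find $D\in\B$ with $D_1,D_2\subset D$ and continuous inclusions; then $f,g\in\fB_D$, and $f\vee g\in\fB_D\subset\fU$ by Theorem \ref{theorem:Bochner_functions_Banach_lattice}. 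The Riesz subspace property ensures this $f\vee g$ is the pointwise supremum in $E$, hence a supremum in $\fU$.

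For order preservingness of $\fu$, suppose $f\in\fU$ with $f\ge 0$. Then $f\in\fB_D$ for some $D\in\B$, and $f\ge 0$ $\mu$-a.e.\ in $E$. Since $D^+=D\cap E^+$ (Riesz subspace), $f\ge 0$ in $\fB_D$ as well. Theorem \ref{theorem:Bochner_functions_Banach_lattice} yields $\fu(f)=\fb_D(f)\ge 0$ in $D$, hence in $E$.

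The only real point requiring attention is consistency across the cover: one must confirm that the lattice operations and the notion of positivity on $\fU$ do not depend on which $D\in\B$ is used, which is precisely what the Riesz subspace hypothesis guarantees (and also that two different choices of $D$ can always be merged via a third using the cover property, as in \ref{obs:definition_B_integral}\ref{item:almost_everywhere_equal_functions_with_values_in_different_B_spaces}). No further work is needed.
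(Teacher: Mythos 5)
Your proof is correct and follows the same route as the paper, whose proof is simply the remark that the statement is a consequence of Theorem \ref{theorem:Bochner_functions_Banach_lattice}; you have merely spelled out the routine gluing details (merging two members of the cover and using the Riesz-subspace hypothesis for consistency of the lattice operations and of positivity) that the paper leaves implicit.
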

\begin{proof}
This is a consequence of Theorem \ref{theorem:Bochner_functions_Banach_lattice}. 
\end{proof}

\begin{obs}
\label{obs:order_preserving_and_covers}
Whenever $E$ is an ordered vector space and $\B$ an ordered Banach cover of $E$, then $\fU$ is an ordered vector space. 
In order for $\fu$ to be order preserving, one needs a condition on $\B$. 
This and other matters will be treated in \S\ref{section:Bochner_on_covers2}. 
A sufficient condition turns out to be closedness of $D^+$ for every $D\in \B$ (see Theorem \ref{theorem:closed_cone_implies_order_preservingness_integral} and Theorem \ref{theorem:closed_cones_in_banach_cover_imply_order_preservingness_integral}). 
First we will see in \S\ref{section:useful_covers} 
that  all Archimedean directed ordered vector spaces 
admit such ordered Banach covers. (The Archimedean property is necessary as follows easily from Theorem \ref{theorem:ordering_determined_by_dual_cone}). 
\end{obs}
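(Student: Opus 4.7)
The plan is to equip $\fU$ with the natural pointwise ordering inherited from $E$ and then verify the ordered vector space axioms. For $f,g\in \fU$ (regarded as $\mu$-equivalence classes), I would set $f \le g$ precisely when representatives $\tilde f, \tilde g$ satisfy $\tilde f(x) \le \tilde g(x)$ in $E$ for $\mu$-almost every $x\in X$. First I would check this is well-defined on equivalence classes: altering $\tilde f$ and $\tilde g$ on null sets only changes the set $\{x : \tilde f(x)\le \tilde g(x)\}$ up to a null set. I would also note that, since $\B$ is an \emph{ordered} Banach cover, each $D\in\B$ carries the restricted order of $E$, so it makes no difference through which $D\in\B$ we view a representative of $f$ when interpreting the inequality.

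Next I would verify the three order axioms. Reflexivity and transitivity follow directly from the corresponding pointwise properties in $E$, using that the intersection of two sets of full $\mu$-measure again has full $\mu$-measure. For antisymmetry: if $\tilde f \le \tilde g$ on a full-measure set $X_1$ and $\tilde g \le \tilde f$ on a full-measure set $X_2$, then on $X_1 \cap X_2$ one has $\tilde f(x) = \tilde g(x)$ by antisymmetry of the order on $E$, so $f = g$ in $\fU$.

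Finally I would check compatibility with the linear structure. Addition in $\fU$ is defined, following \ref{obs:definition_B_integral}, by picking $D\in\B$ containing both the $\B_{D_1}$-representative of $f$ and the $\B_{D_2}$-representative of $g$ with continuous inclusions, and forming the sum in $\fB_D$; scalar multiplication is defined inside a single $\fB_D$. If $f\le g$ in $\fU$ and $h\in \fU$, then selecting representatives on a common full-measure set yields $\tilde f(x)+\tilde h(x) \le \tilde g(x)+\tilde h(x)$ in $E$ pointwise, so $f+h\le g+h$; and $0\le \lambda \in \R$ gives $\lambda \tilde f(x)\le \lambda \tilde g(x)$ pointwise, so $\lambda f\le \lambda g$. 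Hence $\fU^+=\{f\in\fU:f\ge 0\}$ is a convex cone with $\fU^+\cap(-\fU^+)=\{0\}$, which is exactly the statement that $\fU$ is an ordered vector space.

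The only real subtlety — and the place deserving care — is the independence of the ordering from the particular Banach space $D\in\B$ used to represent a function. This is immediate from the definition of an ordered Banach cover, where each $D$ inherits its order from $E$, but it is worth spelling out because the representation of an element of $\fU$ is otherwise highly non-unique. The remainder is routine bookkeeping with $\mu$-a.e. statements.
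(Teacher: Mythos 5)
Your verification is correct and matches the paper's intent: the paper states this observation without proof, treating the fact that the $\mu$-a.e.\ pointwise order (inherited from $E$, which is consistent across the spaces $D\in\B$ precisely because an ordered Banach cover consists of ordered subspaces of $E$) makes $\fU$ an ordered vector space as routine, exactly as in \ref{obs:facts_on_Bochner}\ref{item:ordering_on_Bochner_functions} for a single $\fB_D$. The remaining assertions of the observation are forward references to Theorems \ref{theorem:closed_cone_implies_order_preservingness_integral}, \ref{theorem:closed_cones_in_banach_cover_imply_order_preservingness_integral}, \ref{theorem:cover_by_ordered_Banach_spaces_with_closed_generating_cone} and \ref{theorem:ordering_determined_by_dual_cone}, so rightly require no argument here.
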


\begin{obs}
\label{obs:bornological_convergence_and_B_convergence}
Whenever $E$ is a vector space and $\B$ is a Banach cover of $E$, then the set 
\begin{align}
\{ A\subset E: \mbox{ there exists a } D\in \B \mbox{ such that } A \mbox{ is bounded in } D\}
\end{align}
forms a bornology on $E$ (we refer to the book of Hogbe-Nlend \cite{Ho77} for the theory of bornologies). 
\end{obs}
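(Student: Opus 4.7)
The plan is to verify the defining axioms of a (vector) bornology for the collection, which I will call $\mathcal{N}$: it covers $E$, is hereditary, is closed under finite unions, and (as a vector bornology) is stable under scalar multiples, vector sums and balanced hulls. The only nontrivial step uses the joint-domination clause of Definition \ref{def:Banach_cover}; the rest is bookkeeping.

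First I would observe that $\bigcup \B = E$ implies that every singleton $\{x\}$ lies in some $D \in \B$ and is $\|\cdot\|_D$-bounded, so $\mathcal{N}$ covers $E$. Hereditariness is immediate: if $A \subset A'$ and $A'$ is bounded in some $D \in \B$, then $A \subset D$ as well and is bounded in $D$.

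The heart of the argument is closure under finite unions. Given $A_1 \in \mathcal{N}$, witnessed by $D_1 \in \B$, and $A_2 \in \mathcal{N}$, witnessed by $D_2 \in \B$, Definition \ref{def:Banach_cover} provides a $D \in \B$ with $D_1, D_2 \subset D$ whose inclusion maps are continuous. Continuous linear maps send bounded sets to bounded sets, so $A_1$ and $A_2$ are both $\|\cdot\|_D$-bounded; hence so are $A_1 \cup A_2$ and (for the vector-bornology version) $A_1 + A_2$. Stability of $\mathcal{N}$ under scalar multiplication and passage to the balanced hull is inherited from the corresponding property of the norm on each individual $D \in \B$.

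There is no genuine obstacle in the proof: the content of the statement is precisely that the joint-cover axiom of a Banach cover supplies enough ``room'' to close $\mathcal{N}$ under finite unions and vector sums, while the remaining bornology axioms are immediate from working inside one fixed $(D, \|\cdot\|_D)$.
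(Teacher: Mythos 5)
Your verification is correct, and it is exactly the routine check the paper leaves implicit (the observation is stated without proof): coverage and heredity are immediate, and the joint-domination clause of Definition \ref{def:Banach_cover} together with the fact that continuous linear maps preserve bounded sets gives closure under finite unions (and, for the vector-bornology refinement, under sums, scalar multiples and balanced hulls). Nothing is missing.
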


\section{Covers of ordered Banach spaces with closed generating cones}
\label{section:useful_covers}

\textbf{In this section $E$ is an Archimedean directed 
ordered vector space. 
$\B$ is the collection of all ordered Banach spaces that ordered linear subspaces of $E$ whose cones are closed and generating. 
}

We intend to prove that $\B$ is a Banach cover of $E$ (Theorem \ref{theorem:cover_by_ordered_Banach_spaces_with_closed_generating_cone}). 

\begin{lemma}
\label{lemma:same_convergence_on_intersections}
Let $(D_1, \|\cdot\|_1)$, $(D_2,\|\cdot\|_2)$ be in $\B$.
Let $z_1,z_2,\dots \in D_1 \cap D_2$, $a\in D_1$, $b\in D_2$, $\|z_n - a\|_1 \rightarrow 0$, $\|z_n - b\|_2 \rightarrow 0$. Then $a=b$. 
\end{lemma}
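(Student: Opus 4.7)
The plan is to transport the two norm convergences into statements about uniform (order-)convergence inside each $D_i$, via Theorem \ref{theorem:equivalences_useful}, and then compare them inside the ambient Archimedean space $E$.

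First, by passing to a suitable subsequence I may assume $\sum_n \|z_n - a\|_1 < \infty$ and $\sum_n \|z_n - b\|_2 < \infty$ (this does not harm the conclusion, because if $a = b$ holds for a subsequence it holds for the original sequence). Since $D_1$ is Archimedean and directed with $D_1^+$ closed, Theorem \ref{theorem:equivalences_useful} (implication \ref{item:useful}$\Rightarrow$\ref{item:archi_and_ru_condition}) applied to $x_n := z_n - a$ yields $c_1 \in D_1^+$ and $\epsilon_n \downarrow 0$ with
\begin{align*}
-\epsilon_n c_1 \le z_n - a \le \epsilon_n c_1 \qquad (n \in \N).
\end{align*}
Applying the same theorem to $D_2$ and $y_n := b - z_n$, I obtain $c_2 \in D_2^+$ and $\delta_n \downarrow 0$ with
\begin{align*}
-\delta_n c_2 \le b - z_n \le \delta_n c_2 \qquad (n \in \N).
\end{align*}

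Next, since $D_1$ and $D_2$ are (by the definition of $\B$) ordered linear subspaces of $E$, their positive cones are $D_i \cap E^+$ and all the above inequalities remain valid when read in $E$. Adding them and setting $c := c_1 + c_2 \in E^+$ and $m_n := \max(\epsilon_n, \delta_n)$ gives
\begin{align*}
-m_n c \le b - a \le m_n c \qquad (n \in \N),
\end{align*}
with $m_n \downarrow 0$. Given $k \in \N$, pick $n$ with $m_n \le 1/k$; then $k(b-a) \le c$ and $k(a-b) \le c$. As $E$ is Archimedean, it follows that $b - a \le 0$ and $a - b \le 0$, so $a = b$.

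The only delicate point is step two: turning norm convergence into uniform order convergence in each $D_i$. This is exactly what Theorem \ref{theorem:equivalences_useful} provides, and the remainder is a straightforward comparison inside $E$ using that both $D_1$ and $D_2$ inherit their order from $E$.
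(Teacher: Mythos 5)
Your proof is correct and follows essentially the same route as the paper: reduce to a summable subsequence, use Theorem \ref{theorem:equivalences_useful} to convert norm convergence into uniform convergence in each $D_i$, and then conclude $a=b$ inside the Archimedean space $E$. The only difference is that you spell out the uniqueness-of-uniform-limits argument (adding the two order estimates and invoking the Archimedean property) rather than citing it, which is fine.
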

\begin{proof}
We may assume $\sumn \|z_n - a\|_1 <\infty$ and $\sumn \|z_n - b\|_2 <\infty$. 
Then $z_n \unifrightarrow a$ in $D_1$ and $z_n \unifrightarrow b$ in $D_2$ by Theorem \ref{theorem:equivalences_useful}. 
Then $z_n \unifrightarrow a$ and $z_n \unifrightarrow b$ in $E$. 
Because $E$ is Archimedean, $a=b$. 
\end{proof}

\begin{obs}
\label{obs:norms_on_ordered_Banach_space_with_closed_generating_cone_are_equivalent}
If $D$ is an ordered Banach space with closed generating cone $D^+$, under each of two norms $\|\cdot\|_1$ and $\|\cdot\|_2$, then these norms are equivalent. Indeed, the identity map $(D,\|\cdot\|_1) \rightarrow (D,\|\cdot\|_2)$ has a closed graph by Lemma \ref{lemma:same_convergence_on_intersections}. 
\end{obs}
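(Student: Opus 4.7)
The plan is to apply the Closed Graph Theorem to the identity map $\mathrm{id} \colon (D,\|\cdot\|_1) \to (D,\|\cdot\|_2)$, using Lemma \ref{lemma:same_convergence_on_intersections} (applied with the ambient ordered vector space taken to be $E := D$ itself) to establish that the graph is closed.

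First I would justify that Lemma \ref{lemma:same_convergence_on_intersections} applies. Since $D$ is directed and $D^+$ is closed under both norms, $D$ is Archimedean by Theorem \ref{theorem:ordering_determined_by_dual_cone}. Hence, viewing $E := D$ as the ambient Archimedean directed ordered vector space of this section, both $(D,\|\cdot\|_1)$ and $(D,\|\cdot\|_2)$ are members of the associated collection $\B$, with the same underlying ordered vector space. Next I would verify the closed graph condition: take $x_n \in D$ with $\|x_n - a\|_1 \to 0$ and $\|x_n - b\|_2 \to 0$. By Lemma \ref{lemma:same_convergence_on_intersections} (using $D_1 = (D,\|\cdot\|_1)$, $D_2 = (D,\|\cdot\|_2)$, $z_n = x_n$), we conclude $a = b$, so the graph of $\mathrm{id}$ is indeed closed.

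Since $(D,\|\cdot\|_1)$ and $(D,\|\cdot\|_2)$ are both Banach spaces, the Closed Graph Theorem yields continuity of $\mathrm{id}$, i.e., a constant $C_1 > 0$ with $\|x\|_2 \le C_1 \|x\|_1$ for all $x \in D$. Interchanging the roles of the two norms, the same argument gives a constant $C_2 > 0$ with $\|x\|_1 \le C_2 \|x\|_2$ for all $x \in D$. Therefore the two norms are equivalent.

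There is no real obstacle here once the lemma is available; the only point requiring a moment's care is the observation that the Archimedean property holds automatically (via Theorem \ref{theorem:ordering_determined_by_dual_cone}), so that the machinery of Theorem \ref{theorem:equivalences_useful} which underlies Lemma \ref{lemma:same_convergence_on_intersections} is legitimately in force. The rest is a standard one-line invocation of the Closed Graph Theorem plus symmetry.
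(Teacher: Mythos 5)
Your proof is correct and follows essentially the same route as the paper: a Closed Graph Theorem argument for the identity map, with the graph shown closed via Lemma \ref{lemma:same_convergence_on_intersections} (applied with $E=D$, Archimedean by Theorem \ref{theorem:ordering_determined_by_dual_cone}). The extra details you supply (directedness, the Archimedean check, and the symmetry step) are exactly what the paper leaves implicit.
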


\begin{theorem}
\label{theorem:sum_has_also_closed_generating_cone}
Let $(D_1, \|\cdot\|_1)$, $(D_2,\|\cdot\|_2)$ be in $\B$.
$D_1+D_2$ is an ordered Banach space with closed generating cone under the norm 
 $\|\cdot\|: D_1 + D_2 \rightarrow [0,\infty)$ defined by  
\begin{align}
\|z\| := \inf \{ \|x\|_1 + \|y\|_2 : x\in D_1, y\in D_2, z= x+y\}. 
\end{align}
Moreover, if $C>0$ and $\|\cdot\|_1$ and $\|\cdot\|_2$ are $C$-absolutely dominating, then so is  $\|\cdot\|$. 
\end{theorem}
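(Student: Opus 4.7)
The plan is to realise $(D_1+D_2,\|\cdot\|)$ as a quotient of the $\ell^1$-type direct sum $W:=D_1\oplus D_2$ equipped with $\|(x,y)\|_W:=\|x\|_1+\|y\|_2$. The addition map $\pi:W\to E$, $(x,y)\mapsto x+y$, is linear with image $D_1+D_2$ and kernel $K=\{(u,-u):u\in D_1\cap D_2\}$. First I would show $K$ is closed in $W$: if $(u_n,-u_n)\to(a,b)$ in $W$, then $u_n\to a$ in $D_1$ and $u_n\to -b$ in $D_2$, so Lemma \ref{lemma:same_convergence_on_intersections} yields $a=-b\in D_1\cap D_2$. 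Hence $W/K$ is a Banach space, and $\pi$ induces a linear bijection $W/K\to D_1+D_2$ whose transported quotient norm is precisely the displayed infimum (the correspondence being $(x',y')=(x+u,y-u)$ for $u\in D_1\cap D_2$). This makes $(D_1+D_2,\|\cdot\|)$ a Banach space.

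The cone $(D_1+D_2)^+=E^+\cap(D_1+D_2)$ is generating: given $z=x+y$ with $x\in D_1$, $y\in D_2$, use directedness of $D_1,D_2$ to write $x=x_+-x_-$, $y=y_+-y_-$ with $x_\pm\in D_1^+$, $y_\pm\in D_2^+$, so that $z=(x_++y_+)-(x_-+y_-)$ is a difference of elements of $D_1^++D_2^+\subset(D_1+D_2)^+$.

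The main obstacle is closedness of $(D_1+D_2)^+$. Suppose $z_n\in(D_1+D_2)^+$ with $\|z_n-z\|\to0$; after passing to a subsequence I may assume $\sum_n\|z_n-z\|<\infty$. For each $n$ pick $x_n\in D_1$, $y_n\in D_2$ with $z-z_n=x_n+y_n$ and $\|x_n\|_1+\|y_n\|_2\le\|z-z_n\|+2^{-n}$, so that $\sum_n\|x_n\|_1<\infty$ and $\sum_n\|y_n\|_2<\infty$. Since $D_1^+$ and $D_2^+$ are closed, the implication \ref{item:useful}$\Rightarrow$\ref{item:archi_and_ru_condition} of Theorem \ref{theorem:equivalences_useful} gives $x_n\unifrightarrow 0$ in $D_1$ and $y_n\unifrightarrow 0$ in $D_2$, hence a fortiori in $E$. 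Thus $z-z_n\unifrightarrow 0$ in $E$, so there exist $a\in E^+$ and $\epsilon_n\downarrow 0$ with $-\epsilon_n a\le z-z_n\le\epsilon_n a$; combined with $z_n\ge0$ this gives $z\ge z_n-\epsilon_n a\ge-\epsilon_n a$ for every $n$, and since $E$ is Archimedean, $z\ge0$.

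Finally, for the $C$-absolute dominance claim, given $z\in D_1+D_2$ and $\epsilon>0$, pick $x\in D_1$, $y\in D_2$ with $z=x+y$ and $\|x\|_1+\|y\|_2<\|z\|+\epsilon$; applying $C$-absolute dominance in $D_1$ and $D_2$ produces $a\in D_1^+$, $b\in D_2^+$ with $-a\le x\le a$, $-b\le y\le b$, and $\|a\|_1\le C\|x\|_1+\epsilon$, $\|b\|_2\le C\|y\|_2+\epsilon$. Then $c:=a+b\in(D_1+D_2)^+$ satisfies $-c\le z\le c$ and $\|c\|\le\|a\|_1+\|b\|_2\le C(\|x\|_1+\|y\|_2)+2\epsilon<C\|z\|+(C+2)\epsilon$, which on letting $\epsilon\downarrow 0$ yields the required bound.
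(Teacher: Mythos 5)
Your proposal is correct and follows essentially the same route as the paper: the quotient of $D_1\oplus D_2$ (with the sum norm) by the diagonal, closed via Lemma \ref{lemma:same_convergence_on_intersections}; generatingness from $D_1^++D_2^+\subset(D_1+D_2)^+$; closedness of the cone via the uniform-convergence criterion of Theorem \ref{theorem:equivalences_useful} applied to $D_1$ and $D_2$ (you merely unwind the final step by hand using the Archimedean property of $E$ instead of citing the implication \ref{item:archi_and_ru_condition}$\Rightarrow$\ref{item:useful} for $D_1+D_2$); and the same $\epsilon$-argument for $C$-absolute domination.
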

\begin{proof}
$D_1 \times D_2$ is a Banach space under the norm $(x, y) \mapsto \|x\|_1 + \|y\|_2$. 
From Lemma \ref{lemma:same_convergence_on_intersections} it follows that $\Delta:= \{(a,b)\in D_1 \times D_2: a=-b\}$ is closed in $D_1 \times D_2$. 
Then $D_1 \times D_2 / \Delta$ is a Banach space under the quotient norm. 
This means that $D_1 + D_2$ is a Banach space under $\|\cdot\|$.  (In particular, $\|\cdot\|$ is a norm.) \\
Since $D_1^+ + D_2^+ \subset (D_1+D_2)^+$, the latter is generating. \\
We prove that $(D_1+D_2)^+$ is closed. 
Let $u_1,u_2,\dots \in D_1+D_2$, $\sumn \|u_n\|<\infty$; we prove $u_n \unifrightarrow 0$. 
Choose $x_n \in D_1$, $y_n\in D_2$ with $u_n = x_n + y_n$ and $\sumn \|x_n\|_1 <\infty$, $\sumn \|y_n\|_2<\infty$. Then, see Theorem \ref{theorem:equivalences_useful}, $x_n \unifrightarrow 0$ in $D_1$ and $y_n \unifrightarrow 0$ in $D_2$. It follows that $u_n \unifrightarrow 0$ in $D_1+D_2$. By Theorem \ref{theorem:equivalences_useful} it follows that $(D_1+ D_2)^+$ is closed. \\
Suppose $C>0$ is such that $\|\cdot\|_1$ and $\|\cdot\|_2$ are $C$-absolutely dominating. 
Let $z\in D_1 + D_2$, $\epsilon>0$.
Choose $x\in D_1$, $y\in D_2$ with $z= x+y$, $\|x\|_1 + \|y\|_2 \le \|z\|+ \frac{\epsilon}{3}$. 
Choose $a\in D_1^+$ with $-a\le  x \le a$, $\|a\|_1 < C\|x\|_1 + \frac{\epsilon}{3}$ and $b\in D_2^+$ with $-b \le y \le b$, $\|b\|_2< C\|y\|_2 +\frac{\epsilon}{3}$. 
Set $c = a+b$.  
Then $c\in (D_1+D_2)^+$, $-c\le z\le c$ and $\|c\| \le \|a\|_1+ \|b\|_2 < C\|x\|_1 + C\|y\|_2 + 2 \frac{\epsilon}{3} < C\|z\|+\epsilon$.
\end{proof}

\begin{obs}
\label{obs:every_element_is_contained_in_a_useful_BS}
Let $x\in E$. (We make a $D\in \B$ with $x\in D$.)
Choose $a\in E$ such that $-a\le x \le a$. 
Let $D= \R ( a- x) + \R( a+x)= \R a+ \R x$. 
$D$ is a directed 
ordered vector space. 
Define $\|\cdot\|: D \rightarrow [0,\infty)$ by 
\begin{align}
\|y\| := \inf \{ s \ge 0 : -sa \le y \le s a\}.
\end{align}
Then $\|\cdot\|$ is a norm on $D$, $-\|y\|a \le y \le \|y\| a$ for all $y\in D$ and $\|a\|=1$. 
Thus $(D,\|\cdot\|)$ is a directed ordered Banach space. Moreover $D^+$ is closed: Let $y\in D$, $y_1,y_2,\dots \in D^+$, $\|y -y_n\| < \frac1n$. 
Then $y \ge y_n - \frac1n a\ge - \frac1n a$, so $y \ge 0$. 

$\|\cdot\|$ is $1$-absolutely dominating:
$\|a\|=1$, so 
$\inf \{\|c\|: c\in D^+,  -c \le y \le c\} \le \inf \{ s \ge 0: -sa \le y \le sa\} = \|y\|$. 

Even $\|y \| = \inf \{\|c\|: c\in D^+, -c \le y \le c\}$: For $c\in D^+$ with $-c \le y \le c$ and $s\ge 0$ such that $c \le s a$ we have $-sa \le -c \le y \le c \le sa$ and so $\|y\|\le \|c\|$. 
\end{obs}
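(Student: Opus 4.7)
The plan is to verify the assertions about $(D,\|\cdot\|)$ one at a time, each resting on the Archimedean hypothesis on $E$. First, $-a \le x \le a$ forces $-a \le a$, hence $2a \ge 0$ and so $a \ge 0$, which puts $a$, $a+x$, $a-x$ all in $D^+$. The identities $2a = (a+x)+(a-x)$ and $2x = (a+x)-(a-x)$ then give $D = D^+ - D^+$, so $D$ is directed. For the norm: the defining set is nonempty since, for $y = \alpha a + \beta x$, the inequality $-|\beta| a \le \beta x \le |\beta| a$ together with $a \ge 0$ yields $-sa \le y \le sa$ with $s = |\alpha|+|\beta|$; homogeneity and subadditivity are immediate from the definition. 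Positive-definiteness is the first use of Archimedeanity: if $\|y\| = 0$ then $-(1/n) a \le y \le (1/n) a$ for every $n$, i.e.\ $n y \le a$ and $n(-y) \le a$, forcing $y = 0$.

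The sharp bound $-\|y\| a \le y \le \|y\| a$ uses the same trick with $\epsilon$'s in place of $1/n$'s: choose $s_n \downarrow \|y\|$ with $-s_n a \le y \le s_n a$, and for each fixed $m$ pick $n$ large enough that $m(s_n - \|y\|) \le 1$; then $m(y - \|y\| a) \le m(s_n - \|y\|) a \le a$, so Archimedeanity gives $y \le \|y\| a$, and symmetrically $-\|y\| a \le y$. From the sharp bound, $\|a\| \le 1$ is immediate, while $\|a\| < 1$ would give $(1 - \|a\|) a \le 0$ with $1-\|a\| > 0$, forcing $a = 0$; so $\|a\| = 1$ whenever $a \ne 0$ (the case $a = 0$ collapses to $D = \{0\}$ and is trivial). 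Completeness comes for free because $D = \R a + \R x$ is finite-dimensional, so $(D,\|\cdot\|)$ is a directed ordered Banach space.

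Closedness of $D^+$ is again the same Archimedean move: if $y_n \in D^+$ and $\|y_n - y\| \to 0$, passing to a subsequence with $\|y_n - y\| \le 1/n$ gives $-(1/n) a \le y_n - y$, hence $y \ge y_n - (1/n) a \ge -(1/n) a$ for all $n$, so $y \ge 0$. Finally, for the $1$-absolutely dominating property with equality, the witness $c := \|y\| a$ lies in $D^+$, satisfies $-c \le y \le c$ by the sharp bound, and has $\|c\| = \|y\|$; conversely, any $c \in D^+$ with $-c \le y \le c$ satisfies $c \le \|c\| a$ (sharp bound applied to $c$), hence $-\|c\| a \le y \le \|c\| a$, so $\|y\| \le \|c\|$. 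The recurring obstacle throughout is a single technique: converting approximate order inequalities (those involving $\epsilon a$ or $(1/n) a$) into exact ones. Archimedeanity, in the form ``$n z \le a$ for all $n$ implies $z \le 0$'', is precisely what licenses this conversion, and once it is cleanly isolated at the start the remaining verifications are bookkeeping.
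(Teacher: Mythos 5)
Your proof is correct and follows essentially the same route as the paper's own verification, filling in the details the paper leaves implicit (the repeated use of the Archimedean property to pass from $-\epsilon a\le z\le\epsilon a$ for all $\epsilon$ to $z\le 0$, completeness via finite-dimensionality of $D=\R a+\R x$, and the degenerate case $a=0$). No gaps.
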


\begin{theorem}
\label{theorem:cover_by_ordered_Banach_spaces_with_closed_generating_cone}
$\B$ is a Banach cover of $E$.
Moreover, 
\begin{align}
\{ D\in \B : \|\cdot\|_D \mbox{ is $1$-absolutely dominating }\}
\end{align}
 is a Banach cover of $E$. 
\end{theorem}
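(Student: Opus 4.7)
The plan is to assemble the Banach cover directly from the two construction results already established in the section. Specifically, Observation \ref{obs:every_element_is_contained_in_a_useful_BS} produces, for every $x \in E$, a two-dimensional member of $\B$ containing $x$ whose norm is $1$-absolutely dominating, and Theorem \ref{theorem:sum_has_also_closed_generating_cone} produces, for every pair $D_1, D_2 \in \B$, a member of $\B$ containing both, which moreover carries a $1$-absolutely dominating norm whenever the norms on $D_1$ and $D_2$ do. These two facts together give both assertions essentially for free.

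First I would check $\bigcup \B = E$. Given $x \in E$, use that $E$ is directed to pick $a \in E$ with $-a \le x \le a$, and then invoke Observation \ref{obs:every_element_is_contained_in_a_useful_BS} to obtain the ordered Banach space $D = \R a + \R x$ with the norm $\|y\| = \inf\{s \ge 0 : -sa \le y \le sa\}$. That observation records that $D$ is a directed ordered Banach space with closed cone, hence $D \in \B$, and that its norm is $1$-absolutely dominating; in particular $x \in D$. This step therefore covers both the ``$\bigcup \B = E$'' and ``$\bigcup \B' = E$'' parts simultaneously, where $\B'$ denotes the subcollection with $1$-absolutely dominating norms.

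Next I would verify the directedness condition of Definition \ref{def:Banach_cover}. Given $D_1, D_2 \in \B$, set $D := D_1 + D_2$ with the quotient-type norm $\|z\| = \inf\{\|x\|_1 + \|y\|_2 : z = x + y,\ x \in D_1,\ y \in D_2\}$. By Theorem \ref{theorem:sum_has_also_closed_generating_cone}, $D$ is an ordered Banach space with closed generating cone, hence $D \in \B$. The inclusion $D_1 \hookrightarrow D$ is continuous because $\|x\| \le \|x\|_1$ for $x \in D_1$ (take the decomposition $x = x + 0$), and analogously for $D_2$. For the refined claim about $\B'$, if $\|\cdot\|_1$ and $\|\cdot\|_2$ are both $1$-absolutely dominating, the same Theorem \ref{theorem:sum_has_also_closed_generating_cone} gives that $\|\cdot\|$ is $1$-absolutely dominating, so $D \in \B'$.

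There is no real obstacle, since all the work has been done in \ref{obs:every_element_is_contained_in_a_useful_BS} and Theorem \ref{theorem:sum_has_also_closed_generating_cone}; the only thing to be careful about is that the pedantry of Observation \ref{obs:pedantry_about_norm} does not cause trouble, i.e.\ that we are choosing specific norms on $D_1 + D_2$ and on the two-dimensional spaces, rather than merely underlying vector spaces. Since in each case we exhibit an explicit norm lying in $\B$ (or $\B'$), this is automatic, and the proof reduces to citing the two preceding results.
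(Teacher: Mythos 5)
Your proposal is correct and follows essentially the same route as the paper: covering each $x\in E$ via the two-dimensional space of \ref{obs:every_element_is_contained_in_a_useful_BS} (whose norm is $1$-absolutely dominating), and handling pairs via the sum construction of Theorem \ref{theorem:sum_has_also_closed_generating_cone}, with continuity of the inclusions immediate from the definition of the quotient-type norm. The extra care you take with the continuity of the inclusions and the $1$-absolutely dominating subcollection is exactly what the paper's terser proof leaves implicit.
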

\begin{proof}
By \ref{obs:every_element_is_contained_in_a_useful_BS} each element of $ E$ is contained in an ordered Banach space with closed generating cone (with a $1$-absolutely dominating norm). 
By Theorem \ref{theorem:sum_has_also_closed_generating_cone} and by definition of the norm, $\B$ forms a Banach cover of $E$. 
\end{proof}

\begin{obs}
It is reasonable to ask if an analogue of Theorem \ref{theorem:cover_by_ordered_Banach_spaces_with_closed_generating_cone} holds in the world of Riesz spaces: does every Archimedean Riesz space have a Banach cover consisting of Riesz spaces?
The answer is negative. 

Let $E$ be the Riesz space of all functions $f$ on $\N$ for which there exist $N\in\N$ and $r,s\in \R$ such that $f(n) = sn + r$ for $n\ge N$. 
Suppose $E$ has a Banach cover $\B$ consisting of Riesz subspaces of $E$. 
There is a $D\in \B$ that contains the constant function $\1$ and the identity map $i : \N \rightarrow \N$. 
For every $n\in\N$, 
\begin{align}
\1_{\{1,\dots,n\}} = \1 \vee (n+1) \1 - i \vee n\1 \in D.
\end{align}
It follows that $D=E$, so $E$ is a Banach space under some norm. 

But $E$ is the union of an increasing sequence $D_1 \subset D_2 \subset \cdots$ of finite dimensional -hence, closed- linear subspaces:
\begin{align}
D_n = \R \1 + \R i + \R \1_{\{1\}} + \cdots +  \R \1_{\{n\}}.
\end{align}
By Baire's Category Theorem, some $D_n$ has nonempty interior in $E$. Then $E=D_n$ and we have a contradiction. 
\end{obs}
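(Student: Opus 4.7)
My plan is to argue by contradiction, exhibiting one specific Archimedean Riesz space that admits no Banach cover of Riesz subspaces. I will take $E$ to be the Riesz space of all functions $f\colon\N\to\R$ that are eventually affine, i.e.\ $f(n)=sn+r$ for all sufficiently large $n$, for some $r,s\in\R$ depending on $f$. This $E$ is clearly an Archimedean Riesz space under pointwise operations, contains the constant $\1$, the identity $i\colon n\mapsto n$, and every finitely supported function, and is spanned (as a vector space) by these.

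Suppose towards a contradiction that $E$ admits a Banach cover $\B$ whose members are Riesz subspaces. Since $\bigcup\B=E$, I pick $D_1,D_2\in\B$ containing $\1$ and $i$ respectively, and then use the Banach-cover axiom to produce a single $D\in\B$ with $\1,i\in D$. The key step is then to exploit that $D$ is \emph{closed under the lattice operations inherited from $E$}: I will verify, for each $n\in\N$, the pointwise identity
\begin{align*}
\1_{\{1,\dots,n\}} \;=\; \1\vee (n+1)\1 \;-\; i\vee n\1,
\end{align*}
which forces every $\1_{\{1,\dots,n\}}$ into $D$. Combined with $\1,i\in D$, this yields $D\supseteq \R\1+\R i+\operatorname{span}\{\1_{\{k\}}:k\in\N\}=E$, so $D=E$. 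Hence $E$ itself carries a Banach-space norm.

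To finish, I will exhibit $E$ as a countable increasing union of finite-dimensional subspaces,
\begin{align*}
D_n \;=\; \R\1+\R i+\R\1_{\{1\}}+\cdots+\R\1_{\{n\}},
\end{align*}
noting $\bigcup_n D_n = E$ by the eventually-affine description. Each $D_n$ is finite-dimensional, hence norm-closed in the Banach space $E$; Baire's Category Theorem therefore forces some $D_n$ to have nonempty interior, whence $E=D_n$, contradicting $\dim E=\infty$.

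The only delicate part is the lattice identity used to manufacture the indicators $\1_{\{1,\dots,n\}}$ from $\1$ and $i$; I expect to verify it by a quick pointwise check distinguishing $k\le n$ from $k\ge n+1$. Once that is in hand, the Banach-cover hypothesis collapses $D$ to $E$, and the Baire argument concludes. Everything else, in particular the observation that $\B$ being a Banach cover allows one to collect any two given elements of $E$ inside a common member of $\B$, is immediate from Definition~\ref{def:Banach_cover}.
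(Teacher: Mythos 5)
Your argument is correct and essentially identical to the paper's own proof: the same counterexample $E$, the same use of the Banach-cover axiom to collect $\1$ and $i$ inside a single Riesz subspace $D$, the same lattice identity to generate the indicator functions, and the same Baire category finish. One small point: the displayed identity should read $i\vee(n+1)\1 - i\vee n\1$ rather than $\1\vee(n+1)\1 - i\vee n\1$ (the latter equals $(n+1)-k$ at points $k>n+1$, not $0$); this typo also appears in the paper, and the pointwise check you promise will catch it.
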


\begin{obs}
In Theorem \ref{theorem:cover_by_ordered_Banach_spaces_with_closed_generating_cone} we single out one particular Banach cover $\B$. 
If we consider only Banach covers consisting of directed spaces with closed cones, this $\B$ is the largest and gives us the largest collection of integrable functions. 
Without directedness there may not be a largest Banach cover. For instance, consider Example \ref{example:two_norms_on_Banach_space_give_different_integrals}. 
Impose on $E$ the trivial ordering ($x\le y$ if and only if $x=y$). 
Then $E^+=\{0\}$, and both $\B_1$ and $\B_2$ consist of Banach spaces with closed (but not generating) cones. 
\end{obs}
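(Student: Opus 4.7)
The final observation asserts two independent facts: (i) within the class of Banach covers of $E$ whose members are directed ordered Banach spaces with closed cones, the cover $\B$ of Theorem~\ref{theorem:cover_by_ordered_Banach_spaces_with_closed_generating_cone} is the largest, and yields the largest collection of integrable functions; and (ii) the example in Example~\ref{example:two_norms_on_Banach_space_give_different_integrals}, endowed with the trivial ordering, shows that once directedness is dropped no largest cover need exist. I would treat the two in sequence.

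For (i), the plan is purely definitional. Let $\B'$ be any such competing cover and take $D' \in \B'$. Since $\B'$ is meant as an ordered Banach cover (cf.\ Definition~\ref{def:Banach_cover}), $D'$ is an ordered linear subspace of $E$; by hypothesis it is also a directed ordered Banach space with closed positive cone, so $(D')^+$ is both closed and generating. These are exactly the three conditions defining membership in $\B$, so $D' \in \B$, which gives $\B' \subset \B$. An application of Observation~\ref{obs:inclusion_covers} (with each $D_1 \in \B'$ paired with itself in $\B$, the identity inclusion being trivially continuous) then yields $\fU_{\B'} \subset \fU_{\B}$ together with agreement of the two integrals on $\fU_{\B'}$.

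For (ii), I would first confirm the assertion about the example. Under the trivial ordering $E^+ = \{0\}$, which is manifestly closed but fails to be generating since $E$ is nontrivial; thus both $\B_1$ and $\B_2$ consist of Banach spaces with closed (but non-generating) cones, as claimed. To deduce that no largest Banach cover can exist I would argue by contradiction: if some cover $\B^*$ contained both $(E,\|\cdot\|)$ and $(E,\|\cdot\|_T)$, then by Definition~\ref{def:Banach_cover} some $D \in \B^*$ would continuously contain both copies, and by Observation~\ref{obs:pedantry_about_norm} the norms $\|\cdot\|$ and $\|\cdot\|_T$ would have to be equivalent. This contradicts the choice of $T$ as a discontinuous bijection, so no such $\B^*$ exists; in particular $E$ admits no largest Banach cover.

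Neither half presents a real obstacle. The entire content is carried by the definitions together with the results already recorded in \ref{obs:inclusion_covers} and \ref{obs:pedantry_about_norm}; the only point requiring care is that elements of a Banach cover are couples $(D,\|\cdot\|_D)$ and that the continuity statements in Definition~\ref{def:Banach_cover} and in \ref{obs:pedantry_about_norm} refer to those specific norms, which is precisely what turns norm-inequivalence of $\|\cdot\|$ and $\|\cdot\|_T$ into the obstruction used in~(ii).
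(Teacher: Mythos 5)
Your proposal is correct and follows exactly the route the paper intends (the paper states this observation without a formal proof): part (i) is the definitional inclusion $\B'\subset\B$ combined with \ref{obs:inclusion_covers}, and part (ii) is the Closed Graph argument of \ref{obs:pedantry_about_norm} showing no single Banach cover can contain both $(E,\|\cdot\|)$ and $(E,\|\cdot\|_T)$ when these norms are inequivalent. No gaps.
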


\section{The integral for an Archimedean ordered vector space}
\label{section:Bochner_on_covers2}

As a consequence of Theorem \ref{theorem:closed_cone_implies_order_preservingness_integral} we obtain the following extension of Theorem \ref{theorem:Banach_lattice_cover_implies_Bochner_Riesz_space}.

\begin{theorem}
\label{theorem:closed_cones_in_banach_cover_imply_order_preservingness_integral}
Let $E$ be an ordered vector space with an ordered Banach cover $\B$ so that $D^+$ is closed for all $D\in \B$. 
Then $\fu$ is order preserving.
Moreover, $E$ and $\fU$ are Archimedean. 
\end{theorem}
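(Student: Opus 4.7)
The plan is to handle the three assertions in turn, in each case reducing the question to a single member $D$ of the Banach cover and invoking an earlier result.

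For order preservingness of $\fu$: take $f\in\fU$ with $f\ge 0$. By \ref{obs:definition_B_integral} there is a $D\in\B$ with $f\in\fB_D$. Because $\B$ is an \emph{ordered} Banach cover, $D$ is an ordered subspace of $E$, so the a.e.\ inequality $f\ge 0$ in $E$ is the same as $f\ge 0$ in $\fB_D$ (using the ordering of \ref{obs:facts_on_Bochner}(5)). Since $D^+$ is closed, Theorem \ref{theorem:closed_cone_implies_order_preservingness_integral} applies and yields $\fb_D(f)\ge 0$ in $D$, hence in $E$. As $\fu(f)=\fb_D(f)$, we conclude $\fu(f)\ge 0$.

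For $E$ being Archimedean: given $a,b\in E$ with $na\le b$ for all $n$, use the defining property of a Banach cover to choose a $D\in\B$ containing both $a$ and $b$. Since $D$ is an ordered subspace of $E$, still $na\le b$ in $D$ for all $n$. Because $D^+$ is closed, Theorem \ref{theorem:ordering_determined_by_dual_cone} says $D$ is Archimedean, so $a\le 0$ in $D$, and therefore in $E$. For $\fU$ being Archimedean the scheme is identical: given $f,g\in\fU$ with $nf\le g$ for all $n$, the preceding theorem tells us $\{\fB_D:D\in\B\}$ is a Banach cover of $\fU$, so we may select a single $\fB_D$ containing both $f$ and $g$; by \ref{obs:D_closed_cone_then_Bochner_int_functions_too}, $\fB_D^+$ is closed, whence $\fB_D$ is Archimedean (Theorem \ref{theorem:ordering_determined_by_dual_cone}), and the inequalities $nf\le g$ force $f\le 0$ in $\fB_D$, hence in $\fU$.

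There is no real obstacle, only bookkeeping about which ordering lives where: each of the three arguments has the same shape, using closedness of the relevant positive cone to replace the ambient ordered vector space by a single ordered Banach space in which the machinery of Section \ref{section:useful_banach_spaces} is available. I do not anticipate any estimation step in the proof.
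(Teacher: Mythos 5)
Your proposal is correct and follows essentially the argument the paper intends (the paper states the theorem as a direct consequence, without writing the proof out): order preservingness by reducing to a single $D\in\B$ and applying Theorem \ref{theorem:closed_cone_implies_order_preservingness_integral}, and the Archimedean claims by placing the two relevant elements in one member of the cover ($D$, resp.\ $\fB_D$ with closed cone via \ref{obs:D_closed_cone_then_Bochner_int_functions_too}) and invoking Theorem \ref{theorem:ordering_determined_by_dual_cone}. No gaps worth noting.
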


\begin{lemma}
\label{lemma:map_into_vector_space_giving_a_useful_space}
Let $D$ be an ordered Banach space with a closed generating cone $D^+$. Let $T$ be a linear order preserving map of $D$ into an Archimedean ordered vector space $H$. 
Then $\ker T$ is closed and $T(D)$ equipped with the norm $\|\cdot\|_q$ given by 
\begin{align}
\label{eqn:norm_on_image_T}
\|z\|_q = \inf \{ \|x\| : x\in D, Tx =z\},
\end{align}
has a closed generating cone $T(D)^+$. 
\end{lemma}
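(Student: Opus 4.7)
The plan is to handle both claims by the same mechanism: lift norm-convergent sequences (or approximately convergent ones) from the quotient back to $D$, upgrade to uniform convergence via Theorem~\ref{theorem:equivalences_useful}, push the resulting order inequalities through the linear order-preserving map $T$, and finish by invoking the Archimedean property of $H$. This is the only available substitute for a continuity argument, since $H$ carries no norm.

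First I would show $\ker T$ is closed. Take $x_n \in \ker T$ with $x_n \to x$ in $D$; passing to a subsequence we may assume $\sum_n \|x_n - x\| < \infty$. Since $D^+$ is closed and generating, Theorem~\ref{theorem:equivalences_useful}\ref{item:condition_ru_convergence_summable_seq} produces $a \in D^+$ and $\epsilon_n \downarrow 0$ with $-\epsilon_n a \le x_n - x \le \epsilon_n a$. Applying $T$ and using $Tx_n = 0$ gives $-\epsilon_n T a \le -Tx \le \epsilon_n Ta$ for all $n$; choosing $n$ with $\epsilon_n \le 1/k$ gives $\pm k\, Tx \le Ta$ for every $k$, so Archimedeanness of $H$ forces $Tx = 0$. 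With $\ker T$ closed, $\|\cdot\|_q$ is the quotient norm on $D/\ker T$, making $(T(D), \|\cdot\|_q)$ a Banach space isometric to the quotient.

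Generatingness of $T(D)^+$ is then immediate: any $z = Tx \in T(D)$ can be written as $Ta - Tb$ with $a, b \in D^+$ since $D^+$ is generating, and $Ta, Tb$ lie in $T(D)^+$ because $T$ is order preserving. For closedness of $T(D)^+$, I would imitate the kernel argument at the quotient level. Take $z_n \in T(D)^+$ with $\|z_n - z\|_q \to 0$; pass to a subsequence with $\sum_n \|z_n - z\|_q < \infty$, and by definition of the quotient norm pick $w_n \in D$ with $Tw_n = z_n - z$ and $\|w_n\| \le \|z_n - z\|_q + 2^{-n}$, so $\sum_n \|w_n\| < \infty$. Theorem~\ref{theorem:equivalences_useful} yields $a \in D^+$ and $\epsilon_n \downarrow 0$ with $-\epsilon_n a \le w_n \le \epsilon_n a$; applying $T$ and combining with $z_n \ge 0$ gives $-z \le \epsilon_n Ta - z_n \le \epsilon_n Ta$ for all $n$, and Archimedean for $H$ yields $z \ge 0$.

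The main obstacle is conceptual rather than computational: $H$ has no topology, so "closedness" arguments must be replaced entirely by order-theoretic uniform convergence, and the uniqueness of uniform limits (which relies on $H$ Archimedean) carries the whole argument. The bridge between norm convergence in $D$ (or in the quotient) and uniform convergence is exactly Theorem~\ref{theorem:equivalences_useful}, and once that bridge is crossed the proof is a mechanical application of linearity and order preservation of $T$.
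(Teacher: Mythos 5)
Your proof is correct and follows essentially the same route as the paper: pass to summable subsequences, use Theorem \ref{theorem:equivalences_useful} to upgrade to uniform convergence in $D$, push the order inequalities through $T$, and conclude with the Archimedean property of $H$. The only cosmetic difference is that the paper verifies condition \ref{item:condition_ru_convergence_summable_seq} of Theorem \ref{theorem:equivalences_useful} for $T(D)$ and cites the implication back to closedness of the cone, whereas you inline that same argument directly.
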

\begin{proof}
(I) Let $x_1,x_2,\dots \in \ker T$, $x\in D$, $x_n \rightarrow x$; we prove $x\in \ker T$. 
We assume $\sumn \|x- x_n\| <\infty$. 
By Theorem \ref{theorem:equivalences_useful} $x_n \xrightarrow{u} x$. 
By Lemma \ref{lemma:order_bounded_linear_implies_continuous_wrt_ru} $Tx_n \xrightarrow{u} Tx$, so $Tx=0$. \\
(II) $D/ \ker T$ is a Banach space under the quotient norm $\|\cdot\|_Q$. 
The formula $x+ \ker T \mapsto Tx$ describes a linear bijection $D/\ker T \rightarrow T(D)$ and 
\begin{align}
\|x + \ker T\|_Q = \|Tx\|_q \qquad (x\in D). 
\end{align}
It follows that $\|\cdot\|_q$ is indeed a norm, turning $T(D)$ into a Banach space. 

$T(D^+) \subset T(D)^+$, whence $T(D)$ is directed. 

We prove that $T(D)$ satisfies \ref{item:condition_ru_convergence_summable_seq} of Theorem \ref{theorem:equivalences_useful}: 
Let $z_1,z_2,\dots \in T(D)$ be such that $\sumn \|z_n\|_q<\infty$. 
Choose $x_n\in D$ such that $T x_n = z_n$, $\sumn \|x_n\|<\infty$. 
Using \ref{item:condition_ru_convergence_summable_seq} for $D$, $x_n \unifrightarrow 0$. Then $z_n= Tx_n \unifrightarrow 0$ by Lemma \ref{lemma:order_bounded_linear_implies_continuous_wrt_ru}.
\end{proof}

\begin{obs}
In the proof of Lemma \ref{lemma:map_into_vector_space_giving_a_useful_space} we mentioned the inclusion $T(D^+) \subset T(D)^+$. 
This inclusion can be strict. 
Take $D= H=\R^2$, $T(x,y) = (x, x+y)$. 
Then $T(D^+) \ne T(D)^+$. 
\end{obs}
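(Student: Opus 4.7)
The plan is just to verify the counterexample. Equip $D=H=\R^2$ with the standard cone $(\R^2)^+ = \{(x,y) : x \ge 0,\ y \ge 0\}$. The map $T(x,y) = (x, x+y)$ is plainly linear, and if $x\ge 0$ and $y\ge 0$ then $x \ge 0$ and $x+y\ge 0$, so $T$ is order preserving. Moreover $T$ is a bijection with inverse $(u,v)\mapsto (u,v-u)$, so $T(D)=H=\R^2$ and therefore $T(D)^+$ is the full standard cone $\{(u,v):u\ge 0,\ v\ge 0\}$.

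On the other hand, $T(D^+) = \{(x,\,x+y) : x,y\ge 0\} = \{(u,v): 0 \le u \le v\}$. These two cones differ: the element $(1,0)$ lies in $T(D)^+$, since both coordinates are nonnegative, but it does not lie in $T(D^+)$, because the only preimage of $(1,0)$ under $T$ is $(1,-1)\notin D^+$. Hence $T(D^+)\subsetneq T(D)^+$, which is all that needs to be observed; there is no real obstacle beyond exhibiting such a witness.
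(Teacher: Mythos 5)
Your verification is correct and is exactly the computation the paper has in mind (the paper merely states the example without spelling out the details). The witness $(1,0)\in T(D)^+\setminus T(D^+)$ with unique preimage $(1,-1)\notin D^+$ settles it.
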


From Theorem \ref{theorem:order_bounded_linear_between_useful_implies_continuous}, \ref{obs:facts_on_Bochner}\ref{item:Bochner_and_closed_map} and Lemma \ref{lemma:map_into_vector_space_giving_a_useful_space} we get:

\begin{theorem}
\label{theorem:composition_order_bounded_linear_map}
Let $E_1, E_2$ be ordered vector spaces, $E_i$ endowed with the Banach cover $\B_i$ consisting of the ordered Banach spaces with closed generating cones. 
Let $T: E_1 \rightarrow E_2$ be linear and order preserving. 
If $f: X \rightarrow E_1$ is $\B_1$-integrable, then $T\circ f: X \rightarrow E_2$ is $\B_2$-integrable, and $\fu_2( T \circ f ) = T( \fu_1( f ))$. 
\end{theorem}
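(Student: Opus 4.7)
The plan is to follow the hint in the paper essentially verbatim: witness the $\B_1$-integrability of $f$ by some $D_1\in\B_1$, route $T$ through $D_1$ to manufacture a target space in $\B_2$, and close with the standard behaviour of the Bochner integral under continuous linear maps.

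First I would unpack the definition of $\B_1$-integrability (\ref{obs:definition_B_integral}) to choose a $D_1\in\B_1$ (an ordered Banach subspace of $E_1$ with closed generating cone) such that, after modification on a $\mu$-null set, $f$ is Bochner integrable as a map $X\to D_1$ with $b_{D_1}(f)=\fu_1(f)$. I then look at the restriction $T|_{D_1}:D_1\to E_2$, which is linear and order preserving. Since $E_2$ carries a Banach cover consisting of ordered Banach spaces with closed cones, it is Archimedean (Theorem \ref{theorem:closed_cones_in_banach_cover_imply_order_preservingness_integral}), so Lemma \ref{lemma:map_into_vector_space_giving_a_useful_space} applies to $T|_{D_1}$: $\ker T|_{D_1}$ is closed and $T(D_1)$, equipped with the quotient norm
\begin{align}
\|z\|_q=\inf\{\|x\|_{D_1}:x\in D_1,\ T x=z\},
\end{align}
is an ordered Banach space with closed generating cone. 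In particular, $T(D_1)\in\B_2$.

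Next I would verify that, regarded as a map $D_1\to T(D_1)$, the restriction $T|_{D_1}$ is continuous. This is immediate from the definition of $\|\cdot\|_q$ (we have $\|Tx\|_q\le\|x\|_{D_1}$), and is also a consequence of Theorem \ref{theorem:order_bounded_linear_between_useful_implies_continuous} applied to this order bounded linear map between ordered Banach spaces with closed generating cones. Now \ref{obs:facts_on_Bochner}\ref{item:Bochner_and_closed_map} gives that $T\circ f$ is Bochner integrable as a map $X\to T(D_1)$ with $b_{T(D_1)}(T\circ f)=T(b_{D_1}(f))=T(\fu_1(f))$. Unwinding the definition of $\B_2$-integrability, this says precisely that $T\circ f\in\fU_2$ and $\fu_2(T\circ f)=T(\fu_1(f))$.

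The only nontrivial step is the construction of a Banach space in $\B_2$ containing the image of $T\circ f$ in which the restriction of $T$ is continuous; this is exactly the content of Lemma \ref{lemma:map_into_vector_space_giving_a_useful_space} together with Theorem \ref{theorem:order_bounded_linear_between_useful_implies_continuous}. Everything else is bookkeeping.
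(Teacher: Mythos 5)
Your proposal is correct and follows essentially the same route the paper intends: the paper derives the theorem directly from Lemma \ref{lemma:map_into_vector_space_giving_a_useful_space} (giving $T(D_1)\in\B_2$ under the quotient norm), the continuity of the restriction of $T$ (Theorem \ref{theorem:order_bounded_linear_between_useful_implies_continuous}, or simply $\|Tx\|_q\le\|x\|_{D_1}$), and \ref{obs:facts_on_Bochner}\ref{item:Bochner_and_closed_map}, exactly as you spell out. Your fleshing-out of the bookkeeping (choice of $D_1$, Archimedeanness of $E_2$, modification on a null set) is accurate and complete.
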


\begin{obs}
In view of Theorem \ref{theorem:order_bounded_linear_between_useful_implies_continuous} 
the reader may wonder why in Theorem \ref{theorem:composition_order_bounded_linear_map} $T$ is required to be order preserving and not just order bounded, 
the more so because of the following considerations. 
Let $D$ and $H$ be as in Lemma \ref{lemma:map_into_vector_space_giving_a_useful_space} and $T$ be a linear order bounded map of $D$ into $H$. 
As the implication \ref{item:archi_and_ru_condition} $\Longrightarrow$ \ref{item:useful} of Theorem \ref{theorem:equivalences_useful} is valid for Archimedean (but not necessarily directed) $D$, 
following the lines of the proof of Lemma \ref{lemma:map_into_vector_space_giving_a_useful_space} $\ker T$ is closed and $T(D)$ equipped with the norm as in \eqref{eqn:norm_on_image_T} has a closed cone $T(D)^+$. 
However, we also need $T(D)$ to be directed and order boundedness of $T$ is no guarantee for that. 

An alternative approach might be to drop the directedness condition on the spaces that constitute $\B$. 
However, the ordered Banach spaces with closed cones may not form a Banach cover. 

For an example, let $E$ be $\ell^\infty$ and let $\B$ be the collection of all ordered Banach spaces that are subspaces of $\ell^\infty$ and have closed cones. 
We make $D_1,D_2\in \B$. 
For $D_1$ we take $\ell^\infty$ with the usual norm $\|\cdot\|_\infty$. 
Choose a linear bijection $T: \ell^\infty \rightarrow \ell^\infty$ that is not continuous. 
For $a\in \ell^\infty$ put $a' = (a_1,-a_1,a_2,-a_2,\dots)$. 
For $D_2$ we take the vector space $\{ a' : a\in \ell^\infty\}$ with the norm $\|\cdot\|_T$ given by $\|a'\|_T= \|Ta\|_\infty$. 
Then $D_2$ is a Banach space and $D_2^+$, begin $\{0\}$, is closed in $D_2$. 
Suppose $\B$ is a Banach cover. 
Let $D$ be as in Definition \ref{def:Banach_cover}. 
By the Open Mapping Theorem the identity map $D_1 \rightarrow D$ is a homeomorphism. 
By the continuity of the inclusion map $D_2 \rightarrow D$ there exists a number $c$ such that $\|a'\|_T \le c\|a'\|_\infty$ for all $a\in \ell^\infty$. 
Then $\|Ta\|_\infty \le c\|a'\|_\infty \le c\|a\|_\infty$ for $a\in \ell^\infty$, so $T$ is continuous. Contradiction. 
\end{obs}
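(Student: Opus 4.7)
The plan is to justify the two distinct assertions contained in this observation. For the first, that relaxing ``order preserving'' to ``order bounded'' in Theorem \ref{theorem:composition_order_bounded_linear_map} still buys us most, but not all, of Lemma \ref{lemma:map_into_vector_space_giving_a_useful_space}, I would revisit that lemma's proof step by step with $T: D \to H$ only order bounded. Step (I) used only that $T$ preserves uniform convergence, which holds for any linear order bounded map by Lemma \ref{lemma:order_bounded_linear_implies_continuous_wrt_ru}, so $\ker T$ is still closed. Step (II) establishing that $T(D) \cong D/\ker T$ is Banach under the quotient norm $\|\cdot\|_q$ is a purely linear-topological fact. Closedness of $T(D)^+$ was deduced via the implication \ref{item:archi_and_ru_condition} $\Rightarrow$ \ref{item:useful} of Theorem \ref{theorem:equivalences_useful}; inspecting that argument one sees it uses only that the ambient space is Archimedean, not that it is directed. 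What actually breaks is the inclusion $T(D^+) \subset T(D)^+$ (see the remark following Lemma \ref{lemma:map_into_vector_space_giving_a_useful_space}): without order preservingness one has no guarantee that $T(D)$ is directed, so $T(D)$ need not belong to $\B_2$.

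For the second assertion, that the family of ordered Banach subspaces of $E = \ell^\infty$ with closed (possibly trivial) cones is not a Banach cover, I would construct two such spaces that cannot be jointly dominated. Take $D_1 = (\ell^\infty, \|\cdot\|_\infty)$ with its usual cone, and fix a linear bijection $T: \ell^\infty \to \ell^\infty$ that is not $\|\cdot\|_\infty$-continuous (existence by a Hamel basis argument). Define the subspace $D_2 = \{a' : a \in \ell^\infty\}$ of $\ell^\infty$, where $a' := (a_1, -a_1, a_2, -a_2, \dots)$, and give it the norm $\|a'\|_T := \|Ta\|_\infty$. I would then check that $a \mapsto a'$ is a linear bijection from $\ell^\infty$ onto $D_2$ under which $\|\cdot\|_T$ corresponds to $a \mapsto \|Ta\|_\infty$; the latter makes $\ell^\infty$ complete (since $T$ is bijective and $\|\cdot\|_\infty$ is complete), so $(D_2, \|\cdot\|_T)$ is Banach. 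Moreover a nonzero $a'$ cannot be coordinatewise $\ge 0$, because the $(2i{-}1)$-th and $2i$-th coordinates force $a_i = 0$; thus $D_2^+ = \{0\}$, which is trivially closed. Hence $D_1, D_2 \in \B$.

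Now I would derive a contradiction from the assumption that $\B$ is a Banach cover. Pick $D \in \B$ containing $D_1$ and $D_2$ with both inclusions continuous. Since $D_1 = E$ as a vector space and $D \subset E$ by definition, $D$ equals $\ell^\infty$ as a vector space. The inclusion $(\ell^\infty, \|\cdot\|_\infty) \to (\ell^\infty, \|\cdot\|_D)$ is then a continuous linear bijection between Banach spaces, hence a homeomorphism by the Open Mapping Theorem, so the norms $\|\cdot\|_D$ and $\|\cdot\|_\infty$ are equivalent. Continuity of the inclusion $D_2 \hookrightarrow D$ then yields a constant $c > 0$ with $\|a'\|_T \le c\|a'\|_D \le c'\|a'\|_\infty = c'\|a\|_\infty$ for every $a \in \ell^\infty$; unravelling the definition of $\|a'\|_T$, this reads $\|Ta\|_\infty \le c'\|a\|_\infty$, contradicting the choice of $T$.

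The main subtlety I expect is shaping $D_2$ correctly: it must be a genuine infinite dimensional Banach subspace of $\ell^\infty$ whose positive cone is closed ``for free'' (because it is $\{0\}$), while its norm must globally encode the pathology of $T$. The alternating-sign embedding $a \mapsto a'$ threads both needles simultaneously, so once the bookkeeping for $D_2$ is in place, the Open Mapping argument on $D_1 \subset D$ forces any dominating $D$ to have a norm equivalent to $\|\cdot\|_\infty$, and the continuity of the $D_2$-inclusion then pulls the required contradiction back to $T$.
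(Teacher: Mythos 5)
Your proposal reproduces the paper's own argument essentially verbatim in both halves: the step-by-step reinspection of the proof of Lemma \ref{lemma:map_into_vector_space_giving_a_useful_space} for a merely order bounded $T$ (closedness of $\ker T$ and of $T(D)^+$ survive, directedness of $T(D)$ does not), and the $\ell^\infty$ counterexample built from the alternating-sign embedding $a\mapsto a'$ and a discontinuous bijection $T$. One small point (present in the paper's text as well): continuity of the inclusion $D_2\to D$ gives $\|a'\|_D\le c\|a'\|_T$, not $\|a'\|_T\le c\|a'\|_D$ as you wrote; the contradiction still follows, either by applying the Open Mapping Theorem once more to the identity $(D_2,\|\cdot\|_T)\to(D_2,\|\cdot\|_\infty)$ (both complete, since $a\mapsto a'$ is a $\|\cdot\|_\infty$-isometry of $\ell^\infty$ onto $D_2$), or by observing that the resulting bound $\|a\|_\infty\le c'\|Ta\|_\infty$ makes $T^{-1}$ bounded and hence $T$ bounded by the bounded inverse theorem.
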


\begin{theorem}
\label{theorem:separating_order_dual_and_pettis_like_integration}
Let $E$ be an ordered vector space such that $E^\sim$ separates the points of $E$.\footnote{We write $E^\sim$ for the space of order bounded linear maps $E\rightarrow \R$.} 
Assume $\B$ is a Banach cover consisting of ordered Banach spaces with closed generating  cones. 
Let $f\in \fU$. Then $\alpha \circ f \in \cL^1(\mu)$ for all $\alpha \in E^\sim$. 
Moreover, $I\in E$ is such that 
\begin{align}
\label{eqn:composition_order_dual_element}
\alpha(I) = \int \alpha \circ f \D \mu \ \mbox{ for all } \ \alpha \in E^\sim,
\end{align}
if and only if $I=\fu(f)$. 
\end{theorem}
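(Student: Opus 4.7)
The plan is to reduce everything to the Bochner case by restricting each $\alpha \in E^\sim$ to a suitable $D \in \B$ and then invoking the continuity theorem for order bounded maps.

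First, since $f \in \fU$, by \ref{obs:definition_B_integral} there is some $D \in \B$ with $f \in \fB_D$, and $\fu(f) = \fb_D(f)$. Fix an arbitrary $\alpha \in E^\sim$ and consider its restriction $\alpha|_D \colon D \to \R$. Because $\alpha$ is order bounded on $E$, its restriction to the ordered linear subspace $D$ is order bounded as well. Now $D$ is an ordered Banach space with closed generating cone (it lies in $\B$) and $\R$ has a closed cone, so Theorem \ref{theorem:order_bounded_linear_between_useful_implies_continuous} applies and gives that $\alpha|_D$ is continuous.

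Next, since $\alpha|_D$ is a continuous linear map from $D$ to the Banach space $\R$, I apply \ref{obs:facts_on_Bochner}\ref{item:Bochner_and_closed_map} to conclude that $\alpha \circ f = (\alpha|_D) \circ f$ is Bochner integrable as an $\R$-valued function, i.e.\ $\alpha \circ f \in \cL^1(\mu)$, and
\begin{align}
\alpha(\fu(f)) \;=\; \alpha(\fb_D(f)) \;=\; (\alpha|_D)(\fb_D(f)) \;=\; \int \alpha \circ f \D \mu.
\end{align}
This already shows that $I = \fu(f)$ satisfies \eqref{eqn:composition_order_dual_element}, which is the ``if'' direction.

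Finally, for the ``only if'' direction (uniqueness), suppose $I \in E$ satisfies $\alpha(I) = \int \alpha \circ f \D \mu$ for every $\alpha \in E^\sim$. Combining this with the identity just derived for $\fu(f)$, we obtain $\alpha(I - \fu(f)) = 0$ for all $\alpha \in E^\sim$. By the assumption that $E^\sim$ separates the points of $E$, this forces $I - \fu(f) = 0$, so $I = \fu(f)$.

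The only non-routine step is the continuity of $\alpha|_D$, which is exactly where the hypothesis that each $D \in \B$ has a closed generating cone is essential (so that Theorem \ref{theorem:order_bounded_linear_between_useful_implies_continuous} is available); the rest is bookkeeping with the definitions of $\fu$ and $\fB_D$ together with the point-separation hypothesis on $E^\sim$.
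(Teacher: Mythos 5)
Your proposal is correct, and it is in substance the route the paper takes, just inlined: the paper's proof is a two-line appeal to Theorem \ref{theorem:composition_order_bounded_linear_map} (composition of an integrable function with an order preserving map) plus the separation hypothesis, and that composition theorem is itself proved from exactly the ingredients you use, namely Theorem \ref{theorem:order_bounded_linear_between_useful_implies_continuous} and \ref{obs:facts_on_Bochner}\ref{item:Bochner_and_closed_map}. Your unfolding has one genuine advantage worth noting: Theorem \ref{theorem:composition_order_bounded_linear_map} is stated only for order \emph{preserving} maps (and the remark following it explains why: for merely order bounded $T$ the image $T(D)$ need not be directed, so Lemma \ref{lemma:map_into_vector_space_giving_a_useful_space} does not apply), whereas the statement here concerns arbitrary $\alpha\in E^\sim$, which are only order \emph{bounded}. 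Because your target space is the fixed ordered Banach space $\R$, whose cone is closed, you can skip the image-space construction of Lemma \ref{lemma:map_into_vector_space_giving_a_useful_space} entirely and apply Theorem \ref{theorem:order_bounded_linear_between_useful_implies_continuous} directly to $\alpha|_D$; this handles order bounded functionals cleanly, where the paper's literal citation requires a word of justification. The remaining steps (restriction of an order bounded functional to $D$ is order bounded, well-definedness up to $\mu$-null sets, and the separation argument for uniqueness) are all fine.
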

\begin{proof}
Let $f\in \fU$ and let $I= \fu(f)$. 
By Theorem \ref{theorem:composition_order_bounded_linear_map} $\alpha \circ f \in \cL^1(\mu)$ for all $\alpha \in E^\sim$ and \eqref{eqn:composition_order_dual_element} holds. 
$I=\fu(f)$ is the only element of $E$ for which \eqref{eqn:composition_order_dual_element} holds because $E^\sim$ separates the points of $E$. 
\end{proof}

\begin{remark}
Functions with values in a Banach space that are Bochner integrable are also Pettis integrable. 
To some extent the statement of Theorem \ref{theorem:separating_order_dual_and_pettis_like_integration} is similar. 
Indeed, the definition of Pettis integrability could be generalised for vector spaces $V$ which are equipped with a set $S$ of linear maps $V \rightarrow \R$ that separates the points of $V$, in the sense that one calls a function $f: X \rightarrow V$ Pettis integrable if $\alpha \circ f \in \cL^1(\mu)$ for all $\alpha \in S$ and there exists a $I\in V$ such that $\alpha(I)= \int \alpha \circ f \D \mu$ for all $\alpha \in S$. 
Then Theorem \ref{theorem:separating_order_dual_and_pettis_like_integration} implies that every $f\in \fU$ is Pettis integrable when considering $V=E$ and $S=E^\sim$.
Observe, however, that even for a Riesz space $E$, $E^\sim$ may be trivial (see, e.g., \cite[5.A]{JoRo77IntroRiesz}). \todolater{evt andere citatie?}
\end{remark}

\section{Comparison with other integrals}
\label{section:comparing_integrals}

\textbf{In this section $(X,\cA,\mu)$ is a complete $\sigma$-finite measure space and $E$ is a directed ordered vector space with an ordered Banach cover $\B$ so that $D^+$ is closed for each $D\in \B$. 
}


In \ref{obs:definition_B_integral} we have introduced an integral $\fu$ on a space $\fU$ of $\B$-integrable functions $X \rightarrow E$.\footnote{In this section we close an eye for the difference between a function and its equivalence class. There will be no danger of confusion.}
In \cite{vRvZ}, starting from a natural integral $\varphi$ on the space $S$ of all simple functions $X\rightarrow E$ we have made integrals $\varphi_V, \varphi_L, \varphi_{LV}, \dots$ on spaces $S_V, S_L, S_{LV},\dots$. 

There is an elementary connection: $S$ is part of $\fU$ and $\fu$ coincides with $\varphi$ on $S$. (Indeed, let $f\in S$. Being a finite set, $f(X)$ is contained in $D$ for some $D\in \B$. Then $f$ is Bochner integrable as a map $X\rightarrow D$.) 

In general, $S_V$ and $S_L$ are not subsets of $\fU$, but we can prove that $\fu$ coincides with $\varphi_V$ on $S_V \cap \fU$ and with $\varphi_L$ on $S_L \cap \fU$. 
Better than that: $\fu$ is ``compatible'' with $\varphi_V$ in the sense that $\fu$ and $\varphi_V$ have a common order preserving linear extension $S_V + \fU \rightarrow E$. Similarly, $\fu$ is ``compatible with $\varphi_L$, $\varphi_{LV},\dots$''. 

\begin{lemma}
\begin{enumerate}
\item Let $f\in \fU$, $g\in S_{V}$, $f\le g$. Then $\fu(f) \le \varphi_V(g)$. 
\label{item:fU_function_and_V_function_integrals_order_compared}
\item Let $f\in \fU$, $g\in S_{L}$, $f\le g$. Then $\fu(f) \le \varphi_L(g)$. 
\label{item:fU_function_and_L_function_integrals_order_compared}
\end{enumerate}
\end{lemma}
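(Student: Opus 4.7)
The plan is to exploit two previously-established facts: first, $\fu$ is order preserving on $\fU$ (Theorem \ref{theorem:closed_cones_in_banach_cover_imply_order_preservingness_integral}), and second, $\fu$ coincides with $\varphi$ on $S$, as noted in the paragraph preceding the lemma. Both parts then reduce to the same schema: approximate $g$ from above by simple dominators $s_n \in S$ whose $\varphi$-images approximate $\varphi_V(g)$ or $\varphi_L(g)$, apply order preservingness of $\fu$ to the chain $f \le g \le s_n$, and pass to the limit in $E$.

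For part (a), invoking the construction of $S_V$ and $\varphi_V$ from \cite{vRvZ}, I would select simple functions $s_n \in S$ with $s_n \ge g$ whose $\varphi$-integrals approximate $\varphi_V(g)$ uniformly in $E$, i.e., $\varphi(s_n) - \varphi_V(g) = r_n$ with $r_n \unifrightarrow 0$ along some regulator $a \in E^+$. Since $f \le g \le s_n$ and $s_n \in \fU$, order preservingness of $\fu$ gives
\begin{align*}
\fu(f) \le \fu(s_n) = \varphi(s_n) \qquad (n \in \N),
\end{align*}
so $\varphi_V(g) - \fu(f) = (\varphi(s_n) - \fu(f)) - r_n \ge -r_n$ for every $n$. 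The Archimedean property of $E$ then forces $\varphi_V(g) - \fu(f) \ge 0$. Part (b) proceeds identically, with simple dominators of $g$ chosen to approximate $\varphi_L(g)$; alternatively, one inserts an intermediate $h \in S_V$ with $g \le h$ and $\varphi_V(h)$ close to $\varphi_L(g)$, then reduces to part (a) through the chain $\fu(f) \le \varphi_V(h)$ followed by a limit.

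The main obstacle is that this proof uses the constructions of $S_V, \varphi_V, S_L, \varphi_L$ in \cite{vRvZ} in black-box fashion, and the limit passage depends on those integrals being defined via uniform (Archimedean order) convergence so that error terms are absorbed by the cone. If the convergence mode used there is weaker, one would need to transfer the estimate into a specific Banach space $D \in \B$ containing both $\fu(f)$ and $\varphi_V(g)$ and invoke closedness of $D^+$ together with Theorem \ref{theorem:ordering_determined_by_dual_cone} to conclude positivity.
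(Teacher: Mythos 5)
Your part (a) contains an unjustified step, though the conclusion is salvageable by a simpler route. In \cite{vRvZ} the vertical extension is defined by an order infimum: $\varphi_V(g)=\inf\{\varphi(h):h\in S,\ h\ge g\}$ (equal to the corresponding supremum from below). There is no reason this infimum is attained as a uniform limit of a sequence $\varphi(s_n)$ with $s_n\in S$, $s_n\ge g$ --- an order infimum need not be approachable by any sequence in any convergence mode --- so your postulated regulators $r_n\unifrightarrow 0$ need not exist. But you do not need them: for \emph{every} $h\in S$ with $h\ge g$ you have $f\le h$, hence $\fu(f)\le\fu(h)=\varphi(h)$ by order preservingness of $\fu$ and the identification $\fu=\varphi$ on $S$; thus $\fu(f)$ is a lower bound of the set whose infimum defines $\varphi_V(g)$, and $\fu(f)\le\varphi_V(g)$ follows from the definition of infimum, with no limit passage and no Archimedean argument. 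This is exactly the paper's proof of (a).

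The genuine gap is in part (b). Your claim that it ``proceeds identically, with simple dominators of $g$'' fails because an element of $S_L$ is in general not dominated by any simple function, nor by any element of $S_V$ (the lateral extension exists precisely to handle functions built from countably many pieces, e.g.\ functions on $\N$ with unbounded ``tails'', which are not order bounded by simple functions); so neither your main route nor your alternative via an intermediate $h\in S_V$ with $g\le h$ can get started. The paper's argument is structurally different: write $g=g_1-g_2$ with $g_1,g_2\in S_L^+$, take a common $\varphi$-partition $(B_i)$ of both, set $A_n=B_1\cup\dots\cup B_n$, and truncate in the \emph{domain} rather than approximate in $E$: from $f\1_{A_n}\le g\1_{A_n}$ with $g\1_{A_n}$ simple, part (a) gives $\fu(f\1_{A_n})\le\varphi_L(g_1)-\varphi(g_2\1_{A_k})$ for $k<n$; then one lets $n\to\infty$ using the dominated convergence theorem for the Bochner integral (\ref{obs:facts_on_Bochner}\ref{item:dominated_convergence_Bochner}), and finally lets $k\to\infty$ using that $\varphi_L(g_2)$ is by definition the supremum of the $\varphi(g_2\1_{A_k})$. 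This truncation-plus-dominated-convergence device is the missing idea; your final remark about transferring the estimate into some $D\in\B$ with closed cone addresses how to preserve inequalities under norm limits, but not the absence of dominators for $g\in S_L$.
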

\begin{proof}
\ref{item:fU_function_and_V_function_integrals_order_compared}
By the definition of $\varphi_V$ and by the text preceding this lemma 
we have $\varphi_V(g) = \inf \{ \varphi(h) : h\in S, h \ge g \} = \inf \{ \fu(h) : h\in S, h\ge g\}$. 
As $g \ge f$ and $\fu$ is order preserving (Theorem \ref{theorem:closed_cones_in_banach_cover_imply_order_preservingness_integral}), it follows that $\varphi_V(g) \ge \fu(f)$. 

\ref{item:fU_function_and_L_function_integrals_order_compared}
Let $g\in S_L$ and assume $f\le g$. 
Let $g_1,g_2\in S_L^+$ be such that $g= g_1 - g_2$. 
Let $(B_i)_{i\in\N}$ be a $\varphi$-partition for both $g_1$ and $g_2$. 
Write $A_n = \bigcup_{i=1}^n B_i$ for $n\in\N$. 
Then $f\1_{A_n} \le g\1_{A_n}$, thus by (a) (and Theorem \ref{theorem:closed_cones_in_banach_cover_imply_order_preservingness_integral})
\begin{align}
\fu(f\1_{A_n}) 
\notag &\le \fu (g \1_{A_n}) = \varphi (g\1_{A_n}) 
= \varphi(g_1 \1_{A_n}) - \varphi(g_2 \1_{A_n}) \\
& \le \varphi_L(g_1) - \varphi(g_2 \1_{A_k }) \qquad (k\in\N, k<n).
\end{align}
Which implies $\fu(f\1_{A_n}) + \varphi(g_2\1_{A_k}) \le  \varphi_L(g_1)$ for all $k<n$. Then letting $n$ tend to $\infty$ (apply \ref{obs:facts_on_Bochner}\ref{item:dominated_convergence_Bochner}: $f(x)\1_{A_n}(x) \rightarrow f(x)$ for all $x\in X$) we obtain
\begin{align}
\fu(f)  \le \varphi_L(g_1) - \varphi(g_2\1_{A_k}) \qquad (k\in\N),
\end{align}
from which we conclude $\fu(f) \le \varphi_L(g)$. 
\end{proof}

\begin{theorem}
\label{theorem:comparing_S_LV_with_fU} \
\begin{enumerate}
\item If $g\in S_{LV}$ and $f\le g$, then $\fu(f) \le \varphi_{LV}(g)$. 
\item If $S_V$ is stable,  $g\in S_{VLV}$ and $f\le g$, then $\fu(f) \le \varphi_{VLV}(g)$. 
\end{enumerate}
\end{theorem}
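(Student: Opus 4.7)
My plan is to bootstrap the preceding lemma by iterating the $V$- and $L$-operations.

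For part~(a), the key is the very definition $\varphi_{LV}(g) = \inf\{\varphi_L(h) : h \in S_L,\ h \ge g\}$. For any such majorant $h \in S_L$ one has $h \ge g \ge f$ with $f \in \fU$, so the preceding lemma~\ref{item:fU_function_and_L_function_integrals_order_compared} yields $\fu(f) \le \varphi_L(h)$. Taking the infimum over $h$ gives $\fu(f) \le \varphi_{LV}(g)$.

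For part~(b) I would proceed in two steps. The first is an intermediate statement: if $f \in \fU$, $h \in S_{VL}$, and $f \le h$, then $\fu(f) \le \varphi_{VL}(h)$. Granting this, part~(b) follows by the same infimum argument as in~(a), using $\varphi_{VLV}(g) = \inf\{\varphi_{VL}(h) : h \in S_{VL},\ h \ge g\}$ and the fact that every such $h$ dominates $f$.

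The proof of the intermediate statement would mirror exactly the proof of the preceding lemma~\ref{item:fU_function_and_L_function_integrals_order_compared}, with $\varphi$ and $S$ replaced by $\varphi_V$ and $S_V$: write $h = h_1 - h_2$ with $h_1, h_2 \in S_{VL}^+$, pick a common $\varphi_V$-partition $(B_i)_{i\in\N}$ of $h_1$ and $h_2$, and set $A_n = \bigcup_{i=1}^n B_i$. Stability of $S_V$ is exactly what ensures that each truncation $h\1_{A_n} = h_1\1_{A_n} - h_2\1_{A_n}$ lies in $S_V$ and that $\varphi_V$ splits linearly over this decomposition. Since $f\1_{A_n} \in \fU$ and $f\1_{A_n} \le h\1_{A_n}$, the preceding lemma~\ref{item:fU_function_and_V_function_integrals_order_compared} gives
\[
\fu(f\1_{A_n}) \le \varphi_V(h\1_{A_n}) = \varphi_V(h_1\1_{A_n}) - \varphi_V(h_2\1_{A_n}) \le \varphi_{VL}(h_1) - \varphi_V(h_2\1_{A_k})
\]
for all $k < n$. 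Letting $n \to \infty$ with dominated convergence (\ref{obs:facts_on_Bochner}\ref{item:dominated_convergence_Bochner}) on the left, and then $k \to \infty$, reproduces the limiting schema of the preceding lemma and yields the intermediate claim.

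The main obstacle is the intermediate step, and specifically pinning down the role of stability of $S_V$: one must verify that it delivers both the closure property $h\1_{A_n} \in S_V$ on finite unions of partition pieces and the linearity of $\varphi_V$ needed to split the bound into the $h_1$- and $h_2$-parts. Once this bookkeeping is in place, the rest is a mechanical iteration of the argument of the preceding lemma.
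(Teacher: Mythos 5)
Your proposal is correct and follows essentially the paper's own route: the paper's proof is simply ``follow the lines of the proof of the lemma with $S_V$, $S_L$ or $S_{VL}$ instead of $S$'', which is exactly the iteration you carry out — part (a) by taking the infimum over $S_L$-majorants and invoking part \ref{item:fU_function_and_L_function_integrals_order_compared} of the lemma, part (b) via the intermediate claim for $S_{VL}$ proved by repeating the lemma's partition argument with $\varphi_V$ in place of $\varphi$. One minor remark: in that intermediate step the memberships $h_j\1_{A_n}\in S_V$ already follow from the definition of a $\varphi_V$-partition together with linearity of $S_V$; the stability hypothesis on $S_V$ is needed chiefly so that $S_{VL}$ and $S_{VLV}$ exist at all, as the paper's comment following the theorem indicates.
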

\begin{proof}
Follow the lines of the proof of the lemma with $S_V$, $S_L$ or $S_{VL}$ instead of $S$. 
\end{proof}

\begin{obs}[Comments on Theorem \ref{theorem:comparing_S_LV_with_fU}] \
\begin{enumerate}[label=(\arabic*)]
\item The theorem supersedes the lemma because $S_V + S_L \subset S_{LV}$. 
\item As a consequence, $\fu = \varphi_{LV}$ on $\fU \cap S_{LV}$, and $\fu= \varphi_{VLV}$ on $\fU \cap S_{VLV}$ if $S_V$ is stable. 
\item Recall that stability of $S_V$ is necessary for the existence of $S_{VLV}$. 
\end{enumerate}
\end{obs}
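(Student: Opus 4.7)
The three comments are of quite different characters, so I would address them separately, with (1) being a structural inclusion, (2) a consequence of combining (1) with the theorem, and (3) a recollection from \cite{vRvZ}.

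For (1), to prove $S_V + S_L \subset S_{LV}$, I would show $S_V \subset S_{LV}$ and $S_L \subset S_{LV}$ separately and then combine using that $S_{LV}$ is a linear subspace. The inclusion $S_V \subset S_{LV}$ is immediate from the construction: $S_{LV}$ is obtained from $S_V$ by applying the $L$-extension, and any set sits inside its $L$-extension. The inclusion $S_L \subset S_{LV}$ follows from the monotonicity of the $L$-operation applied to the chain $S \subset S_V$, giving $S_L \subset (S_V)_L = S_{LV}$. Closure under addition then yields the stated inclusion.

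For (2), the equality $\fu = \varphi_{LV}$ on $\fU \cap S_{LV}$ falls out by applying Theorem \ref{theorem:comparing_S_LV_with_fU}(a) twice. Given $f \in \fU \cap S_{LV}$, taking $g = f$ (legitimate since $f \le f$) yields $\fu(f) \le \varphi_{LV}(f)$. For the reverse inequality, since both $\fU$ and $S_{LV}$ are vector spaces, $-f$ lies in $\fU \cap S_{LV}$; applying the theorem to $-f \le -f$ and invoking the linearity of $\fu$ and $\varphi_{LV}$ flips the inequality into $\varphi_{LV}(f) \le \fu(f)$. The same two-sided squeeze, now using Theorem \ref{theorem:comparing_S_LV_with_fU}(b), delivers $\fu = \varphi_{VLV}$ on $\fU \cap S_{VLV}$, under the stability hypothesis that the theorem already requires.

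Point (3) is not a claim to prove in this paper but a recollection of a fact from \cite{vRvZ}: the $V$-operation can be iterated only when applied to a stable space, so $S_{VLV} = (S_{VL})_V$ makes sense exactly when $S_{VL}$ is stable, a condition that is in turn secured by the stability of $S_V$. The main obstacle I foresee lies in (2): the argument by negation relies on $\varphi_{LV}$ being genuinely linear (in particular, $\varphi_{LV}(-f) = -\varphi_{LV}(f)$) and on $S_{LV}$ being closed under negation; both are properties of the construction in \cite{vRvZ} that must be cited rather than re-derived here, and the same remark applies to the $VLV$-statement.
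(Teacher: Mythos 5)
Your proposal is correct and follows exactly the reasoning the paper leaves implicit: the observation is stated without proof, and the two-sided application of Theorem \ref{theorem:comparing_S_LV_with_fU} to $f\le f$ and $-f\le -f$, using the linearity of $\fu$ and $\varphi_{LV}$ and the fact that $\fU$ and $S_{LV}$ are vector spaces, is the intended justification of (2). Items (1) and (3) indeed reduce, as you say, to the inclusion and stability properties of the extensions $S_V$, $S_L$, $S_{LV}$ established in \cite{vRvZ}, which must be cited rather than re-derived here.
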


\begin{theorem}
\label{theorem:integrable_functions_part_of_LV_for_uniformly_complete_Riesz_space}
Let $E$ be a uniformly complete Riesz space and $\B$ be the Banach cover of principal ideals (see Example \ref{example:principal_ideals_banach_cover}).
$\fU$ is a linear subspace of $S_{LV}$ and $\fu = \varphi_{LV}$ on $\fU$. 
\end{theorem}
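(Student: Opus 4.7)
The linearity of $\fU$ is part of the general Banach cover framework (the Theorem immediately after Observation~\ref{obs:definition_B_integral}), so the real content is $\fU\subset S_{LV}$; once this is established, $\fu = \varphi_{LV}$ on $\fU$ follows at once from Theorem~\ref{theorem:comparing_S_LV_with_fU} applied with $g=f$ from both sides (and from the order-preservingness of $\fu$ in Theorem~\ref{theorem:closed_cones_in_banach_cover_imply_order_preservingness_integral}, since $E_u$ is a Banach lattice).

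Take $f\in\fU$. By definition there is a $u\in E^+\setminus\{0\}$ such that $f$ is Bochner integrable as a map $X\to E_u$; note $E_u$ is a Banach lattice because $E$ is uniformly complete. Choose simple functions $s_n : X\to E_u$ with $\int\|f-s_n\|_u\,\DD\mu\to 0$; by passing to a subsequence we may assume $\sum_n \int\|f-s_n\|_u\,\DD\mu<\infty$, and hence $\sum_n\|f(x)-s_n(x)\|_u<\infty$ for $\mu$-a.e.\ $x$. Because $|y|\le\|y\|_u\,u$ in $E$ for every $y\in E_u$, this forces $s_n(x)\to f(x)$ pointwise a.e. Put $g_1:=s_1$ and $g_n:=s_n-s_{n-1}$ for $n\ge 2$, so that $f=\sum_n g_n$ a.e.\ with $\sum_n\int\|g_n\|_u\,\DD\mu<\infty$.

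The positive simple functions $|g_n|$ satisfy $\|\varphi(|g_n|)\|_u\le\int\|g_n\|_u\,\DD\mu$, hence $\sum_n\varphi(|g_n|)$ is absolutely convergent in $E_u$. Define the a.e.\ pointwise sums $r_m(x):=\sum_{n>m}|g_n(x)|\in E_u^+$ and the sandwich functions $h_m^+:=s_m+r_m$, $h_m^-:=s_m-r_m$. The fact that $r_m$ arises as a monotone $L$-type limit of finite sums $\sum_{m<n\le N}|g_n|$ of positive simple functions whose $\varphi$-integrals converge absolutely in $E_u$ puts $r_m$ in $S_L^+$ with $\varphi_L(r_m)=\sum_{n>m}\varphi(|g_n|)$ via the $\varphi$-partition mechanism of \cite{vRvZ}. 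Thus $h_m^\pm\in S_L$, and one has $h_m^-\le f\le h_m^+$ a.e., while $\varphi_L(h_m^+)-\varphi_L(h_m^-)=2\sum_{n>m}\varphi(|g_n|)\to 0$ in $E_u$, and $\varphi(s_m)\to\fu(f)$ in $E_u$ (Bochner). Uniform completeness of $E_u$ turns this two-sided squeeze into $\inf_m\varphi_L(h_m^+)=\sup_m\varphi_L(h_m^-)=\fu(f)$ in the order of $E$, so $f\in S_{LV}$ with $\varphi_{LV}(f)=\fu(f)$.

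The main obstacle is the bookkeeping with the definitions of $S_L$ and $S_{LV}$ from \cite{vRvZ}: one must check that the telescoped series construction actually matches the $\varphi$-partition definition of $S_L$ (so that $r_m\in S_L$ with the announced integral) and that the $E_u$-norm squeeze from both sides is precisely the convergence demanded by $S_{LV}$. Everything else — the a.e.\ subtleties, the reduction to an $E_u$, and the final identification $\fu=\varphi_{LV}$ — is automatic given Theorems~\ref{theorem:closed_cones_in_banach_cover_imply_order_preservingness_integral} and~\ref{theorem:comparing_S_LV_with_fU}.
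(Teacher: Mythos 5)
Your reduction to a single principal ideal $E_u$, and your closing observation that once $f\in S_{LV}$ the identity $\fu(f)=\varphi_{LV}(f)$ follows from Theorem \ref{theorem:comparing_S_LV_with_fU} applied to $\pm f$, are both fine. The genuine gap is exactly at the point you flag but do not resolve: the claim that $r_m=\sum_{n>m}|g_n|$ lies in $S_L^+$ ``via the $\varphi$-partition mechanism''. Membership in $S_L^+$ means (as used in the proof of the Lemma in Section \ref{section:comparing_integrals}) that there is a countable measurable partition $(B_i)$ of $X$ on each member of which the function restricts to a simple function, with the partial integrals converging laterally; in particular such a function is, up to a null set, countably valued and piecewise simple. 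An a.e.\ convergent series of positive simple functions with overlapping supports is in general nothing of the sort: take $g_n=2^{-n}u\1_{A_n}$ with ``independent'' sets $A_n$ (binary-digit sets), then $\sum_{n>m}g_n$ is not simple on any set of positive measure, hence not in $S_L^+$. The limit you are taking is a monotone (vertical, $V$-type) limit of simple functions, not a lateral one, so your sandwich functions $h_m^\pm$ need not lie in $S_L$, and the squeeze then only places $f$ in some further extension of $S_{LV}$ rather than in $S_{LV}=\left(S_L\right)_V$ itself. Since these extension procedures are not known to stabilise (the paper is explicit that, e.g., $S_{VLV}$ requires extra stability hypotheses), the argument does not close.

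The paper avoids this entirely by quoting the companion paper: by \cite[Corollary 9.8]{vRvZ} a Bochner integrable $f:X\rightarrow E_u$ already lies in $(S^{E_u})_{LV}$ with $\varphi_{LV}$-integral equal to the Bochner integral, and since $E_u$ is a Riesz ideal of $E$ the inclusion $E_u\rightarrow E$ is order continuous, so \cite[Theorem 8.14]{vRvZ} transports membership and the value of the integral from $(S^{E_u})_{LV}$ to $S_{LV}$ (formed from $E$-valued simple functions). If you want a self-contained proof along your lines, you would have to reprove the substance of that Corollary 9.8, i.e.\ show how a norm-convergent series of simple functions is captured by a lateral-then-vertical construction; that is precisely the bookkeeping you deferred, and as stated your $r_m\in S_L^+$ step is false, not merely unchecked.
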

\begin{proof}
Let $u\in E^+$ and let $f: X\rightarrow E_u$ be Bochner integrable. 
We prove $f\in S_{LV}$.

For simplicity of notation, put $D=E_u$. 
Let $S^D$ be the space of simple functions $X\rightarrow D$.
By \cite[Corollary 9.8]{vRvZ} we have $f\in (S^D)_{LV}$ and $\fu(f) = \varphi_{LV}(f)$. 
Since $D$ is a Riesz ideal in $E$, the identity map $D\rightarrow E$ is order continuous. 
Then \cite[Theorem 8.14]{vRvZ} implies $f\in S_{LV}$. 
\end{proof}

Contrary to Theorem \ref{theorem:integrable_functions_part_of_LV_for_uniformly_complete_Riesz_space}, in \cite[Example 9.9(II)]{vRvZ} $\fU$ is not a linear subspace of $S_{LV}$. Then next example shows, in the context of Theorem \ref{theorem:integrable_functions_part_of_LV_for_uniformly_complete_Riesz_space}, that the inclusion may be strict. 

\begin{example}[$\fU \subsetneq S_{LV}$]
For $X=\N, \cA= \cP(\N)$ and $\mu$ the counting measure and $E=c$. 
As is mentioned in \cite[Examples 9.9(I)]{vRvZ}, the function $n\mapsto \1_{\{n\}}$ is an element of $S_{LV}$ but not Bochner integrable. 
With $\B$ the Banach cover of principal ideals, the function $n \mapsto \1_{\{n\}}$ is not $\B$-integrable 
(see \ref{obs:trivial_covers}\ref{item:E_itself_has_unit}). 
\end{example}

\section{An example: Convolution}
\label{section:convolution}

To illustrate the $\B$-integral as an extension of the Bochner integral we consider the following situation. 
(This introduction requires some knowledge of harmonic analysis on locally compact groups, the balance of this section does not.)

Let $G$ be a locally compact group. 
For $f: G\rightarrow \R$ and $x\in G$ we let $L_x f : G \rightarrow \R $ be the function $y\mapsto f(x^{-1}y)$. 

For a finite measure $\mu$ on $G$ and $f$ in $L^1(G)$ one defines their convolution product to be the element $\mu * f$ of $L^1(G)$ given for almost every $y\in G$ by 
\begin{align}
(\mu * f)(y) = \int f(x^{-1} y ) \D \mu(x) = \int (L_x f)(y) \D \mu(x). 
\end{align}
The map $x\mapsto L_x f$ of $G$ into $L^1(G)$ is continuous and bounded, hence Bochner integrable with respect to $\mu$. It is not very difficult to prove that 
\begin{align}
\mu * f = \int L_x f \D \mu(x). 
\end{align}

Similar statements are true for other spaces of functions instead of $L^1(G)$, such as $L^p(G)$, with $1<p<\infty$, and $C_0(G)$, the space of continuous functions that vanish at infinity. 

But consider the space $C(G)$ of all continuous functions on $G$. 
The integrals \\
$\int f(x^{-1}y) \D \mu(x)$ will not exist for all $f\in C(G)$, $y\in G$ and all finite measures $\mu$, but they do if $\mu$ has compact support. 
Thus, one can reasonably define $\mu * f$ for $f\in C(G)$ and compactly supported $\mu$. 
However, there is no natural norm on $C(G)$ (except, of course, if $G$ is compact), so we cannot speak of $\int L_x f \D \mu(x)$ as a Bochner integral. 
We will see that, at least for $\sigma$-compact $G$, it is a $\B$-integral where $\B$ is the Banach cover of $C(G)$ that consists of the principal (Riesz) ideals. 

 

\begin{theorem}
\label{theorem:left_translation_in_principal_ideal}
Let $G$ be a $\sigma$-compact locally compact group. 
For every $f\in C(G)$ there exists a $w\in C(G)^+$ such that 
\begin{align}
\notag &\mbox{Every } L_x f \quad (x\in G) \mbox{ lies in the principal ideal } C(G)_w; \\
& x\mapsto L_x f \mbox{ is continuous relative to } \|\cdot\|_w. 
\label{eqn:left_translation_in_principal_ideal_and_continuity}
\end{align}
\end{theorem}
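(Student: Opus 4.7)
The plan is to construct $w$ explicitly from an exhaustion of $G$ combined with Urysohn's lemma. By $\sigma$-compactness and local compactness, pick open sets $U_1\subset U_2\subset\cdots$ with each $\overline{U_n}$ compact, $\overline{U_n}\subset U_{n+1}$, and $G=\bigcup_n U_n$. Urysohn's lemma gives $u_n\in C_c(G)$ with $u_n=1$ on $\overline{U_n}$, $u_n=0$ off $U_{n+1}$ and $0\le u_n\le 1$; set $u_0:=0$. Since $\supp(u_{n-1})\subset U_n\subset\overline{U_n}$, one has $u_n\ge u_{n-1}$ everywhere, so $\chi_n:=u_n-u_{n-1}\ge 0$; these $\chi_n$ sum pointwise to $1$, with at most two nonzero values at any given $y$.

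For each $m,n\in\N$ the set $\overline{U_m}^{-1}\overline{U_n}$ is compact, hence
\[
B_{m,n}:=\sup\{|f(z)|:z\in\overline{U_m}^{-1}\overline{U_n}\}<\infty.
\]
Choose an increasing positive sequence $(A_n)_{n\ge 1}$ such that for every fixed $m$ the ratio $B_{m,n}/A_{n-1}\to 0$ as $n\to\infty$; for instance $A_n:=n\cdot\max_{m\le n+1}B_{m,n+1}$. Set $A_0:=A_1$ and define $w:=\sum_{n\ge 1}A_n\chi_n$. The sum is locally finite, so $w\in C(G)$, and $w(y)\ge A_1>0$ everywhere, making $C(G)_w$ and $\|\cdot\|_w$ well-defined.

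For $x\in\overline{U_m}$ and $y\in G$ with $k:=\min\{n\ge 1:y\in U_n\}$, one has $y\in\overline{U_k}$, so $x^{-1}y\in\overline{U_m}^{-1}\overline{U_k}$ and $|L_xf(y)|\le B_{m,k}$; since $w(y)$ is a convex combination of $A_{k-1}$ and $A_k$, $w(y)\ge A_{k-1}$, hence $|L_xf|\le\lambda_m w$ with $\lambda_m:=\sup_k B_{m,k}/A_{k-1}<\infty$, so $L_xf\in C(G)_w$. For continuity at $x_0\in U_m$, fix $\epsilon>0$ and pick $N$ so large that $2B_{m,k}/A_{k-1}<\epsilon$ for all $k>N$; then $|L_xf(y)-L_{x_0}f(y)|/w(y)<\epsilon$ already for $y\notin U_N$ and every $x\in\overline{U_m}$. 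On the compact set $\overline{U_N}$, $(x,y)\mapsto f(x^{-1}y)$ is uniformly continuous and $w$ is bounded below by some $\delta>0$, so on a sufficiently small neighborhood of $x_0$ inside $\overline{U_m}$ one has $|L_xf(y)-L_{x_0}f(y)|<\epsilon\delta$ throughout $\overline{U_N}$. Combining yields $\|L_xf-L_{x_0}f\|_w\le\epsilon$.

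The main obstacle is calibrating the growth of the weights $A_n$ against the compact-set bounds $B_{m,n}$: the $A_n$ must grow uniformly fast enough that for each $m$ the ratios $B_{m,n}/A_{n-1}$ are not only bounded in $n$ (needed for $L_xf\in C(G)_w$) but also tend to $0$ as $n\to\infty$ (needed to convert uniform continuity of $f$ on compacta into $\|\cdot\|_w$-continuity of $x\mapsto L_xf$). Once the sequence $A_n$ is chosen correctly, the remainder is routine estimation on compact sets.
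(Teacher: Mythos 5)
Your argument is correct and follows essentially the same strategy as the paper's proof: exhaust $G$ by compacta, build a continuous weight $w$ whose growth beats $\sup|f|$ over the translated compacta by an unbounded margin, and verify \eqref{eqn:left_translation_in_principal_ideal_and_continuity} by splitting $G$ into a far region (where that margin absorbs $2\sup|f|$) and a compact region (handled by uniform continuity), with Urysohn's lemma supplying continuity of the weight. The differences are only organisational — you assemble $w$ directly as the locally finite sum $\sum_n A_n\chi_n$ and calibrate the $A_n$ against the bounds $B_{m,n}$, whereas the paper first forms the index weight $[y]\bigl(1+\sup|f(K_{[y]+1})|\bigr)$ using symmetric $K_n$ with $K_nK_n\subset K_{n+1}$ and only afterwards dominates it by a continuous function — and your sample choice of $A_n$ should be adjusted to, say, $n\bigl(1+\max_{m\le n+1}B_{m,n+1}\bigr)$ so that the required positivity of the sequence (hence $w>0$, which your division by $w(y)$ uses) also holds when $f$ vanishes on the relevant sets.
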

\begin{proof}
Choose compact $K_1 \subset K_2 \subset \dots $ such that 
$K_1$ is a neighbourhood of $e$; 
$K_n = K_n^{-1}$; $K_n K_n \subset K_{n+1}$, $\bigcup_{n\in\N} K_n = G$. 

For $x\in G$, define $[x]$ to be the smallest $n$ with $x\in K_n$. 
Then $[x]=[x^{-1}]$, $[xy]\le 1 + [x]\vee [y]$ for all $x,y\in G$ by definition. 

Let $f\in C(G)$. Define $u, \v : G \rightarrow [1,\infty)$ as follows: 
\begin{align}
u(x) : = 1 + \sup |f(K_{n+1})|,\quad  \v(x) = [x] u(x) \qquad \mbox{ if } [x]=n. 
\end{align}

(1) If $x,y\in G$, $[x]\le [y]=n$, then $x,y\in K_n$, so 
\begin{align}
|f(xy) | \le \sup |f(K_{n+1})|\le u(y). 
\end{align}

(2) Hence, for all $x,y\in G$: $|f(x^{-1}y)|\le u(x) \vee u(y) \le u(x) \v(y)$, i.e., $|L_x f| \le u(x) \v$. 

\bigskip

\noindent Let $a\in G$, $\epsilon>0$. In (5), using (3) and (4), we show the existence of a neighbourhood $U$ of $e$ with 
\begin{align}
x\in a U \Longrightarrow | L_x f - L_a f | \le \epsilon \v. 
\end{align}

Choose a $p$ with $a \in K_p$. 

(3) $K_1$ contains an open set $V$ containing $e$. We make a $q\in \N$ with 
\begin{align}
x\in a V  \Longrightarrow | L_x f - L_a f | \le \epsilon \v \mbox{ on } G \setminus K_q. 
\end{align}
Let $x\in aV$. 
Then $x,a \in K_{p+1}$, so $[x],[a]\le p+1$. By (1): 
\begin{align}
| (L_x f - L_a f)(y)  | \le |f(x^{-1}y)| + |f(a^{-1}y)| \le 2 u (y) \mbox{ if } [y] \ge p+1.
\end{align}
Moreover, $\epsilon \v(y) = \epsilon [y] u(y) \ge 2u (y)$ if $[y] \ge \frac{2}{\epsilon}$. 
Take $q\in \N$ with $q\ge p+1$ and $q\ge \frac{2}{\epsilon}$. For $y\in G \setminus K_q$ we have $[y]>q$, so $|(L_x f - L_a f)(y)| \le \epsilon \v(y)$. 

(4) We show there exists an open set $W$ containing $e$ with 
\begin{align}
\label{eqn:bound_on_K_q}
x\in aW \Longrightarrow | L_x f - L_a f | \le \epsilon \v \mbox{ on } K_q. 
\end{align}
The function $(x,y) \mapsto | (L_x f - L_a f)(y) |$ on $G\times G$ is continuous and $K_q$ is compact. 
Hence by \cite[Theorem 8.15]{vRvZ} the function $G\mapsto [0,\infty)$
\begin{align}
x\mapsto \sup_{y\in K_q} | (L_x f - L_a f)(y) |
\end{align}
is continuous. Its value at $a$ is $0$, so there exists an open set $W$ containing $e$ with 
\begin{align}
\sup_{y\in K_q} | (L_x f - L_a f)(y)| \le \epsilon \qquad (x\in a W). 
\end{align}
As $v(y) \ge 1$ for all $y$ we obtain \eqref{eqn:bound_on_K_q}. 

(5) With $U = V\cap W$ we have
\begin{align}
x\in aU \ \Longrightarrow \ |L_x f - L_a f|\le \epsilon \v \mbox{ on } G. 
\end{align}
Therefore, to prove the theorem it is sufficient to show there exists a continuous function $w \ge \v$: 

(6) Set $\alpha_n = n ( 1+ \sup | f(K_{n+1})| )$ for all $n$; then 
\begin{align}
[x]=n \Longrightarrow \v(x)= \alpha_n.
\end{align}
Put $K_0 = \emptyset$. For all $n\in\N$ we have $V K_{n-1} \subset K_1 K_{n-1} \subset K_n$, so $K_{n-1}$ is a subset of $K_n^\circ$, the interior of $K_n$. 
By Urysohn \cite[Theorem VII.4.1 and Theorem XI.1.2]{Du66} for all $n$ 
there is a continuous $g_n : G \rightarrow [0,1]$ with 
\begin{align}
g_n = 0 \mbox{ on } K_{n-1}, \ g_n = 1 \mbox{ on } G\setminus K_n^\circ \supset G \setminus K_n. 
\end{align}
Let $b\in G$. There is a $l$ with $b\in K_l$. 
$bV$ is an open set containing $b$. As $bV \subset K_{l+1}$ and $g_n=0$ on $K_{n-1}$ we have: $g_n=0$ on $bV$ as soon as $n\ge l+2$. 

Hence, $w:= \sumn \alpha_{n+1} g_n$ is a continuous function $G \rightarrow [0,\infty)$. 
For every $x\in G$ there is an $n$ with $[x]=n$; then $x\notin K_{n-1}$, $g_{n-1}(x)=1$ and $w(x) \ge \alpha_n = \v(x)$. 
\end{proof}

\begin{theorem}
\label{theorem:convolution}
Let $G$ be a $\sigma$-compact locally compact group. 
Let $\mu$ be a finite measure on the Borel $\sigma$-algebra of $G$ with a compact support. 
Let $\B$ be the Banach cover of $C(G)$ consisting of the principal ideals as in Example \ref{example:principal_ideals_banach_cover}. 
Then for every $f\in C(G)$ the function $x\mapsto L_x f$ is $\B$-integrable and its integral is the 
``convolution product'' $\mu * f$: 
\begin{align}
(\mu * f) (y)  = \int f(x^{-1}y) \D \mu(x) \qquad (y\in G). 
\end{align}
\end{theorem}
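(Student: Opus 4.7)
The plan is to reduce the statement to the previous theorem, use it to verify Bochner integrability into a single principal ideal, and then identify the integral pointwise by testing against evaluation functionals.

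Fix $f\in C(G)$ and apply Theorem \ref{theorem:left_translation_in_principal_ideal} to obtain $w\in C(G)^+$ such that every $L_x f$ lies in $C(G)_w$ and the map $F: G\to C(G)_w$ defined by $F(x)=L_x f$ is continuous with respect to $\|\cdot\|_w$. I would then verify that $F$ is Bochner integrable as a map into the Banach space $C(G)_w$ by checking the three conditions of \ref{obs:facts_on_Bochner}\ref{item:Bochner_integrable_when_measurable_and_norm_integrable}. Borel measurability is immediate from continuity. For separability of the range, write $G=\bigcup_{n} K_n$ with each $K_n$ compact (using $\sigma$-compactness); then each $F(K_n)$ is a compact subset of the metric space $C(G)_w$, hence separable, and the countable union $F(G)$ is separable. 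For norm-integrability, let $K=\supp \mu$; then $x\mapsto \|L_x f\|_w$ is continuous and therefore bounded on the compact set $K$, so
\begin{align}
\int \|L_x f\|_w \D\mu(x) \le \Bigl(\sup_{x\in K}\|L_x f\|_w\Bigr)\mu(K)<\infty.
\end{align}
This shows $F$ is Bochner integrable as a map $G\to C(G)_w$, hence $\B$-integrable in the sense of \ref{obs:definition_B_integral}, with $\fu(F)\in C(G)_w\subset C(G)$.

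To identify $\fu(F)$ with $\mu*f$, I would exploit the evaluation functionals. For each $y\in G$, the linear map $\delta_y: C(G)_w\to \R$ given by $\delta_y(g)=g(y)$ is continuous, since $|g(y)|\le \|g\|_w\, w(y)$ for all $g\in C(G)_w$. Applying \ref{obs:facts_on_Bochner}\ref{item:Bochner_and_closed_map} with $T=\delta_y$ gives
\begin{align}
\fu(F)(y) = \delta_y\bigl(\fu(F)\bigr) = \int \delta_y(L_x f)\D\mu(x) = \int f(x^{-1}y)\D\mu(x) = (\mu*f)(y),
\end{align}
for every $y\in G$. Hence $\fu(F)=\mu*f$ as functions in $C(G)$, which is the desired conclusion.

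The analytic substance of the argument is packaged entirely in the preceding Theorem \ref{theorem:left_translation_in_principal_ideal}, which supplies both the principal ideal and the norm-continuity of $x\mapsto L_x f$; the present proof is then essentially a formal verification. The one step that requires a little care is separability of the range: it is tempting to rely on $G$ itself being separable, but $G$ need not be, and the correct input is $\sigma$-compactness combined with the fact that continuous images of compact sets in a metric space are separable.
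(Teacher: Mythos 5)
Your proof is correct and follows essentially the same route as the paper: reduce to Theorem \ref{theorem:left_translation_in_principal_ideal}, check Bochner integrability into $C(G)_w$ via \ref{obs:facts_on_Bochner}\ref{item:Bochner_integrable_when_measurable_and_norm_integrable}, and identify the integral through \ref{obs:facts_on_Bochner}\ref{item:Bochner_and_closed_map}. Your version merely makes explicit two points the paper leaves implicit, namely the separability of the range via $\sigma$-compactness and the choice of the evaluation functionals $\delta_y$ (continuous since $|g(y)|\le \|g\|_w\, w(y)$) as the continuous linear maps used to pin down the integral pointwise.
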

\begin{proof}
By Theorem \ref{theorem:left_translation_in_principal_ideal} there exists a $w\in C(G)^+$ such that 
\eqref{eqn:left_translation_in_principal_ideal_and_continuity} holds. 
This implies that the map $x \mapsto L_x f$ is Borel measurable and $\{L_x f : x\in G\}$ is separable in $C(G)_w$. As $x\mapsto \|L_x f\|_w$ is continuous and thus bounded on the support of $\mu$, the map $x \mapsto L_x f$ is $\B$-integrable (see \ref{obs:facts_on_Bochner}\ref{item:Bochner_integrable_when_measurable_and_norm_integrable}).  
That the integral is equal to $\mu * f$ follows by \ref{obs:facts_on_Bochner}\ref{item:Bochner_and_closed_map}. 
\end{proof}

\begin{remark}
Theorem \ref{theorem:convolution} compares to \cite[Example 8.17]{vRvZ} in the sense that in both situations the convolution is equal to an integral of the translation. 
Though the situation is slightly different in the sense that in Theorem \ref{theorem:convolution} we consider $\sigma$-compact locally compact groups, while in \cite[Example 8.17]{vRvZ} we considered metric locally compact groups (the fact that in \cite[Example 8.17]{vRvZ} $L_x f (t) = f(t x^{-1})$ is reminiscent for its statement). 
\end{remark}


\section*{Acknowledgements}

W.B. van Zuijlen is supported by the ERC Advanced Grant VARIS-267356 of Frank den Hollander.

\begin{appendices}

\section{Appendix: Absolutely dominating norms}
\label{section:appendix_useful}

\textbf{In this section, $C>0$ and $D$ is an ordered Banach space with closed generating cone and with a $C$-absolutely dominating norm $\|\cdot\|$.} 

As is mentioned in \ref{obs:facts_about_absolutely_dominating}, we show that $C\ge 1$ (\ref{obs:absolutely_dominating_constant_larger_than_one}) and that for every $\epsilon>0$ there exists an equivalent $(1+\epsilon)$-absolutely norm (Theorem \ref{theorem:also_almost_1_abs_dom_norm}). Furthermore, we discuss (in \ref{obs:cN_equivalent_to_all_other_norms_with_equality} -- \ref{theorem:equivalence_normal_and_norm})  whenever there exists an equivalent norm $\|\cdot\|_1$ for which 
\begin{align}
\|x\|_1 =  \inf \{ \|a\|_1 : a\in D^+, - a\le x \le a\} \qquad (x\in D). 
\end{align}
This is done by means of the norm $\cN$ introduced in \ref{obs:norm_to_be_equal_useful}. 
Example \ref{example:absolutely_dominating_but_not_monotone_norm} illustrates that the existence of such equivalent norm may fail. 

\begin{obs}[$C$ has to be $\ge 1$]
\label{obs:absolutely_dominating_constant_larger_than_one}
Suppose that $C<1$. Choose $C'>0$ such that $C<C' <1$. 
For all $a\in D^+$ with $a\ne 0$ there exists a $b\in D^+$ with $a\le b$ and $\|b\| \le C' \|a\|$.
Let $a\in D^+$ with $\|a\|=1$. 
Iteratively one obtains a sequence $a\le a_1 \le a_2 \le \cdots$ with $\|a_1\|\le C'$ and $\|a_{n+1}\| \le C' \|a_n\|$ for all $n$. 
Then $\sum_{n\in\N} \|a_n\| <\infty$ and thus $a_n \xrightarrow{u} 0$ by Theorem \ref{theorem:equivalences_useful}, which contradicts $0<a \le a_n$. 
\end{obs}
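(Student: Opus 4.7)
The plan is to argue by contradiction and exploit property \ref{item:condition_ru_convergence_summable_seq} of Theorem \ref{theorem:equivalences_useful}, which says that in an ordered Banach space with closed generating cone, any absolutely summable sequence converges uniformly to $0$. So I would assume $C<1$, fix an auxiliary $C'$ with $C<C'<1$, and then build a chain of positive elements whose norms decay geometrically while staying bounded below by a fixed positive element.

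First the construction. Pick any $a\in D^+$ with $\|a\|=1$. Since the norm is $C$-absolutely dominating and $C'>C$, I can find $a_1\in D^+$ with $-a_1\le a\le a_1$ and $\|a_1\|\le C'\|a\|=C'$; in particular $a\le a_1$. Given $a_n\in D^+$ with $a\le a_n$, applying the same bound to $a_n$ itself produces $a_{n+1}\in D^+$ with $a_n\le a_{n+1}$ and $\|a_{n+1}\|\le C'\|a_n\|$. By induction $\|a_n\|\le (C')^n$, and since $C'<1$ the series $\sumn \|a_n\|$ converges.

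Now I invoke Theorem \ref{theorem:equivalences_useful}: closedness of $D^+$ together with summability of $\|a_n\|$ yields $a_n\unifrightarrow 0$, so there exist $c\in D^+$ and $\epsilon_n\downarrow 0$ with $-\epsilon_n c\le a_n\le\epsilon_n c$ for every $n$. But the construction guarantees $a\le a_n$, so $a\le\epsilon_n c$ for all $n$. Since $D$ is Archimedean by Theorem \ref{theorem:ordering_determined_by_dual_cone}, this forces $a\le 0$, contradicting $\|a\|=1$.

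The one step to verify carefully is that the iteration simultaneously preserves the lower bound $a\le a_n$ and the geometric decay $\|a_n\|\le (C')^n$; once that chain exists, the collision with property \ref{item:condition_ru_convergence_summable_seq} of Theorem \ref{theorem:equivalences_useful} is immediate and no extra machinery is needed.
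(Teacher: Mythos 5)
Your proof is correct and is essentially the paper's own argument: the same iterative construction of a chain $a\le a_1\le a_2\le\cdots$ with geometrically decaying norms via the $C$-absolutely dominating property, followed by the same appeal to Theorem \ref{theorem:equivalences_useful} to get $a_n\unifrightarrow 0$ and the contradiction with $0<a\le a_n$. You merely spell out the final Archimedean step (via Theorem \ref{theorem:ordering_determined_by_dual_cone}) that the paper leaves implicit.
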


\begin{obs}
\label{obs:norm_to_be_equal_useful}
Define $\cN: D \rightarrow [0,\infty)$ by 
\begin{align}
\cN(x) = \inf \{ \|a\|: a\in D^+, - a\le x \le a\}.
\end{align}
$\cN$ is a seminorm, and actually a norm because (see Theorem \ref{theorem:equivalences_useful})
\begin{align}
\cN(x) = 0 \iff \mbox{ there is an } a^* \in D^+ \mbox{ with } - \tfrac1n a^* \le x \le \tfrac1n a^* \quad (n\in\N). 
\end{align}
Because $\|\cdot\|$ is $C$-absolutely dominating one has $\cN \le C \|\cdot\|$. 
\end{obs}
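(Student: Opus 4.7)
The plan is to establish three things: (a) $\cN$ satisfies the seminorm axioms, (b) the stated equivalence characterising $\cN(x)=0$, from which (together with the Archimedean property) it follows that $\cN$ is a norm, and (c) the inequality $\cN\le C\|\cdot\|$. Items (a) and (c) are routine; the substance is in (b).

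For (a), homogeneity $\cN(\lambda x)=|\lambda|\cN(x)$ is immediate by multiplying $-a\le x\le a$ by $|\lambda|$, and the case $\lambda=0$ follows by taking $a=0$. The triangle inequality is obtained from the observation that if $-a\le x\le a$ and $-b\le y\le b$ with $a,b\in D^+$, then $-(a+b)\le x+y\le a+b$ and $\|a+b\|\le\|a\|+\|b\|$; taking the infimum over admissible $a$ and $b$ gives $\cN(x+y)\le\cN(x)+\cN(y)$. For (c), $C$-absolute dominance directly states $C\|x\|\ge \inf\{\|a\|: a\in D^+, -a\le x\le a\}=\cN(x)$, which is nothing more than an unpacking of the definitions.

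The forward implication of (b) is the main obstacle, and is where the hypotheses on $D$ really enter. Given $\cN(x)=0$, for each $n$ choose $a_n\in D^+$ with $-a_n\le x\le a_n$ and $\|a_n\|\le 2^{-n}$, so that $\sum_n\|a_n\|<\infty$. Because $D^+$ is closed, Theorem~\ref{theorem:equivalences_useful} (property \ref{item:condition_bounded}) provides an $a^*\in D^+$ and positive scalars $\epsilon_n\to 0$ with $a_n\le \epsilon_n a^*$ for every $n$. Then $-\epsilon_n a^*\le -a_n\le x\le a_n\le \epsilon_n a^*$, and for any prescribed $m\in\N$ one picks $n$ with $\epsilon_n\le \tfrac{1}{m}$ to conclude $-\tfrac{1}{m}a^*\le x\le \tfrac{1}{m}a^*$. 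The reverse implication is easier: if such $a^*$ exists then $\cN(x)\le \tfrac{1}{m}\|a^*\|$ for every $m$, forcing $\cN(x)=0$.

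Finally, to see that $\cN$ is a norm rather than merely a seminorm, suppose $\cN(x)=0$ and pick $a^*$ as in the equivalence. The inequalities $-\tfrac{1}{m}a^*\le x\le \tfrac{1}{m}a^*$ rearrange to $a^*\ge m x$ and $a^*\ge -mx$ for every $m$. Since $D^+$ is closed, Theorem~\ref{theorem:ordering_determined_by_dual_cone} tells us $D$ is Archimedean, hence both $x\le 0$ and $-x\le 0$, whence $x=0$. The only genuinely non-trivial step in the whole argument is the invocation of Theorem~\ref{theorem:equivalences_useful} to upgrade a sequence of norm-summable bounds into a single dominating element $a^*$.
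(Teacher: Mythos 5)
Your proposal is correct and follows essentially the same route the paper intends: the seminorm axioms and the bound $\cN\le C\|\cdot\|$ are read off from the definitions, the forward implication of the equivalence is obtained from condition \ref{item:condition_bounded} of Theorem \ref{theorem:equivalences_useful} (valid since $D^+$ is closed) applied to a sequence $a_n\in D^+$ with $-a_n\le x\le a_n$ and $\sumn\|a_n\|<\infty$, and definiteness then follows from Archimedeanness of $D$ (Theorem \ref{theorem:ordering_determined_by_dual_cone}). Your write-up just makes explicit the steps the paper leaves as an inline reference.
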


\begin{theorem}
\label{theorem:also_almost_1_abs_dom_norm}
For all $\epsilon>0$ there exists an equivalent norm $\rho$ on $D$, for which 
\begin{align}
(1+\epsilon)^2 \rho(x) \ge \inf \{ \rho(a) : a\in D^+, - a \le x \le a\} \qquad (x\in D). 
\end{align}
\end{theorem}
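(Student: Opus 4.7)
The plan is to perturb the norm $\cN$ from Observation \ref{obs:norm_to_be_equal_useful} by a small multiple of $\|\cdot\|$. Concretely, given $\epsilon > 0$, set $\delta := 2\epsilon + \epsilon^2$ so that $1+\delta = (1+\epsilon)^2$, and define
\begin{align}
\rho(x) := \cN(x) + \delta \|x\| \qquad (x \in D).
\end{align}

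First I would check that $\rho$ is a norm equivalent to $\|\cdot\|$. It is a norm as the sum of the two norms $\cN$ and $\delta\|\cdot\|$. Using the estimate $\cN \le C\|\cdot\|$ recorded in \ref{obs:norm_to_be_equal_useful},
\begin{align}
\delta\|x\| \le \rho(x) \le (C+\delta)\|x\|,
\end{align}
so $\rho$ and $\|\cdot\|$ are equivalent (in particular $(D,\rho)$ is a Banach space).

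The key inequality rests on the observation that $\cN(a) \le \|a\|$ for every $a \in D^+$: in the infimum $\cN(a) = \inf\{\|b\| : b\in D^+, -b \le a \le b\}$ one may take $b = a$, since $a \ge 0$ forces $-a \le a \le a$. Hence for every $a \in D^+$ with $-a \le x \le a$,
\begin{align}
\rho(a) = \cN(a) + \delta\|a\| \le (1+\delta)\|a\|.
\end{align}
Taking the infimum over such $a$ and using $\cN(x) \le \rho(x)$,
\begin{align}
\inf\{\rho(a) : a\in D^+,\, -a \le x \le a\} \le (1+\delta)\cN(x) \le (1+\delta)\rho(x) = (1+\epsilon)^2 \rho(x),
\end{align}
which is the desired inequality.

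I don't foresee a real obstacle: the argument is essentially two lines once the right $\rho$ is written down. The only point that could be mis-stated is the choice of $\delta$ — making $\delta$ too small loses equivalence with $\|\cdot\|$ (since $\cN$ alone need not dominate $\|\cdot\|$), while making it too large inflates the constant in the absolutization beyond $(1+\epsilon)^2$; the value $\delta = 2\epsilon + \epsilon^2$ is precisely balanced to give the stated factor.
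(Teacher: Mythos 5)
Your proof is correct and follows essentially the same route as the paper: both perturb $\|\cdot\|$ by the auxiliary norm $\cN$ of \ref{obs:norm_to_be_equal_useful} and use $\cN(a)\le\|a\|$ for $a\in D^+$ together with $\cN\le C\|\cdot\|$ for equivalence. The only difference is bookkeeping: the paper weights the perturbation by $\epsilon$ and picks a near-optimal $a$ with $\|a\|\le(1+\epsilon)\cN(x)$ (hence two factors of $1+\epsilon$), while you take the weight $\delta=2\epsilon+\epsilon^2$ and pass to the infimum directly, which even avoids the separate treatment of $x\ne 0$.
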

\begin{proof}
Define $\rho := \epsilon \cN + \|\cdot\|$.
$\rho$ is a norm which is equivalent to $\|\cdot\|$, since $\cN \le C\|\cdot\|$. 

Let $x\in D$, $x\ne 0$. Choose $a\in D^+$ with $-a \le x\le a$ such that $\|a\| \le (1+\epsilon) \cN(x)$. Note that by definition of $\cN$ we have $\cN(a)\le \|a\|$. Whence 
\begin{align}
\frac{\rho(a)}{\rho(x)}
= \frac{  \cN(a) + \epsilon\|a\|}{  \cN(x) + \epsilon \|x\|}
\le \frac{  \cN(a) + \epsilon\|a\|}{  \cN(x) }
\le \frac{  \|a\| + \epsilon\|a\|}{  \cN(x) }
\le (1+\epsilon) \frac{\|a\|}{\cN(x)} \le (1+\epsilon)^2.
\end{align}
\
\end{proof}

\begin{obs}
\label{obs:cN_equivalent_to_all_other_norms_with_equality}
Suppose $\|\cdot\|_1$ is a norm equivalent  to $\|\cdot\|$ for which there exists a $C>0$ such that 
\begin{align}
C\|x\|_1 =  \inf \{ \|a\|_1 : a\in D^+, - a\le x \le a\} \qquad (x\in D). 
\end{align}
Then it is straightforward to show that $\cN$ is equivalent to $\|\cdot\|$. 
\end{obs}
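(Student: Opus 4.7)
My plan is to sandwich $\cN$ between two positive multiples of $\|\cdot\|$. The upper bound $\cN \le C\|\cdot\|$ is already recorded in \ref{obs:norm_to_be_equal_useful}, so the only real task is to produce a positive lower bound of the form $\cN(x) \ge c\|x\|$ for some $c>0$.

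I would first fix constants $\alpha,\beta>0$ witnessing the equivalence of norms, $\alpha\|\cdot\| \le \|\cdot\|_1 \le \beta\|\cdot\|$, and introduce the auxiliary quantity
\begin{align}
\cN_1(x) := \inf\{\|a\|_1 : a\in D^+,\ -a\le x\le a\},
\end{align}
which by hypothesis equals $C\|x\|_1$ identically on $D$.

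The key step is to compare $\cN$ and $\cN_1$ directly from their definitions. The set of admissible majorants $a\in D^+$ with $-a\le x\le a$ is the same in both infima, because it depends only on the order of $D$, not on the norm chosen. On this set $\|a\|_1 \le \beta \|a\|$, so taking the infimum yields $\cN_1(x) \le \beta\,\cN(x)$.

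Chaining the inequalities gives $C\alpha\|x\| \le C\|x\|_1 = \cN_1(x) \le \beta\,\cN(x)$, hence $\cN(x) \ge (C\alpha/\beta)\|x\|$; combined with $\cN \le C\|\cdot\|$ this establishes the equivalence. I anticipate no real obstacle: the author's ``straightforward'' is accurate because the whole argument reduces to observing that replacing $\|\cdot\|$ by an equivalent norm inside the defining infimum distorts the value only by the equivalence constants.
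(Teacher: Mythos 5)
Your argument is correct and is precisely the ``straightforward'' argument the paper leaves unwritten: the admissible set $\{a\in D^+:-a\le x\le a\}$ is norm-independent, so the two infima $\cN$ and $\cN_1$ differ only by the equivalence constants, giving $\cN(x)\ge (C\alpha/\beta)\|x\|$, while the upper bound $\cN\le C\|\cdot\|$ comes from the standing $C$-absolute-domination assumption. Only a cosmetic point: the $C$ in the hypothesis on $\|\cdot\|_1$ need not be the section's fixed $C$ appearing in $\cN\le C\|\cdot\|$, but since the argument only needs some positive constants on each side, this does not affect anything.
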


\begin{obs}
$\cN$ is equivalent to $\|\cdot\|$ if and only if there exists a $c>0$ such that  $\cN \ge c\|\cdot\|$. 
The latter is true if and only if 
\begin{align}
-\tfrac1n a^* \le x_n \le \tfrac1n a^* \quad (n\in\N) \qquad \Longrightarrow \qquad \limn \|x_n\| =0. 
\end{align}
\end{obs}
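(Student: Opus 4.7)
The first equivalence is essentially free: by Observation \ref{obs:norm_to_be_equal_useful} we already have $\cN \le C\|\cdot\|$. Since two norms are equivalent precisely when each dominates the other by a constant, $\cN$ is equivalent to $\|\cdot\|$ if and only if the reverse inequality $\cN\ge c\|\cdot\|$ holds for some $c>0$.

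For the second equivalence, the forward implication is elementary. Suppose $\cN\ge c\|\cdot\|$ and $-\tfrac{1}{n}a^* \le x_n \le \tfrac{1}{n}a^*$ for all $n$. By the very definition of $\cN$ we have $\cN(x_n) \le \tfrac{1}{n}\|a^*\|$, so $c\|x_n\|\le \tfrac{1}{n}\|a^*\|\to 0$, whence $\|x_n\|\to 0$.

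The reverse implication is the main point and I would prove it by contraposition. Assume no $c>0$ satisfies $\cN \ge c\|\cdot\|$. Then for each $n$ there is a $y_n\in D$ with $\|y_n\|=1$ and $\cN(y_n) < n^{-3}$; choose $a_n\in D^+$ with $-a_n\le y_n\le a_n$ and $\|a_n\|<2n^{-3}$. Since $\sumn \|a_n\|<\infty$, Theorem \ref{theorem:equivalences_useful}\ref{item:condition_bounded} (which applies because $D^+$ is closed) gives an $a^*\in D^+$ together with $\epsilon_n\downarrow 0$ such that $a_n\le \epsilon_n a^*$ for all $n$. Hence $-\epsilon_n a^* \le y_n \le \epsilon_n a^*$. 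Passing to a subsequence $(y_{n_k})_k$ with $\epsilon_{n_k}\le \tfrac{1}{k}$ yields $-\tfrac{1}{k}a^* \le y_{n_k} \le \tfrac{1}{k}a^*$, so the hypothesis of the displayed implication would force $\|y_{n_k}\|\to 0$, contradicting $\|y_{n_k}\|=1$.

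The only genuinely non-trivial step is the contrapositive, and the key input there is exactly condition \ref{item:condition_bounded} of Theorem \ref{theorem:equivalences_useful}: it is what lets us trade an \emph{absolutely} summable sequence of positive elements for a \emph{single} dominating $a^*$ with vanishing scalars. Once this ingredient is in hand, the rest is just a subsequence extraction.
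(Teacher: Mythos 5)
Your proof is correct. The paper states this observation without proof, and your argument supplies exactly the intended reasoning: the first equivalence and the forward implication are immediate from $\cN \le C\|\cdot\|$ and the definition of $\cN$, while your contrapositive step --- trading the absolutely summable majorants $a_n$ for a single $a^*$ with vanishing scalars via condition \ref{item:condition_bounded} of Theorem \ref{theorem:equivalences_useful}, which is available here because $D^+$ is closed --- is precisely the mechanism the paper uses elsewhere for such statements, so no gap remains.
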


\begin{theorem}
\begin{align}
\cN(x) = \inf \{ \cN(a): a\in D^+, -a \le x \le a\}. 
\end{align}
\end{theorem}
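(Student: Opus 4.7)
The plan is to establish the identity by proving the two inequalities separately. Write $M(x) := \inf\{\cN(a) : a\in D^+,\ -a\le x\le a\}$ for brevity.

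First I would prove $M(x) \le \cN(x)$. The key observation is that for every $a\in D^+$, the element $a$ itself witnesses $-a \le a \le a$, so $\cN(a) \le \|a\|$. Hence for any $a\in D^+$ with $-a\le x\le a$ we have $\cN(a)\le \|a\|$, and taking the infimum over all such $a$ gives $M(x) \le \cN(x)$ directly from the definition of $\cN(x)$.

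For the reverse inequality $\cN(x) \le M(x)$, I would fix $\epsilon>0$ and a candidate $a\in D^+$ with $-a\le x\le a$. By definition of $\cN(a)$ there exists $b\in D^+$ with $-b\le a \le b$ and $\|b\| \le \cN(a)+\epsilon$. Since $a\in D^+$ we have $0\le a\le b$, and combining with $-a\le x\le a$ yields $-b \le -a \le x \le a \le b$. Thus $b$ is a competitor in the infimum defining $\cN(x)$, so $\cN(x)\le \|b\|\le \cN(a)+\epsilon$. Letting $\epsilon\downarrow 0$ gives $\cN(x)\le \cN(a)$, and taking the infimum over $a$ yields $\cN(x)\le M(x)$.

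There is no serious obstacle here: the argument is a two-step chain of inequalities using only the defining property of $\cN$ and the fact that the dominating element $b$ of $a$ also dominates $x$ because $a\ge 0$. The only subtlety worth naming is that the second step uses $a\ge 0$ in an essential way (to conclude $-b\le -a\le x$), which is why the competitors in the infimum are restricted to $D^+$.
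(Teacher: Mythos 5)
Your argument is correct and is essentially the paper's own proof: one direction from $\cN(a)\le\|a\|$ for $a\in D^+$ (the paper phrases it with an $\epsilon$-optimal $a$ for $\cN(x)$), the other by passing from an (almost) optimal dominator $b$ of $a$ to a dominator of $x$, which in the paper appears as $\cN(x)\le\inf\{\|b\|:b\ge a\}=\cN(a)$. One small remark: in your second step the chain $-b\le -a\le x\le a\le b$ only uses $a\le b$, which is already part of the condition ``$-b\le a\le b$'' defining the competitors for $\cN(a)$; positivity of $a$ is what is genuinely needed in the \emph{first} step, to get $\cN(a)\le\|a\|$ from $-a\le a\le a$.
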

\begin{proof}
($\le$) Take $a\in D^+, -a \le x \le a$; we prove $\cN(x) \le \cN(a)$.
For all $b\ge a, -b \le x \le b$, whence $\cN(x) \le \|b\|$. 
Thus $\cN(x) \le \inf \{\|b\|: a\le b\} = \cN(a)$, the latter by definition of $\cN(a)$. \\
($\ge$) Take $\epsilon>0$. 
Choose $a\in D^+, -a \le x\le a$, $\|a\|\le \cN(x) + \epsilon$. 
Then $\cN(a) \le \|a\| \le \cN(x) +\epsilon$. 
\end{proof}

\begin{theorem}\cite[Theorem 2.38]{AlTo07}
For an ordered normed vector space $E$ the following are equivalent.  
\begin{enumerate}
\item The cone $E^+$ is normal. 
\item The normed space $E$ admits an equivalent monotone norm. 
\item There is a $c>0$ such that $0\le x\le y$ implies $\|x\|\le c\|y\|$.
\end{enumerate}
\end{theorem}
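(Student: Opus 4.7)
The plan is to prove the implications $(2)\Rightarrow(3)$, $(1)\Rightarrow(3)$, and the paired $(3)\Rightarrow(1)$ and $(3)\Rightarrow(2)$, which together yield the full equivalence. The implication $(2)\Rightarrow(3)$ is a one-liner: if $|||\cdot|||$ is a monotone norm with $\alpha\|\cdot\|\le|||\cdot|||\le\beta\|\cdot\|$, then $0\le x\le y$ gives
\[
\|x\|\le\alpha^{-1}|||x|||\le\alpha^{-1}|||y|||\le\alpha^{-1}\beta\|y\|,
\]
i.e.\ $(3)$ with $c=\alpha^{-1}\beta$.

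The heart of the matter is a single construction. Writing $B_1$ for the closed unit ball of $(E,\|\cdot\|)$, form its order-convex hull
\[
B := [B_1] = \{\,x\in E : \exists\,u,v\in B_1 \text{ with } u\le x\le v\,\}.
\]
By construction $B$ is \emph{order-convex} (any element sandwiched between two members of $B$ lies in $B$, by propagating the witnesses down to $B_1$); normality of $E^+$ is exactly the assertion that $0$ admits a neighbourhood base of such sets. Trivially $B_1\subseteq B$, and convexity and balancedness of $B_1$ descend to $B$ by operating componentwise on the witnessing pair $(u,v)$. Hypothesis $(3)$ enters only in the boundedness estimate: if $u\le x\le v$ with $\|u\|,\|v\|\le 1$, then $0\le x-u\le v-u$, so $(3)$ gives $\|x-u\|\le c\|v-u\|\le 2c$ and thus $B\subseteq B_{1+2c}$. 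Hence $B$ is a bounded, convex, balanced, order-convex neighbourhood of $0$. This at once delivers $(1)$: the dilates $rB$ ($r>0$) form a neighbourhood base at $0$ consisting of order-convex sets. It also delivers $(2)$: the Minkowski functional $|||x|||:=\inf\{r>0:x\in rB\}$ satisfies $(1+2c)^{-1}\|x\|\le|||x|||\le\|x\|$, and it is monotone, because $0\le x\le y$ with $y/r\in B$ (say $u\le y/r\le v$ in $B_1$) forces $0\le x/r\le v$, and the pair $(0,v)\in B_1\times B_1$ then witnesses $x/r\in B$, so $|||x|||\le r$.

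For $(1)\Rightarrow(3)$, normality furnishes an order-convex neighbourhood $V$ of $0$ with $V\subseteq B_1$; being a neighbourhood, $V$ in turn contains some $B_\delta$. For $0\le x\le y$ with $y\ne 0$, both $0$ and $\delta y/\|y\|$ lie in $B_\delta\subseteq V$, so order-convexity of $V$ forces $\delta x/\|y\|\in V\subseteq B_1$, yielding $\|x\|\le\delta^{-1}\|y\|$. I anticipate no genuine obstacle: the only step with any content is the boundedness $B\subseteq B_{1+2c}$ in the $(3)\Rightarrow\cdot$ direction, and that is a single direct application of hypothesis $(3)$; the rest is formal manipulation of the order-convex hull.
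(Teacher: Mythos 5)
Your proposal is correct, and every step (order-convex hull $B$ of the unit ball, its boundedness via (c), the Minkowski functional of $B$ as the equivalent monotone norm, and the extraction of $c=\delta^{-1}$ from an order-convex neighbourhood inside the unit ball) goes through as written. Note that the paper itself gives no proof of this statement --- it is quoted with a citation to Aliprantis--Tourky \cite[Theorem 2.38]{AlTo07} --- and your argument is essentially the standard one used there, so it matches the intended route.
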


\begin{obs}
Suppose $D$ is an ordered Banach space with closed generating cone and suppose there exists a $c>0$ such that 
\begin{align}
\|x\| \le c \inf \{ \|a\|: a\in D^+, -a \le x \le a\} = c \cN(x). 
\end{align}
Then for $x\in D^+$, $a\in D^+$ with $x\le a$ one has $\|x\|\le c\|a\|$. 
\end{obs}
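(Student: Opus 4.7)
The plan is to argue directly from the definition of $\cN$ together with the hypothesis. Given $x, a \in D^+$ with $x \le a$, I need to produce the inequality $\|x\| \le c\|a\|$, and the natural route is to show that $a$ itself is a legitimate candidate in the infimum defining $\cN(x)$.

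First I would verify the two-sided bound $-a \le x \le a$. The right-hand side is the hypothesis $x \le a$. For the left-hand side, since $a \in D^+$ means $a \ge 0$, multiplying by $-1$ gives $-a \le 0$, and combining with $x \ge 0$ (because $x \in D^+$) yields $-a \le 0 \le x$. So indeed $-a \le x \le a$ with $a \in D^+$.

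Consequently $a$ belongs to the set over which the infimum in $\cN(x) = \inf\{\|b\|: b\in D^+, -b \le x \le b\}$ is taken, so $\cN(x) \le \|a\|$. Invoking the standing hypothesis $\|x\| \le c\,\cN(x)$ then gives $\|x\| \le c\,\cN(x) \le c\|a\|$, which is exactly the claim.

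There is no real obstacle here: the statement is essentially tautological once one notices that an upper bound $a$ for a positive element $x$ is automatically an \emph{absolute} bound, hence a candidate in the infimum defining $\cN(x)$. The content of the observation is thus that the ``absolutely dominating'' estimate $\|\cdot\| \le c\,\cN$ immediately upgrades to a normality estimate for the cone, so no separate argument (or additional use of closedness, generation, or completeness) is required beyond the definition of $\cN$.
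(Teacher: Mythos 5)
Your argument is correct and is exactly the intended one: since $x\ge 0$ and $x\le a$ force $-a\le 0\le x\le a$, the element $a$ is a candidate in the infimum defining $\cN(x)$, so $\|x\|\le c\,\cN(x)\le c\|a\|$. The paper leaves this observation without proof precisely because this one-line reasoning suffices; no use of closedness, generation, or completeness is needed.
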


\begin{obs}
Suppose $D$ is an ordered Banach space with closed generating cone  and $\|\cdot\|$ is monotone. 
Let $x\in D$ and $a\in D^+$ be such that 
\begin{align}
-a \le x \le a. 
\end{align}
Then $0 \le x+a \le 2a$ and whence 
$
\|x\| \le \|x+a\| + \|a\| \le 3\|a\|. 
$
Thus 
\begin{align}
\|x\| \le 3 \inf \{ \|a\|: a\in D^+, -a \le x \le a\} = 3 \cN(x).
\end{align}
\end{obs}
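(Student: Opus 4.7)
The plan is to carry out exactly the three-line calculation displayed inside the statement and then pass to an infimum. Let me describe the logical order.

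First, I would start from the hypothesis $-a \le x \le a$ and add $a$ on both sides, which gives $0 \le x + a \le 2a$. Here the only fact used is that the order on $D$ is translation-invariant (it is an ordered vector space). This establishes the first of the two displayed inequalities.

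Next, I would invoke monotonicity of the norm to pass from $0 \le x+a \le 2a$ to the scalar inequality $\|x+a\| \le \|2a\| = 2\|a\|$. Combined with the triangle inequality in the rewriting $x = (x+a) - a$, this yields $\|x\| \le \|x+a\| + \|a\| \le 2\|a\| + \|a\| = 3\|a\|$, which is the second displayed chain.

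Finally, since the inequality $\|x\| \le 3\|a\|$ holds for every $a \in D^+$ with $-a \le x \le a$, I would take the infimum over all such $a$ on the right. Recalling the definition
\begin{align}
\cN(x) = \inf\{\|a\| : a \in D^+,\ -a \le x \le a\}
\end{align}
from \ref{obs:norm_to_be_equal_useful}, this gives $\|x\| \le 3\cN(x)$. Note that the infimum is taken over a nonempty set because $D^+$ is generating, so the bound is nontrivial.

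There is essentially no obstacle here; the statement is transparent once one notices the trick of shifting $x$ by $a$ to land in the positive cone. The only ingredients are translation invariance of the order, monotonicity of $\|\cdot\|$ (used only once), the triangle inequality, and the definition of $\cN$. The role of closedness and generatingness of $D^+$ is only to guarantee that $\cN$ is a finite-valued norm, which has already been observed in \ref{obs:norm_to_be_equal_useful}.
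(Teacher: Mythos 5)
Your proposal is correct and follows exactly the paper's own argument: shift by $a$ to get $0 \le x+a \le 2a$, apply monotonicity and the triangle inequality to obtain $\|x\| \le 3\|a\|$, and then take the infimum over all admissible $a$ to conclude $\|x\| \le 3\cN(x)$. The remarks about translation invariance of the order and nonemptiness of the infimum set (via generatingness) are implicit in the paper but add nothing new to the route.
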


We conclude:

\begin{theorem}
\label{theorem:equivalence_normal_and_norm}
Let $D$ be an ordered Banach space with closed generating cone.
The following are equivalent
\begin{enumerate}
\item There exist a norm $\|\cdot\|_1$ that is equivalent to $\|\cdot\|$ for which 
\begin{align}
\|x\|_1 =  \inf \{ \|a\|_1 : a\in D^+, - a\le x \le a\} \qquad (x\in D). 
\end{align}
\item There exists a $c>0$ such that $ \cN \ge c\|\cdot\|$. 
\item There exists a monotone norm that is equivalent to $\|\cdot\|$. 
\item $E^+$ is normal. 
\end{enumerate}
\end{theorem}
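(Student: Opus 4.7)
I would prove the cycle (1) $\Rightarrow$ (2) $\Rightarrow$ (3) $\Rightarrow$ (4) $\Rightarrow$ (1). Observation~\ref{obs:cN_equivalent_to_all_other_norms_with_equality} already gives (1) $\Rightarrow$ (2), since it states that the existence of a norm $\|\cdot\|_1$ equivalent to $\|\cdot\|$ satisfying the equality forces $\cN$ to be equivalent to $\|\cdot\|$, which in particular yields $c>0$ with $\cN \ge c\|\cdot\|$.

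For (2) $\Rightarrow$ (3), I would verify that $\cN$ itself is the desired monotone equivalent norm. It is equivalent to $\|\cdot\|$ by combining the hypothesis $\cN \ge c\|\cdot\|$ with the bound $\cN \le C\|\cdot\|$ from \ref{obs:norm_to_be_equal_useful}. For monotonicity, if $0\le x \le y$ and $a\in D^+$ satisfies $-a\le y \le a$, then automatically $-a\le 0 \le x \le y \le a$, so $\cN(x) \le \|a\|$; taking the infimum over such $a$ gives $\cN(x)\le \cN(y)$.

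The implication (3) $\Rightarrow$ (4) is immediate from the cited theorem of Aliprantis--Tourky.

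The main work is (4) $\Rightarrow$ (1). Given a normal cone, the Aliprantis--Tourky theorem supplies an equivalent monotone norm $\|\cdot\|_0$ on $D$. I would then let
\begin{align}
\cN_0(x) := \inf\{\|a\|_0 : a\in D^+,\ -a\le x\le a\}.
\end{align}
The hypothesis that $D^+$ is closed and generating, together with $\|\cdot\|_0$ being equivalent to the $C$-absolutely dominating norm $\|\cdot\|$, makes $\|\cdot\|_0$ absolutely dominating, so $\cN_0 \le C_0 \|\cdot\|_0$ for some $C_0$. The inequality in the observation preceding the theorem, applied to the monotone norm $\|\cdot\|_0$, gives $\|\cdot\|_0 \le 3\cN_0$. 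Hence $\cN_0$ is equivalent to $\|\cdot\|_0$, and in particular a norm. The previous theorem (whose proof only uses the definition of $\cN_0$ as an infimum and not any special feature of the base norm) yields the self-equality
\begin{align}
\cN_0(x) = \inf\{\cN_0(a) : a\in D^+,\ -a\le x\le a\}.
\end{align}
Setting $\|\cdot\|_1 := \cN_0$ then establishes (1).

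The main obstacle is the passage (4) $\Rightarrow$ (1): one has to pick the correct norm to exhibit, and the idea is to apply the infimum construction to a monotone equivalent norm, then verify both that the result is genuinely equivalent (not merely a seminorm dominated by $\|\cdot\|$) and that the self-equality of $\cN_0$ from the preceding theorem carries over. Once this is seen, all other implications are short.
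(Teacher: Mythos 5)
Your plan is correct and uses essentially the same ingredients as the paper, which obtains the theorem by assembling the preceding observations: \ref{obs:cN_equivalent_to_all_other_norms_with_equality} for (a)$\Rightarrow$(b), the bound $\cN\le C\|\cdot\|$, the ``monotone implies $\|\cdot\|\le 3\cN$'' observation, the self-equality theorem for $\cN$, and \cite[Theorem 2.38]{AlTo07}. Your only deviations are cosmetic but sound: you verify directly that $\cN$ itself is monotone (rather than passing through the semi-monotonicity condition of the Aliprantis--Tourky theorem), and for (d)$\Rightarrow$(a) you run the infimum construction over the equivalent monotone norm $\|\cdot\|_0$ to get $\cN_0$, which neatly avoids having to transfer the constant $3$ through the norm equivalence.
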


In the following we give an example of an ordered Banach space with closed generating cone $D$ for which none of (a) -- (d) of Theorem \ref{theorem:equivalence_normal_and_norm} holds.

\begin{example}
\label{example:absolutely_dominating_but_not_monotone_norm}
Let $D$ be $\ell^1$ with its natural norm. 
Define $T: \ell^1 \rightarrow \R^\N$ by 
\begin{align}
Tx = (x_1,x_1+ x_2, x_1+ x_2+ x_3,\dots). 
\end{align}
As $T$ is linear, $D$ is an ordered vector space under the relation $\preceq$ 
\begin{align}
x \preceq y \iff Tx \le Ty. 
\end{align}
The positive cone of $\ell^1$ is included in $D^+$, whence $D$ is directed. 
Moreover, $D^+$ is closed and so $\|\cdot\|$ is absolutely dominating. 
With $x_n = (1,-1,1,-1,\dots, \pm 1, 0,0 \dots )$ and $a=(1,0,0,\dots)$ we have $- a \le x_n \le a$ and $\|x_n\|=n$, $\|a\|=1$. 
\end{example}

\section{Appendix: Banach cover of K\"{o}the spaces}
\label{section:appendix_kothe_spaces}

In this section $(Y,\cB,\nu)$ is a complete $\sigma$-finite measure space and $M$ is the space of classes of measurable functions $Y \rightarrow \R$ as in Example \ref{example:measurable_functions_and_positive_cover}. 

\begin{lemma}
\label{lemma:special_kothe_spaces_are_banach_cover}
$\{L_{\rho_w} : w\in M^+, w>0 \mbox{ a.e.}\}$ is a Banach cover of $M$.
\end{lemma}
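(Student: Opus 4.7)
The plan is to verify the three defining properties of a Banach cover in Definition \ref{def:Banach_cover} for the family $\B = \{L_{\rho_w} : w \in M^+,\ w > 0 \text{ a.e.}\}$: that each $L_{\rho_w}$ is a Banach space whose underlying vector space sits inside $M$, that every class in $M$ lies in some $L_{\rho_w}$, and that any two members of $\B$ are contained in a common third with continuous inclusions.

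For the Banach-space property, fix $w \in M^+$ with $w > 0$ a.e. The identity map identifies $L_{\rho_w}$ (as a normed subspace of $M$) with $L^1(w\, d\nu)$ as a normed space; because $w > 0$ $\nu$-almost everywhere, the $\nu$-null sets and the $(w\,d\nu)$-null sets coincide, so the $M$-equivalence classes and the $L^1(w\,d\nu)$-equivalence classes agree. Hence $L_{\rho_w}$ inherits completeness from $L^1(w\,d\nu)$. (If one prefers, the sequence criterion recalled in Example \ref{example:measurable_functions_and_positive_cover} can be verified directly by monotone convergence: for $u_n \in L_{\rho_w}^+$ with $\sum_n \rho_w(u_n) < \infty$, $\int (\sum_n u_n)w\,d\nu = \sum_n \rho_w(u_n) < \infty$, so $\sum_n u_n \in L_{\rho_w}$.)

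For the cover property, let $f \in M$ be arbitrary. Using $\sigma$-finiteness, choose disjoint $Y_n \in \cB$ with $0 < \nu(Y_n) < \infty$ and $Y = \bigcup_n Y_n$, and set
\begin{align}
g := \sum_n 2^{-n} \nu(Y_n)^{-1} \1_{Y_n},
\end{align}
so that $g > 0$ a.e.\ and $\int g\,d\nu = 1$. Define $w := g/(1+|f|)$. Since $|f| < \infty$ a.e., $w > 0$ a.e., and
\begin{align}
\int |f|\, w\, d\nu = \int \frac{|f|}{1+|f|}\, g\, d\nu \le \int g\, d\nu = 1,
\end{align}
so $f \in L_{\rho_w}$.

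For the directed property, given $w_1, w_2 \in M^+$ with $w_i > 0$ a.e., put $w := w_1 \wedge w_2$, which is again $> 0$ a.e. Since $w \le w_i$ pointwise, $\rho_w(f) \le \rho_{w_i}(f)$ for every $f \in M$ and $i \in \{1,2\}$; hence $L_{\rho_{w_1}}, L_{\rho_{w_2}} \subset L_{\rho_w}$ and each inclusion is norm-decreasing, a fortiori continuous. I do not expect any step to be a real obstacle; the only point requiring care throughout is maintaining strict positivity a.e.\ of the weight, which is automatic for the pointwise minimum of two a.e.-positive functions and for the quotient $g/(1+|f|)$ of an a.e.-positive function by an a.e.-finite denominator.
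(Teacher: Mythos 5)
Your proposal is correct and follows essentially the same route as the paper: completeness via the isometric identification of $L_{\rho_w}$ with an $L^1$-space (the paper uses $f\mapsto fw$ into $L^1(\nu)$, you equivalently use $L^1(w\,\mathrm{d}\nu)$), the cover property via the weight $w=(1+|f|)^{-1}u$ with $u\in L^1(\nu)$, $u>0$ a.e.\ from $\sigma$-finiteness, and directedness via $w_1\wedge w_2$ with norm-decreasing inclusions. Your inclusion direction $L_{\rho_{w_1}},L_{\rho_{w_2}}\subset L_{\rho_{w_1\wedge w_2}}$ is the correct one (the paper's wording states it the other way around, evidently a slip), so nothing is missing.
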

\begin{proof}
$L_{\rho_w}$ is complete since $f\mapsto fw$ is an isometric bijection $M_w \rightarrow L^1(\nu)$. 

Let $f\in M$. We show there exists a $w\in M^+$, $w>0$ a.e., with $f\in L_{\rho_w}$. 
By the $\sigma$-finiteness of $\nu$ there is a $u\in L^1(\nu)$, $u>0$ a.e. 
Put $w = (|f|+1)^{-1}u $. Then $w\in M^+$, $w>0$ a.e., and $f\in L_{\rho_w}$ because $\int |f| w \D \nu \le \int u \D \nu <\infty$. 

If $w, v\in M^+$, $w>0, v>0$ a.e., then $w\wedge v>0$ a.e., $L_{\rho_{w\wedge v}}$ is a subset of $L_{\rho_w}$ and $L_{\rho_v}$ and $\rho_{w\wedge v} \le \rho_{w}$ on $L_{\rho_w}$ and $\rho_{w \wedge v} \le \rho_v$ on $L_{\rho_v}$. 
\end{proof}

\begin{theorem}
The complete K\"{o}the spaces form a Banach cover of $M$. 
\end{theorem}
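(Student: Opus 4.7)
My plan is to reduce the theorem to the preceding lemma by constructing, given any two complete Köthe spaces $L_{\rho_1}$ and $L_{\rho_2}$, a third complete Köthe space that contains both with continuous inclusions. The union axiom for a Banach cover is free, because Lemma \ref{lemma:special_kothe_spaces_are_banach_cover} already provides complete Köthe spaces $L_{\rho_w}$ whose union is $M$.

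For the directedness of the collection, given $L_{\rho_1}$ and $L_{\rho_2}$, I will introduce the ``sum'' function norm
\begin{align}
\rho(f) \;=\; \inf\bigl\{\rho_1(g_1)+\rho_2(g_2) : g_i\in L_{\rho_i}^+,\ |f|\le g_1+g_2\bigr\},
\end{align}
with $\rho(f)=\infty$ if no such decomposition exists. The axioms (ii)--(v) of a function norm are immediate from this definition (absolute homogeneity by scaling $g_1,g_2$; subadditivity by summing decompositions; monotonicity and $\rho(|f|)=\rho(f)$ directly). The inclusion $L_{\rho_i}\subset L_\rho$ and its continuity drop out at once: for $f\in L_{\rho_i}$ take $g_i=|f|$ and $g_{3-i}=0$, so $\rho(f)\le\rho_i(f)$.

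The delicate points are axiom (i) and completeness of $L_\rho$, and both rest on the Banach-lattice characterisation of complete Köthe spaces recalled just after Example \ref{example:measurable_functions_and_positive_cover}. For axiom (i), if $\rho(f)=0$ then I pick $g_1^n,g_2^n\ge 0$ with $|f|\le g_1^n+g_2^n$ and $\rho_1(g_1^n)+\rho_2(g_2^n)<2^{-n}$; completeness of $L_{\rho_1}$ (resp. $L_{\rho_2}$) yields $\sum_n g_i^n\in L_{\rho_i}$, so $g_i^n\to 0$ $\nu$-a.e., and therefore $|f|\le \liminf_n(g_1^n+g_2^n)=0$ a.e. For completeness of $L_\rho$, given $u_n\in L_\rho^+$ with $\sum_n \rho(u_n)<\infty$, I choose decompositions $u_n\le g_1^n+g_2^n$ with $\rho_1(g_1^n)+\rho_2(g_2^n)<\rho(u_n)+2^{-n}$; the same completeness argument produces $\sum_n g_i^n\in L_{\rho_i}$, dominating $\sum_n u_n$, which therefore belongs to $L_\rho$.

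I expect the positivity axiom for $\rho$ to be the only real obstacle: norm smallness of $g_i^n$ in a Köthe space does not in general imply pointwise smallness, so the argument genuinely needs the summability-to-membership property that characterises \emph{complete} Köthe spaces. Once that step is in place, combining the construction with Lemma \ref{lemma:special_kothe_spaces_are_banach_cover} (whose members are themselves complete Köthe spaces) delivers both clauses of Definition \ref{def:Banach_cover} for the collection of all complete Köthe spaces, concluding the proof.
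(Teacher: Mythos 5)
Your proposal is correct and follows essentially the same route as the paper: reduce to Lemma \ref{lemma:special_kothe_spaces_are_banach_cover} for the covering property, define the sum function norm $\rho(f)=\inf\{\rho_1(g)+\rho_2(h): g,h\ge 0,\ g+h\ge |f|\}$, and use the summability characterisation of complete K\"othe spaces both for the positivity axiom (via a.e.\ convergence of the $g$'s and $h$'s) and for completeness of $L_\rho$. No gaps to report.
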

\begin{proof}
Let $\rho$ be a function norm and $L_\rho$ be complete. 
If $f_n \in M^+$ for $n\in\N$ and $\sumn \rho(f_n) <\infty$, then $\sumn |f_n| \in L_\rho$, so $\sumn |f_n|<\infty$ a.e. and $f_n \rightarrow 0$ a.e. 
We will use this fact below. 

By Lemma \ref{lemma:special_kothe_spaces_are_banach_cover} it suffices to prove the following. 
Let $\rho_1, \rho_2$ be function norms, $L_{\rho_1}$ and $L_{\rho_2}$ complete. 
We make a function norm $\rho$ such that $L_\rho$ is complete and $\rho \le \rho_1$, $\rho \le \rho_2$. (Then $L_{\rho_1}, L_{\rho_2} \subset L_\rho$ and we are done.)

Define $\rho : M\rightarrow [0,\infty]$ by 
\begin{align}
\rho(f) = 
\inf \{ \rho_1(g) + \rho_2(h) : g,h\in M^+, g+h \ge |f| \}. 
\end{align}
If $\rho(f) =0$, choose $g_n, h_n$ with $g_n + h_n \ge |f|$, $\rho_1(g_n) + \rho_2(h_n) \le 2^{-n}$. 
Then (by the above), $g_n \rightarrow 0$ a.e., $h_n \rightarrow 0$ a.e. Hence, $f=0$ a.e.

It follows easily that $\rho$ is a function norm. Obviously, $\rho \le \rho_1$, $\rho \le \rho_2$. 
For the completeness of $L_\rho$: 
Let $u_1,u_2,\dots \in L_\rho^+$, $\sumn \rho(u_n) <\infty$. 
Choose $g_n, h_n \in M^+$, $g_n + h_n \ge u_n$, $\rho_1(g_n) + \rho_2(h_n) < \rho(u_n) + 2^{-n}$. 
Then $\sumn \rho_1(g_n) <\infty$, so $\sumn g_n \in L_{\rho_1}$, $ \rho_1( \sumn g_n) <\infty$. 
Similarly $\rho_2(\sumn h_n) <\infty$. 
Then $\sumn u_n \in L_\rho$. 
\end{proof}
\end{appendices}

\bibliographystyle{plain}

\end{document}